\documentclass[12pt,a4paper,reqno]{amsart}

\usepackage{amssymb,verbatim}
\usepackage{amsfonts}
\usepackage[utf8]{inputenc}
\usepackage[mathscr]{euscript}
\usepackage{enumerate}
\usepackage[hidelinks]{hyperref}
\usepackage{mathrsfs}
\usepackage[all]{xy}

\usepackage[normalem]{ulem}
\usepackage{tikz}
\usepackage{tikz-cd}
\usepackage{etoolbox}
\usepackage{enumitem}
\usepackage[export]{adjustbox}
\usepackage{mathtools}
\usetikzlibrary{arrows,arrows.meta,calc,positioning}
\usepackage[
top=3cm,left=2.5cm,right=2.5cm,bottom=2.3cm,headsep=15pt
]{geometry}

\usepackage{xcolor}
\usepackage{quiver}
\setenumerate{label=\textup{(\arabic*)},itemsep=3pt,topsep=0pt,leftmargin=1.7em}

\tikzset{
>=Stealth,node distance=1.5cm,
inner sep=3pt,font=\footnotesize,
main node/.style={circle,inner sep=2pt},
freccia/.style={->,shorten >=1pt,shorten <=1pt},
ciclo/.style={out=130, in=50, loop, distance=1.7cm, ->},
line width=0.5pt,
}

\newsavebox\Line
\sbox\Line{
\begin{tikzpicture}
\node[main node] (1) {};
\node (2) [right of=1] {};
\node (3) [right of=2] {};
\node (5) [right of=3] {};
\node (6) [right of=5] {};

\filldraw (1) circle (0.05) node[below=2pt] {$n$};
\filldraw (2) circle (0.05) node[below=2pt] {$n-1$};
\filldraw (3) circle (0.05) node[below=2pt] {$n-2$};
\filldraw (5) circle (0.05) node[below=2pt] {$2$};
\filldraw (6) circle (0.05) node[below=2pt] {$1$};
\begin{scope}[font=\footnotesize]
\path[freccia] (1) edge node[above] {$z_n$} (2)
(2) edge node[above] {$z_{n-1}$} (3)
(5) edge node[above] {$z_2$} (6);
\end{scope}

\path[dashed,shorten >=3pt,shorten <=3pt] (3) edge (5);

\end{tikzpicture}
}

\newsavebox\Star
\sbox\Star{
\begin{tikzpicture}
\node[main node] (1) at (0,0) {};

\filldraw (1) circle (0.05) node[below=2pt] {$1$};

\foreach \k/\n in {15/2,65/3,115/4,165/5,215/6,325/n} {
\node[main node] (\k) at (\k:1.5) {};
\filldraw (\k) circle (0.05);
\path[freccia] (\k) edge (1);
\path (1) -- (\k) node[pos=1.3] {$\n$};
}

\begin{scope}[font=\footnotesize]
\path[freccia] (15) edge node[above,sloped] {$y_2$} (1);
\path[freccia] (215) edge node[above,sloped] {$y_6$} (1);
\path[freccia] (325) edge node[above,sloped] {$y_n$} (1);
\end{scope}

\draw[dashed] (235:1.2) arc (235:310:1.2);

\end{tikzpicture}
}

\newsavebox\Football
\sbox\Football{
\begin{tikzpicture}

\clip (-0.3,-1.3) rectangle (3.3,1.3);
\node[main node] (1) {};
\node[main node,right of=1] (2) {};
\node[main node,right of=2] (3) {};

\filldraw (1) circle (0.05) node[left] {$2$};
\filldraw (3) circle (0.05) node[right] {$1$};

\foreach \k in {25,45,65} {
\path[freccia] (1) edge[bend left=\k] (3);
\path[freccia] (1) edge[bend right=\k] (3); }

\draw[densely dotted] ($(2)+(0,0.25)$) edge ($(2)+(0,-0.25)$);

\begin{scope}[font=\footnotesize]
\node at ($(2)+(0,1.1)$) {$x_n$} (3);
\node at ($(2)+(0,-1.1)$) {$x_2$} (3);
\end{scope}

\end{tikzpicture}
}

\newtheorem{theorem}{Theorem}[section]
\newtheorem{lemma}[theorem]{Lemma}
\newtheorem{proposition}[theorem]{Proposition}
\newtheorem{corollary}[theorem]{Corollary}
\newtheorem{conjecture}[theorem]{Conjecture}

\theoremstyle{definition}
\newtheorem{definition}[theorem]{Definition}

\theoremstyle{remark}
\newtheorem{remark}[theorem]{Remark}

\numberwithin{equation}{section}

\newcommand{\set}[1]{\left\{#1\right\}}
\newcommand{\reg}{\operatorname{reg}}
\newcommand{\id}{\operatorname{id}}

\newcommand{\scj}{\subseteq}

\newcommand{\ol}[1]{\overline{#1}}
\renewcommand{\[}{\begin{equation}}
\renewcommand{\]}{\end{equation}}
\makeatletter
\@namedef{subjclassname@2020}{%
  \textup{2020} Mathematics Subject Classification}
\makeatother

\allowdisplaybreaks

\title[Extended covariant functoriality of graph algebras]
{\vspace*{-2.5cm}The functoriality of moves on graphs and\\ the extended covariant functoriality\\ of graph algebras}

\author[G.~G.~de Castro]{Gilles G.~de Castro}
\address[G.~G.~de Castro]{Departmento de Matem\'atica, Universidade Federal de Santa Catarina, Florian\'opolis, SC, 88040-900, Brazil}
\email{gilles.castro@ufsc.br}

\author[P. M.~Hajac]{Piotr M.~Hajac}
\address[P. M.~Hajac]{Instytut Matematyczny, Polska Akademia Nauk, ul. \'Sniadeckich 8, Warszawa, 00-656 Poland}
\email{pmh@impan.pl }

\author[M.~Lowiel]{Mateusz Lowiel}
\address[M.~Lowiel]{Instytut Matematyki, Wydzia\l Matematyki Informatyki i Mechaniki, Uniwersytet Warszawski, ul. Banacha 2, Warszawa 02-097 Poland}
\email[M.~Lowiel]{m.lowiel@uw.edu.pl}

\author[E. A. ~Pacheco]{Elizabeth A. Pacheco}
\address[E. A. ~Pacheco]{Centre for Research in Mathematics and Data Science, Western Sydney University, Australia}
\email{e.pacheco@westernsydney.edu.au}

\newcommand{\Podles}[1]{{\Sigma}_{#1}}

\begin{document}
\parskip=4\lineskip
\baselineskip14pt
\begin{abstract} 
Combinatorics of graphs is a very powerful tool  to unravel various properties of graph algebras. In particular, isomorphisms between graph 
algebras are often implemented by moves between their graphs. 
In this paper, we make these combinatorial methods functorial, 
and show that collapsing an out-splitted graph to the original graph and transforming a graph to a shifted graph
can be implemented by admissible graph homomorphisms and admissible path homomorphisms, respectively. 
To include the inverses of such isomorphisms, we introduce
a new category of graphs where morphisms are given as regular homomorphisms of graph inverse semigroups. 
This new category admits a covariant functor to the category of C*-algebras and $*$-homomorphisms which extends 
the known covariant functor from the category of graphs and admissible path homomorphisms. 
\end{abstract}
\maketitle

\vspace*{-1.5cm}

{\small\tableofcontents}

\section{Introduction}
\noindent
An effort to make combinatorial methods used in studying graph algebras functorial can be traced back at least to Spielberg 
and Katsura~\cite{s-j02,k-t06}. 
More recently,
a systematic approach to the functoriality of graph algebras was proposed in~\cite{hrt20,cht21,ht-24,ht-25}. 
Herein, motivated by the problem of functorial understanding of 
the inverses of $*$-isomorphisms induced by  moves such as out-splits and shifts, 
we construct a new category of graphs using graph inverse semigroups.
The new category enjoys a covariant functor to the category of C*-algebras and $*$-homomorphisms which extends 
the covariant functor from the category of graphs and admissible path homomorphisms introduced in~\cite{ht-24}.

Graph C*-algebras are the universal enveloping C*-algebras of Leavitt path algebras~\cite{aam-17}. 
They can be viewed as groupoid convolution C*-algebras, so
groupoids  automatically play an important role in the study of graph C*-algebras~\cite{kprr-97}. 
Better still, several properties shared by graph C*-algebras
and their dense Leavitt path  subalgebras can be explained using the groupoid description of a Leavitt path algebra~\cite{cfst-14}. 
To find a groupoid describing the 
graph C*-algebra of an arbitrary graph, without demanding that all vertices be regular as was done in \cite{kprr-97}, Paterson \cite{p-a02} used 
a suitable reduction of the universal groupoid of the graph inverse semigroup. Inverse semigroups,
 born in the realm of partial bijections~\cite{w-vv52,p-gb54a}, are 
thus an important tool to unravel  C*-algebras 
associated with combinatorial objects \cite{r-e08,p-a99} such as graphs.

The main result of the paper is the introduction of a new category of directed graphs and extended path homomorphisms,
 where morphisms are defined as regular
homomorphisms of graph inverse semigroups, and its application to express functorially $*$-isomorphisms given by
moves on graphs. More precisely, by extending the domain of  the known covariant functor defined on the category of graphs and path homomorphisms
to the new category of extended path homomorphisms, we obtain a bigger image in the target category
of C*-algebras and $*$-homomorphisms.  Together with the contravariant functor from the category of graphs and graph homomorphisms
to the  category
of C*-algebras and $*$-homomorphisms, this enables us to recover moves induced formulas  that use not only standard 
generators but also their adjoints.

As a motivating application, assuming that we out-split at a regular vertex, 
we prove that the inverse of the $*$-isomorphism contravariantly induced by collapisng an
out-splitted graph to the original graph
can be implemented using the new covariant functor. Much in the same way, we show that, assuming that we 
shift a graph at a pair of vertices 
$(v,w)$ such that the regular vertex $v$ does not emit an edge that is a loop, the inverse of the $*$-isomorphism covariantly induced by this shift
can also be implemented using the new covariant functor.

The paper is organized as follows. In Section 2, we recall basic notions regarding graphs, inverse semigroups and graph inverse semigroups. In Section 3, we define the categories $\mathsf{EG}$ of graphs and graph inverse semigroups morphisms, $\mathsf{MIPG}$ of graphs and monotonic path homomorphisms and show that the former is a subcategory of the latter. Later on we define the regular counterparts to these categories $\mathsf{REG}, \mathsf{RMIPG}$, which we will use in later sections to define functors to Cohn, Leavitt and C* algebras. In Section 4, we define functors from categories defined in Section to Cohn, Leavitt and C* path algebras. In Section 5, we apply the theory developed in previous chapters to analyze how many known moves on graphs --- namely multi-out-split, shift, reduction, unital reduction, subdivision and balanced in-split -- fit into this theory. We prove that the induced isomorphisms from these moves lie in either the category of admissible path homomorphisms or regular extendedn path homomorphisms. In Section 6, we turn to concrete examples to illustrate how the explicit morphisms produced in Section 5 look like, e.g. we give explicit formulas for the extended path homomorphisms on a particular example of a balanced in-split or in the case of in-splitting Hong--Szyma\'nski graphs.

\section{Preliminaries}
\subsection{Directed graphs and finite paths}

By a (directed) \emph{graph} $E$ we mean a quadruple $(E^0,E^1,s_E,t_E)$, where \(E^0\) is the set of \emph{vertices}, $E^1$ is the set of \emph{edges}, and
$s_E\colon E^1\to E^0$, $t_E\colon E^1\to E^0$ are the \emph{source} and \emph{target} (range) maps assigning to each edge its beginning and end, respectively. Let \(v\in E^0\). The vertex \(v\) is called a \emph{sink} if \(s_E^{-1}(v)=\emptyset\),
a \emph{source} if \(t_E^{-1}(v)=\emptyset\), and \emph{regular} if it is not a sink
and \(|s_E^{-1}(v)|<\infty\).
The set of regular vertices of a graph \(E\) is denoted by \(\mathrm{reg}(E)\). 

A \emph{finite path} \(p\) in \(E\) is either a vertex or a finite sequence of edges
\(e_1\ldots e_n\) such that
\[
t_E(e_1)=s_E(e_2),\quad
t_E(e_2)=s_E(e_3),\quad
\ldots,\quad
t_E(e_{n-1})=s_E(e_n).
\]
The set of all finite paths in \(E\) is denoted by \( \mathrm{FP}(E)\). 
The \emph{length} $|p|$ of a finite path $p$  is the number of edges in the
sequence if $p$ is not a vertex, and zero if $p$ is a vertex. 
One naturally extends the source and the target maps to $ \mathrm{FP}(E)$ denoting them, respectively, 
by $s_{PE}$ and~$t_{PE}$. We introduce the following 
relation on $\mathrm{FP}(E)$:
\begin{equation}\label{popaths}
\alpha\preceq\beta\iff\exists\;\gamma\in  \mathrm{FP}(E)\colon \beta=\alpha\gamma.
\end{equation}
We say that two paths $\alpha$ and $\beta$ are
\begin{itemize} 
\item {\em comparable} if $\alpha\preceq\beta$ or $\beta \preceq\alpha$, 
\item \emph{orthogonal} if they are not comparable.
\end{itemize}

The {\em extended graph} $\bar{E}:=(\bar{E}^0, \bar{E}^1,s_{\bar{E}},t_{\bar{E}})$
of the graph $E$ is given as follows:
\begin{gather}
\bar{E}^0:=E^0,\quad \bar{E}^1:=E^1\sqcup (E^1)^*,\quad (E^1)^*:=\{e^*~|~e\in E^1\},
\nonumber\\
\forall\; e\in E^1:\quad s_{\bar{E}}(e):=s_E(e),\quad t_{\bar{E}}(e):=t_E(e),
\nonumber\\
\forall\; e^*\in (E^1)^*:\quad s_{\bar{E}}(e^*):=t_E(e),\quad t_{\bar{E}}(e^*):=s_E(e).
\end{gather}
Given a path $p=e_1e_2\ldots e_n\in  \mathrm{FP}(E)$, we put $p^*:=e_n^*\ldots e_1^*$ for the corresponding path in~$ \mathrm{FP}(\overline{E})$. 

\subsection{Graph inverse semigroups}
\begin{definition}[\cite{w-vv52,p-gb54a}]
An \emph{inverse semigroup} $S$ is a semigroup such that for every $s\in S$ there exists a unique $s^*\in S$ such that $ss^*s=s$ and $s^*ss^*=s^*$.
\end{definition}
\noindent
Let us begin by recalling some facts about inverse semigroups, which can be found in~\cite{m-l98}. For starters, inverse semigroups enjoy a natural partial 
order defined as follows: 
for $s,t\in S$, $s\leq t$ if $ss^*t=s$. Next, an element $s\in S$ is idempotent if and only if $s=tt^*$ for some $t\in S$. 
Moreover, one can show that the product of idempotents is again an 
idempotent. In particular, if $s\leq t$ and $t$ is an idempotent, then $s$ is also an idempotent.

The graph inverse semigroup was first introduced in \cite{AshHall} for directed graphs without multiple edges between vertices,
 and later was reintroduced in \cite{p-a02} in the context of graph C*-algebras.
\begin{definition}[\cite{AshHall,p-a02}]
Let $E$ be a graph. The \emph{graph inverse semigroup} $S(E)$ is the
set
\[
S(E):=\{(\alpha,\beta)\in \mathrm{FP}(E)\times \mathrm{FP}(E)\mid t_E(\alpha)=t_E(\beta)\}\cup\{0\}
\]
equipped with the multiplication  defined by
\[
(\alpha,\beta)(\gamma,\delta):=\begin{cases}
	(\alpha\gamma',\delta) & \text{if }\gamma=\beta\gamma' \\
	(\alpha,\delta\beta') & \text{if }\beta=\gamma\beta' \\
	0 & \text{otherwise,}
\end{cases}
\]
and the requirement that $0$ times anything is~$0$.
\end{definition}
\noindent
By \cite[Proposition~3.2]{p-a02}, $S(E)$ has the universal property of being a semigroup with zero generated by
 $E^0\cup E^1\cup (E^1)^*$ and satisfying the relations:
\begin{enumerate}
	\item (V) $uu'=\delta_{u,u'}u$ for all $u,u'\in E^0$,
	\item (E1) $s_E(e)e=e=et_E(e)$ for all $e\in E^1$,
	\item (E2) $t_E(e)e^*=e^*=e^*s_E(e)$ for all $e\in E^1$,
	\item (CK1) $e^*e'=\delta_{e,e'}t_E(e)$ for all $e,e'\in E^1$.
\end{enumerate}
Here we identify a vertex $v$ with the pair $(v,v)$, an edge $e$ with the pair $(e,t_E(e))$, and an edge $e^*$ with the pair $(t_E(e), e)$.
 We observe that the set of non-zero idempotents of $S(E)$ is given by $\{(\alpha,\alpha)\mid \alpha\in  \mathrm{FP}(E)\}$.

Now, we adapt the notion of a cover of an inverse-semigroup idempotent given in \cite[Definition~11.5]{r-e08} to graph inverse semigroups. As idempotents
in graph inverse semigroups are always of the form $(\alpha,\alpha)$,  we effectively obtain:
\begin{definition}
	Let $E$ be a graph and $\alpha\in {\rm FP}(E)$. A finite set of paths $\{\alpha_i\}_{i=1}^k$ is said to be a \emph{cover} of $\alpha$ if $\alpha\preceq \alpha_i$ 
	for all $i\in \{1,\ldots,k\}$ and for every path $\beta$ such that $\alpha\preceq\beta$, there exists $i\in \{1,\ldots,k\}$ such that $\alpha_i$ and $\beta$ 
	are comparable. A cover is called \emph{orthogonal} when all its elements are pairwise orthogonal.
\end{definition}

\section{Extended path homomorphisms}
\noindent
\begin{definition}\label{semi}
	Let $E,F$ be graphs, an \emph{extended path homomorphism} from $E$ to $F$ is a semigroup homomorphism $\theta:S(E)\to S(F)$ that preserves $0$.
\end{definition}

Note that,
	by the universality of the graph inverse semigroup, an extended path homomorphism $\theta:S(E)\to S(F)$ is uniquely defined by a pair of maps $
	\theta^0:E^0\to S(F)$ and $\theta^1:E^1\to S(F)$ such that:
	\begin{enumerate}
		\item $\theta^0(u)\theta^0(u')=\delta_{u,u'}\theta^0(u)$ for all $u,u'\in V$,
		\item $\theta^0(s_E(e))\theta^1(e)=\theta^1(e)=\theta^1(e)\theta^0(t_E(e))$ for all $e\in E^1$,
		\item $\theta^0(t_E(e))\theta^1(e)^* = \theta^1(e)^* = \theta^1(e)^*\theta^0(s_E(e))$ for all $e\in E^1$,
		\item $\theta^1(e)^*\theta^1(e')=\delta{e,e'}\theta^0(t_E(e))$ for all $e,e'\in E^1$.
	\end{enumerate}
	From $\theta$ to the pair $(\theta^0,\theta^1)$, we just take the restrictions to $E^0$ and $E^1$. From the pair $(\theta^0,\theta^1)$, 
	we define $\varphi_\theta:S(E)\to S(F)$ in the obvious way for elements $(x,x')\in S(E)$, where $x,x'\in \mathrm{FP}(E)$.

 The category whose objects are all graphs and morphisms are given all  extended path homomorphisms we denote by~$\mathsf{EG}$. Here, the composition 
 of morphisms is simply the composition of maps, and identities are the identity maps on each graph inverse semigroup.
 
 \subsection{Monotonicity}
\begin{definition}
Let $E$ and $F$ be graphs.
A {\em path homomorphism} from $E$ to $F$  is a~map \mbox{$f\colon  \mathrm{FP}(E)\to  \mathrm{FP}(F)$} satisfying:
\begin{enumerate}
\item
$f(E^0)\subseteq F^0$,
\item
$s_{F}\circ f=f\circ s_{PE}\,,\quad t_{F}\circ f=f\circ t_{E}\,$,
\item
$\forall\;p,q\in  \mathrm{FP}(E)\text{ such that } t_{E}(p)=s_{E}(q)\colon f(pq)=f(p)f(q)$.
\end{enumerate}
We say that a path homomorphisms is {\em monotonic} if
\begin{equation}\label{mcondition}
f(e)\preceq f(e')\quad\Longrightarrow\quad e=e'.
\end{equation}
We denote by \(\mathsf{MIPG}\) the category whose objects are directed graphs and whose
morphisms are path homomorphisms that are injective on vertices and monotonic~\cite{ht-24}.
\end{definition}
\begin{lemma}\label{lem:monotone.paths}
	Let $f:E\to F$ be a morphism in the category $\mathsf{MIPG}$. 
	If $\alpha,\beta\in  \mathrm{FP}(E)$ have at least length 1, then $f(\alpha)\preceq f(\beta)$ implies that $\alpha$ and $\beta$ are comparable.
\end{lemma}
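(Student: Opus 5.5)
Write $\alpha=e_1\cdots e_m$ and $\beta=e'_1\cdots e'_n$ with $m,n\geq 1$. Suppose $f(\alpha)\preceq f(\beta)$. We argue by induction on $\min(m,n)$, peeling off one edge at a time from the front, and show that the first $\min(m,n)$ edges of $\alpha$ and $\beta$ coincide. This is exactly comparability.

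Since $f$ is a path homomorphism, $f(\alpha)=f(e_1)\cdots f(e_m)$ and $f(\beta)=f(e'_1)\cdots f(e'_n)$. The hypothesis $f(\alpha)\preceq f(\beta)$ says $f(\beta)=f(\alpha)\gamma$ for some path $\gamma$. Thus both $f(e_1)$ and $f(e'_1)$ are initial segments of the single path $f(\beta)$.

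Two initial segments of the same path are comparable, since one is a prefix of the other according to length. Hence $f(e_1)\preceq f(e'_1)$ or $f(e'_1)\preceq f(e_1)$. In either case the monotonicity condition \eqref{mcondition}, applied in the appropriate order, gives $e_1=e'_1$.

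If $m=1$ or $n=1$, we are now done: $\alpha\preceq\beta$ or $\beta\preceq\alpha$. Otherwise write $\alpha=e_1\alpha'$ and $\beta=e_1\beta'$ with $|\alpha'|,|\beta'|\geq1$. Then
\[
f(\alpha)=f(e_1)f(\alpha'),\qquad f(\beta)=f(e_1)f(\beta').
\]
Cancelling the common prefix $f(e_1)$ is legitimate in the free path structure, because concatenation of edge sequences is left-cancellative. This yields $f(\alpha')\preceq f(\beta')$, and the induction hypothesis makes $\alpha'$ and $\beta'$ comparable, hence $\alpha$ and $\beta$ comparable.

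The main obstacle is a subtlety about lengths. If $f(e)$ could be a vertex (length $0$), then $f(e)\preceq f(e')$ would hold for every edge $e'$ starting at the image vertex, and the prefix argument would be vacuous. Monotonicity rules this out whenever the source vertex emits a second edge, but it need not rule it out in general.

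I will handle this directly: comparability of the prefixes $f(e_1)$ and $f(e'_1)$ holds regardless of their lengths, and \eqref{mcondition} only needs one to be a prefix of the other. So the argument goes through unchanged. The only care needed is in the cancellation step, where left-cancellation must also hold when $f(e_1)$ is a vertex; this is trivially true. Injectivity on vertices is not needed here.
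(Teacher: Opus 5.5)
Your proposal is correct and follows essentially the same route as the paper: match the first edges by applying monotonicity to the comparable prefixes $f(e_1)$ and $f(e'_1)$, cancel $f(e_1)$, and iterate (you phrase the iteration as an induction on $\min(m,n)$). Your added justifications make explicit what the paper leaves implicit: that both prefixes lie inside $f(\beta)$, that left cancellation holds even when $f(e_1)$ is a vertex, and that injectivity on vertices is not used.
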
 

\begin{proof}
	Assume that $\alpha=e_1\ldots e_m$ and $\beta=x_1\ldots x_n\,$,
where $e_1,\ldots, e_m,x_1,\ldots, x_n$ are edges.
Then
$f(e_1)\ldots f(e_m)\preceq f(x_1)\ldots f(x_n)$ implies that either $f(e_1)\preceq f(x_1)$
or $f(x_1)\preceq f(e_1)$, whence $e_1=x_1$ by monotonicity of $f$. It follows that 
$f(e_2)\ldots f(e_m)\preceq f(x_2)\ldots f(x_n)$. We can iterate the above argument to conclude that
$e_i=x_i$ for all $i\in\{1,\ldots,\mathrm{min}\{m,n\}\}$. Therefore, $\alpha\preceq \beta$ or 
$\beta\preceq \alpha$.
\end{proof}
 
\begin{proposition}
Let $f$ be a morphism in the category $\mathsf{MIPG}$. Then the
formulas
\begin{equation}\label{inducedext}
\varphi_f(0)=0\quad\text{and}\quad \varphi_f(\alpha,\beta):=(f(\alpha),f(\beta))
\end{equation}
render $\mathsf{MIPG}$ a subcategory of $\mathsf{EG}$ via the assignment $f\mapsto \varphi_f$.
\end{proposition}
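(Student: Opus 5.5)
To prove that $f\mapsto\varphi_f$ exhibits $\mathsf{MIPG}$ as a subcategory of $\mathsf{EG}$, I will check three things. First, $\varphi_f$ is a well-defined map $S(E)\to S(F)$ preserving $0$. Second, it is a semigroup homomorphism. Third, the assignment is functorial and injective on morphisms, so that $\mathsf{MIPG}$ really sits inside $\mathsf{EG}$.

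Well-definedness is immediate. If $t_E(\alpha)=t_E(\beta)$, then by property (2) of path homomorphisms $t_F(f(\alpha))=f(t_E(\alpha))=f(t_E(\beta))=t_F(f(\beta))$, so $(f(\alpha),f(\beta))\in S(F)$.

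The main step is multiplicativity: $\varphi_f((\alpha,\beta)(\gamma,\delta))=(f(\alpha),f(\beta))(f(\gamma),f(\delta))$. I split into the cases of the multiplication rule.

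If $\gamma=\beta\gamma'$, then $f(\gamma)=f(\beta)f(\gamma')$ by property (3). Hence the right-hand product equals $(f(\alpha)f(\gamma'),f(\delta))=(f(\alpha\gamma'),f(\delta))$, which is the left-hand side.

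The case $\beta=\gamma\beta'$ is symmetric.

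Vertex paths need a little care: when $\beta$ is a vertex, $\beta\gamma'$ means $\gamma$ with $s(\gamma)=\beta$, and $f$ preserves sources. When $\gamma$ and $\beta$ are both vertices, injectivity of $f$ on vertices ensures that $f(\beta)\neq f(\gamma)$ whenever $\beta\ne\gamma$.

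The remaining case is the hard one: $\beta$ and $\gamma$ are orthogonal, so the left side is $0$, and I must show the right side is also $0$, i.e.\ that $f(\beta)$ and $f(\gamma)$ are orthogonal. I argue by contrapositive. Suppose, say, $f(\gamma)\preceq f(\beta)$, and consider three subcases.
\begin{enumerate}
\item Both $\beta$ and $\gamma$ have length at least $1$. Then Lemma~\ref{lem:monotone.paths} gives that $\beta$ and $\gamma$ are comparable, a contradiction.
\item $\gamma$ is a vertex. Then $f(\gamma)\preceq f(\beta)$ means $f(\gamma)=s_F(f(\beta))=f(s_E(\beta))$. Injectivity on vertices gives $\gamma=s_E(\beta)$, so $\gamma\preceq\beta$, a contradiction.
\item $\beta$ is a vertex and $\gamma$ is not. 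Then $f(\gamma)\preceq f(\beta)$ forces $f(\gamma)$ to be a vertex. I will handle this by noting that $f(\gamma)$ then equals $f(\beta)$ as a path of length $0$. This would follow if images of edges have positive length. If that is not automatic, I instead use $s_F(f(\gamma))=f(s_E(\gamma))$ and injectivity to get $s_E(\gamma)=\beta$, hence $\beta\preceq\gamma$, again a contradiction.
\end{enumerate}
Here I must also make sure monotonicity excludes edges being mapped to vertices. Indeed, if $f(e)$ were the vertex $f(s(e))$, then $f(e)\preceq f(e')$ for any edge $e'$ emitted from $s(e)$, and monotonicity forces all such $e'$ to equal $e$. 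This boundary case is what I expect to be the fiddliest part of the argument.

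Finally, functoriality. We have $\varphi_{g\circ f}(\alpha,\beta)=(g(f(\alpha)),g(f(\beta)))=\varphi_g\varphi_f(\alpha,\beta)$, and $\varphi_{\id}=\id$. Faithfulness holds because $f$ is recovered from $\varphi_f$ via $\varphi_f(\alpha,t(\alpha))=(f(\alpha),t(f(\alpha)))$.
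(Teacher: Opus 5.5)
Your proposal is correct and follows the paper's proof essentially step by step. It uses the same case split along the multiplication rule, settles the vertex cases by source-preservation plus injectivity on vertices, handles the positive-length orthogonal case by Lemma~\ref{lem:monotone.paths}, and reads off faithfulness from $\varphi_f(\alpha,t_E(\alpha))$.

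One side remark is wrong, though harmless: monotonicity does not exclude edges being sent to vertices. For example, a lone loop at $v$ may be mapped to $f(v)$, which is precisely the option $f(s_E^{-1}(v))=f(v)$ allowed in Definition~\ref{regularity}(2). Your fallback in the third subcase, via $s_F(f(\gamma))=f(s_E(\gamma))$ and injectivity on vertices, does not rely on that remark.
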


\begin{proof}
First, note that $\varphi_f$ is well defined because $f$ respects the ranges of paths. Next, we show that $\varphi_f$ is a morphism in $\mathsf{EG}$. 
It maps $0$ to $0$ by definition, so it suffices to check that
\begin{equation}\label{homproof}
\varphi_f((\alpha,\beta)(\gamma,\delta))=\varphi_f(\alpha,\beta)\varphi_f(\gamma,\delta).
\end{equation}
The left-hand side of \eqref{homproof} reads
\begin{equation}
\varphi_f((\alpha,\beta)(\gamma,\delta))
=\begin{cases}\varphi_f(\alpha\gamma',\delta) & \text{if } \gamma=\beta\gamma'\\ \varphi_f(\alpha,\delta\beta') & \text{if } \beta=\gamma\beta'\\ 0 & 
\text{otherwise}\end{cases}=\begin{cases} (f(\alpha)f(\gamma'),f(\delta)) & \text{if }\gamma=\beta\gamma'\\
(f(\alpha),f(\delta)f(\beta')) & \text{if } \beta=\gamma\beta'\\ 0 & \text{otherwise}.\end{cases}
\end{equation}
Assume that $\gamma=\beta\gamma'$. Then, by the path homomorphism property, $f(\gamma)=f(\beta)f(\gamma')$. 
In this case, the right-hand side of~\eqref{homproof} equals
\begin{equation}
\varphi_f(\alpha,\beta)\varphi_f(\gamma,\delta)=(f(\alpha),f(\beta))(f(\gamma),f(\delta))=(f(\alpha)f(\gamma'),f(\delta))
\end{equation}
as needed. In the same way, if $\beta=\gamma\beta'$, we obtain that
\begin{equation}
\varphi_f(\alpha,\beta)\varphi_f(\gamma,\delta)=(f(\alpha),f(\beta))(f(\gamma),f(\delta))=(f(\alpha),f(\delta)f(\beta')).
\end{equation}

Finally, assume that $(\alpha,\beta)(\gamma,\delta)=0$, i.e. that neither $\gamma$ extends $\beta$ nor $\beta$ extends~$\gamma$. 
If $\beta=v$ is a vertex, then $\beta$ extends $\gamma$ if and only if $\gamma=v$.
Moreover, $\gamma$ extends $\beta$ if and only if $s_E(\gamma)=v$.
Hence, $(\alpha,v)(\gamma,\delta)=0$ if and only if $s_E(\gamma)\neq v$. Hence, we assume that $s_E(\gamma)\neq v$. Using the fact that $f$ respects the 
beginnings of paths and is injective on vertices, we conclude that 
\begin{equation}
s_F(f(\gamma))=f(s_E(\gamma))\neq f(v),
\end{equation}
which in turn implies that $(f(\alpha),f(v))(f(\gamma),f(\delta))=0$. If $\gamma$ is a vertex, a similar argument works. Next, assume that $\gamma$ and $\beta$ are 
paths of length $\geq 1$ and that $(\alpha,\beta)(\gamma,\delta)=0$. If $(f(\alpha),f(\beta))(f(\gamma),f(\delta))\neq 0$, then $f(\beta)$ and $f(\gamma)$ are 
comparable. By Lemma~\ref{lem:monotone.paths}, $\beta$ and $\gamma$ are comparable, which contradicts the fact that $(\alpha,\beta)(\gamma,\delta)=0$. The 
proof of~\eqref{homproof} is complete.


Given any graph $E$, we obtain that $\varphi_{\id}={\id}_{S(E)}$. Furthermore, by definition,
\begin{equation}
\varphi_{f\circ g}(\alpha,\beta)=(f(g(\alpha)),f(g(\beta)))=\varphi_f(g(\alpha),g(\beta))=\varphi_f(\varphi_g(\alpha,\beta)).
\end{equation} 
Therefore, we have a functor $F$ from $\mathsf{MIPG}$ to $\mathsf{EG}$ given by the identity on the objects and the assignment of morphisms 
$f\mapsto\varphi_f$. Hence, our functor is injective on the objects. To show that the image of $F$ gives a subcategory of $\mathsf{EG}$, it suffices to show that $F$ 
is faithful, namely that the assignment $f\mapsto \varphi_f$ is injective. Let $f\neq g$ be two path homomorphisms from $E$ to $F$ in $\mathsf{MIPG}$, so there 
exists $\alpha\in  \mathrm{FP}(E)$ such that $f(\alpha)\neq g(\alpha)$. Note that $(\alpha,r_E(\alpha))\in S(E)$ and
\begin{equation}
\varphi_f(\alpha,r_E(\alpha))=(f(\alpha),f(r_E(\alpha)))\neq (g(\alpha),g(r_E(\alpha)))=\varphi_g(\alpha,r_E(\alpha)).
\end{equation}
Hence $\varphi_f\neq \varphi_g$.
\end{proof}

\subsection{Regularity}
For starters, let us recall the concept of regularity for path homomorphisms of graphs.
\begin{definition}[\cite{ht-24}]\label{regularity}
Let $f\colon  \mathrm{FP}(E)\to  \mathrm{FP}(F)$ be a path homomorphism and let 
\begin{equation*}
{\rm reg}_0(E):=\{v\in{\rm reg}(E)~|~s_E^{-1}(v)=\{e\}\text{ and } t_E(e)=v\}
\end{equation*} 
denote the set of {\em 0-regular vertices}. We call $f$ \emph{regular} whenever the following conditions hold:
\vspace*{-2mm}\begin{enumerate}
\item
For any $v\in\mathrm{reg}(E)\setminus {\rm reg}_0(E)$, we require that:
\begin{enumerate}
\item
$f$ restricted to $s_E^{-1}(v)$ be injective;
\item
$p\in f(s_E^{-1}(v))$ if and only if
\begin{enumerate}
\item
$\exists\;n\in\mathbb{N}\setminus\{0\}\colon p=e_1\ldots e_n,\, e_i\in F^1 \text{ for all } i$,
\item
$pq\in f(s_E^{-1}(v))\,\Rightarrow\, q=t_{PE}(p)$,
\item
$\forall i\in\{1,\ldots,n\}, e\in s_F^{-1}(s_F(e_i))\,\exists\,r\in  \mathrm{FP}(F):e_1\ldots e_{i-1}er\in f(s_E^{-1}(v))$.
\end{enumerate}
\end{enumerate}
\item 
For any $v\in {\rm reg}_0(E)$, either the above condition holds or $f(s^{-1}_E(v))=f(v)$.
\end{enumerate}
\end{definition}
Restricting morphisms in the category $\mathsf{MIPG}$ to the regular ones yields a subcategory of $\mathsf{MIPG}$~\cite{ht-24}. We denote this subcategory
by~$\mathsf{RMIPG}$. Now our goal is to construct an appropriate subcategory of $\mathsf{EG}$ extending the category $\mathsf{RMIPG}$ to the realm
of extended path homomorphisms. To this end, inspired by the proof of \cite[Proposition~6.9(2)]{cdh25}, first we show the following:
\begin{lemma}\label{lem:regular.vs.cover}
	Let $f\colon  \mathrm{FP}(E)\to  \mathrm{FP}(F)$ be a path homomorphism of graphs and $v\in \mathrm{reg}(E)$. Then the conditions (1)(a) and (1)(b) in 
	Definition~\ref{regularity} are satisfied for $v$ if and only if the set $f(s_E^{-1}(v))$ is an orthogonal cover of $f(v)$ consisting of 
	 positive-length paths and $|f(s_E^{-1}(v))|=|s_E^{-1}(v)|$.
\end{lemma}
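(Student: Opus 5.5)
We prove the two implications separately, after two preliminary observations. Write $P:=f(s_E^{-1}(v))$ and $u:=f(v)\in F^0$. Since $f$ is a path homomorphism, every $p\in P$ satisfies $s_F(p)=u$. Hence $u\preceq p$ for all $p\in P$, so the first requirement in the definition of a cover is automatic. Also, $s_E^{-1}(v)$ is finite because $v$ is regular, so $P$ is finite.

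\emph{Forward direction.} Assume (1)(a) and (1)(b). Condition (1)(a) gives $|P|=|s_E^{-1}(v)|$. Condition (i) gives that every element of $P$ has positive length.

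For orthogonality, take $p,p'\in P$ with $p\preceq p'$, so $p'=pq$. Condition (ii) applied to $p$ forces $q=t(p)$, hence $p=p'$. Thus distinct elements of $P$ are orthogonal.

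For the covering property, let $\beta$ be a path with $u\preceq\beta$. If $\beta=u$, then $\beta$ is comparable to any element of $P$, and $P\neq\emptyset$ since $v$ is not a sink. If $\beta=g_1\ldots g_m$ with $m\ge1$, we argue by induction on $j$. We look for the largest $j\le m$ such that some $p\in P$ has $g_1\ldots g_j\preceq p$. Such a $j\ge1$ exists by applying condition (iii) to any $p\in P$ with $i=1$ and $e=g_1$, using $s_F(g_1)=u=s_F(e_1)$.

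Now fix such a $p=e_1\ldots e_n$ with $g_1\ldots g_j\preceq p$. If $j=m$, then $\beta\preceq p$ and we are done. If $j<m$ and $n=j$, then $p\preceq\beta$ and we are done. Otherwise $n>j$, so $e_{j+1}$ exists and $s_F(g_{j+1})=t(g_j)=s_F(e_{j+1})$. Condition (iii) with $i=j+1$ and $e=g_{j+1}$ then yields $g_1\ldots g_{j+1}r\in P$ for some path $r$, contradicting the maximality of $j$.

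\emph{Backward direction.} Assume $P$ is an orthogonal cover of $u$ consisting of positive-length paths, with $|P|=|s_E^{-1}(v)|$. Since $f$ maps $s_E^{-1}(v)$ onto $P$ and the two sets have the same finite cardinality, $f$ is injective there, which is (1)(a).

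For (1)(b), first take $p\in P$. Then (i) holds by positivity of length, and (ii) holds because $pq\in P$ together with orthogonality forces $pq=p$, so $q=t(p)$. For (iii), take $i\le n$ and $e\in s_F^{-1}(s_F(e_i))$, and set $\beta:=e_1\ldots e_{i-1}e$. Then $u\preceq\beta$, so by the cover property some $p'\in P$ is comparable with $\beta$. If $\beta\preceq p'$, we obtain the required $r$. If $p'\prec\beta$, then $p'\preceq e_1\ldots e_{i-1}\preceq p$, since $|p'|\ge1$ and $p'$ is a proper prefix of $\beta$. Orthogonality gives $p'=p$, which is impossible because $|p'|\le i-1<n=|p|$.

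Conversely, take $p=e_1\ldots e_n$ satisfying (i)–(iii); we must show $p\in P$. Applying the cover property to $p$, which starts at $u$ by (iii) since $P$ is nonempty with all elements starting at $u$, some $p'\in P$ is comparable with $p$. If $p\preceq p'$, then (ii) gives $p=p'$. If instead $p'\prec p$, write $p'=e_1\ldots e_k$ with $1\le k<n$. By (iii) with $i=k+1$ and $e=e_{k+1}$, we get $e_1\ldots e_{k+1}r\in P$, which is a proper extension of $p'$. This contradicts orthogonality.

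\emph{Expected obstacle.} The main subtlety is in the last step: we need $s_F(p)=u$ to invoke the cover property. The role of (iii) in securing this, together with the degenerate cases involving vertices, needs careful checking.
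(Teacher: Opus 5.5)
Your proposal is correct and follows essentially the same route as the paper's proof. The covering property comes from repeated use of (1)(b)(iii) along the edges of $\beta$: you phrase it through a maximal prefix $g_1\ldots g_j$, where the paper builds nested sets $X_1\supseteq X_2\supseteq\cdots$. The rest matches the paper too: orthogonality from (ii), injectivity from the cardinality count, and the converse half of (1)(b) by comparing $p$ with a cover element and extending via (iii). Your uniform treatment of (iii) in the backward direction (a proper prefix $p'$ of $e_1\ldots e_{i-1}e$ would be a shorter prefix of $p$) cleanly replaces the paper's case split $e=e_i$ versus $e\neq e_i$.
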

\begin{proof}
	Assume first that the conditions (1)(a) and (1)(b) in Definition~\ref{regularity} hold for $v\in\mathrm{reg}(E)$. 
	 It follows from the condition (1)(b)(i) that $|f(x)|>0$ for any $x\in s_E^{-1}(v)$.
	Now, let $\gamma\in {\rm FP}(F)$ be a path such that $s_{PF}(\gamma)=f(v)$. 
	If $\gamma$ is a vertex, then $\gamma=f(v)$, which is comparable with any $f(x)$ 
	for $x\in s_E^{-1}(v)$ because $s_{PF}(f(x))=f(s_E(x))=f(v)$. Assume then that $\gamma:=\gamma_1\ldots\gamma_k$,  $\gamma_i\in F^1$, $i\in\{1,\ldots,k\}$. 
	If $\gamma_1=f(x)$ for some 
	$x\in s_E^{-1}(v)$, then $f(x)\preceq \gamma$, and we are done. Otherwise,  consider the set 
$$
X_1:=\{x\in s_E^{-1}(v)\mid \gamma_1\preceq f(x)\}.
$$
 This set is nonempty by the condition (1)(b)(iii) because $s_E(\gamma_1)=f(v)$. 
	If $\gamma_1\gamma_2=f(x)$ for some $x\in s_E^{-1}(v)$, then $f(x)\preceq \gamma$, and again we are done. Otherwise, consider the set
$$
X_2=\{x\in X_1\mid \gamma_1\gamma_2\preceq f(x)\}.
$$
Again, this set is nonempty by the condition~(1)(b)(iii) because, for every $x\in X_1$, the source of the second edge of $f(x)$ coincides with 
$t_F(\gamma_1)=s_F(\gamma_2)$. Iterating this procedure, we 
	either find $1\leq l\leq k$ such that $f(x)=\gamma_1\cdots \gamma_l$ for some $x\in s_E^{-1}(v)$, so  $f(x)\preceq \gamma$, or continue using 
	the procedure till we find 
	$x\in s_E^{-1}(v)$ such that $\gamma=\gamma_1\cdots \gamma_k\preceq f(x)$. Hence, $\gamma\sim f(x)$ for some $x\in s_E^{-1}(v)$. 
	Finally,
	to prove that the cover is orthogonal, let $x,x'\in s_E^{-1}(v)$ be such that $f(x)$ and $f(x')$ are comparable. Then it follows from
	the condition~(1)(b)(ii) of 
	Definition~\ref{regularity} that $f(x)=f(x')$. Consequently,  $x=x'$ by the condition~(1)(a), so the cover is orthogonal  and $|f(s_E^{-1}(v))|=|s_E^{-1}(v)|$.
	
	Now, assume that the set $f(s_E^{-1}(v))$ is an orthogonal cover of $f(v)$ consisting  positive-length  paths and that $|f(s_E^{-1}(v))|=|s_E^{-1}(v)|$. 
	Then the condition~(1)(a) is automatically satisfied. Furthermore, it is immediate that $f(x)$, $x\in s_E^{-1}(v)$,
	 satisfies the condition~(1)(b)(i). Next, the condition~(1)(b)(ii) 
	follows from the orthogonality of the cover. To see that $f(x)$ satisfies the condition~(1)(b)(iii), 
	write $f(x):=e_1\ldots e_n$, $e_i\in F^1$, $i\in\{1,\ldots,n\}$ and take $e\in s_F^{-1}(s_F(e_i))$. If $e=e_i$, 
	by choosing $r:=e_{i+1}\ldots e_n$ (or $r=t_E(e_n)$ if $i=n$), we obtain that $e_1\ldots e_{i-1}er=f(x)$. Otherwise, there exists $x'\in s_E^{-1}(v)\setminus\{x\}$ 
	such that $e_1\ldots e_{i-1}e$ and $f(x')$ are comparable. If $e_1\ldots e_{i-1}e\preceq f(x')$, we are done. Otherwise, by the orthogonality of the cover,
	 the only possibility for $f(x')\preceq e_1\ldots e_{i-1}e$ is the equality $f(x')=e_1\ldots e_{i-1}e$, which yields the equality
	$f(x')=e_1\ldots e_{i-1}er$ for  $r:=t_F(e)$. 
	
	Finally, let $p\in {\rm FP(E)}$ be such that conditions (i)-(iii) of (1)(b) are satisfied. It follows from the condition~(1)(b)(iii) applied to $i=1$
	that $s_{PF}(p)=f(v)$. Now, by the 
	definition of a cover, there exists $x\in s_E^{-1}(v)$ such that $f(x)$ and $p$ are comparable. If $p\preceq f(x)$, then $p=f(x)$ by the condition~(1)(b)(ii). 
	If $f(x)\preceq p$ and $p\neq f(x)$, then by applying the condition (1)(iii)(b) to  $i=n$ and $e=e_n$, we find $x'\in s_E^{-1}(v)$ such that $p\preceq f(x')$. This 
	implies that $f(x)\preceq f(x')$ and $f(x)\neq f(x')$, which contradicts the orthogonality of the cover. Therefore, $p=f(x)$ as needed.
\end{proof}

\begin{definition}\label{def:regularity}
	Given an extended path homomorphism $\varphi:S(E)\to S(F)$, we say that it is \emph{regular} whenever $u\in\reg(E)$, $t\in S(F)\setminus \{0\}$ and $t\leq 
	\varphi(u,u)$ implies there exists $e\in s_E^{-1}(u)$ such that $t\varphi(e,e)\neq 0$.
\end{definition}

	Since any homomorphism of semigroups preserves idempotents and any idempotent in a graph inverse semigroup is of the form
	$(\beta,\beta)$ for some path $\beta$,  we conclude that, for any extended path homomorphism $\varphi:S(E)\to S(F)$ and any $u\in E^0$,
	we have $\varphi(u,u)=(\beta,\beta)$. The condition $0\neq t\leq \varphi(u,u)$ then implies that $t=(\beta\gamma,\beta\gamma)$ for some 
	$\gamma\in  \mathrm{FP}(F)$. Furthermore, as $(e,e)\leq (u,u)$ for any $e\in s_E^{-1}(u)$ and $\varphi$  preserves the partial order, we infer that 
	$\varphi(e,e)=(\beta\delta_e,\beta\delta_e)$ for some $\delta_e\in  \mathrm{FP}(F)$. We thus arrive at an alternative formulation of the regularity condition:
\begin{proposition}\label{rmk:regularity} 
An extended path homomorphism $\varphi:S(E)\to S(F)$ is regular if and only if, for any $u\in\reg(E)$, the set
\[\label{cover}
\{\delta_e\in s_{PF}^{-1}(t_F(\beta))\mid e\in s_E^{-1}(u),\;\varphi(u,u)=(\beta,\beta),\; \varphi(e,e)=(\beta\delta_e,\beta\delta_e)\}
\]
  is a cover of~$ t_F(\beta)$.
\end{proposition}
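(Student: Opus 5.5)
We unwind Definition~\ref{def:regularity} using the normal forms recorded just before the statement, and translate each side into a statement about comparability of paths in $F$. Fix $u\in\reg(E)$ and write $\varphi(u,u)=(\beta,\beta)$. For each $e\in s_E^{-1}(u)$ we have $(e,e)\le(u,u)$, so $\varphi(e,e)=(\beta\delta_e,\beta\delta_e)$. Here $\varphi(e,e)$ may a priori be $0$; since $(e,e)\varphi(u,u)$-type identities force consistency, we treat a zero value as simply contributing nothing to the set \eqref{cover}. We should check whether $0$ can occur, and handle it by this convention. Each $\delta_e$ is a path with $s_{PF}(\delta_e)=t_F(\beta)$. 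Hence the set in \eqref{cover} consists of paths extending $t_F(\beta)$, and it is finite because $s_E^{-1}(u)$ is finite. Thus the first clause of the definition of a cover holds automatically, and only the comparability clause needs comparing.

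The key computation concerns a nonzero $t\le\varphi(u,u)$. Such a $t$ has the form $(\beta\gamma,\beta\gamma)$ with $s_{PF}(\gamma)=t_F(\beta)$. Conversely, every such $\gamma$ gives an element $(\beta\gamma,\beta\gamma)\le(\beta,\beta)$, as one checks directly from the multiplication and the definition of $\le$. We then compute the product $t\varphi(e,e)=(\beta\gamma,\beta\gamma)(\beta\delta_e,\beta\delta_e)$ from the multiplication rule. It is nonzero exactly when $\beta\gamma$ and $\beta\delta_e$ are comparable. By left cancellation of the common prefix $\beta$, this happens exactly when $\gamma$ and $\delta_e$ are comparable. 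If $\varphi(e,e)=0$, the product is zero, which is consistent with the convention above.

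With this in hand, the two implications follow. Suppose $\varphi$ is regular, and let $\gamma$ be any path with $t_F(\beta)\preceq\gamma$, that is, $s_{PF}(\gamma)=t_F(\beta)$. Set $t:=(\beta\gamma,\beta\gamma)$. Regularity gives some $e\in s_E^{-1}(u)$ with $t\varphi(e,e)\ne0$, so $\gamma$ is comparable with $\delta_e$, and the set \eqref{cover} is a cover. Conversely, suppose the set is a cover, and let $0\ne t\le\varphi(u,u)$. Then $t=(\beta\gamma,\beta\gamma)$, and $\gamma$ is comparable with some $\delta_e$, so $t\varphi(e,e)\ne0$.

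The main obstacle is the claim that $0\ne t\le(\beta,\beta)$ forces $t=(\beta\gamma,\beta\gamma)$, together with its converse; the text before the statement only asserts the forward direction. I will verify it as follows. First, $t\le$ an idempotent makes $t$ idempotent, so $t=(\alpha,\alpha)$. Then $tt^*(\beta,\beta)=(\alpha,\alpha)(\beta,\beta)$ must equal $(\alpha,\alpha)$, which by the multiplication rule forces $\beta\preceq\alpha$. The vertex cases, where $\beta$ or $\gamma$ is a vertex, fit the same formulas because vertices act as empty prefixes. I will check these cases separately so that the cancellation step remains valid.
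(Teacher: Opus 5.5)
Your proof is correct and follows the same direct unwinding as the paper's short argument. The paper simply identifies nonzero $t\le\varphi(u,u)$ with paths $\gamma$ starting at $t_{PF}(\beta)$, and $t\varphi(e,e)\neq0$ with $\gamma\sim\delta_e$; you additionally verify these identifications (the normal form $t=(\beta\gamma,\beta\gamma)$ and its converse, left cancellation of $\beta$, finiteness of the set), and you correctly treat $\varphi(e,e)=0$, which can genuinely occur, as contributing nothing. The one degenerate case that neither you nor the paper mentions is $\varphi(u,u)=0$: there $\beta$ is undefined and regularity at $u$ holds vacuously, so that case should be excluded from, or read as vacuous in, the cover condition.
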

\begin{proof}
Take $u\in\reg(E)$ and $\beta\in\mathrm{FP}(F)$ defined by $\varphi(u,u)=:(\beta,\beta)$.
The regularity of $\varphi$ means then that, for any $\gamma\in s_{PF}^{-1}(t_{PF}(\beta))$, there exists $s_{PF}^{-1}(t_{PF}(\beta))\ni \delta_e\sim \gamma$
such that $\varphi(e,e)=(\beta\delta_e,\beta\delta_e)$ for some $e\in s_E^{-1}(u)$. Therefore, the regularity of $u$ implies that the set \eqref{cover}
is a cover of~$ t_F(\beta)$. Vice versa, if the set \eqref{cover} is a cover for any $u\in\reg(E)$, then $\varphi$ is regular.
\end{proof}
\begin{remark}\label{covrem}
Note that the cover \eqref{cover} is always orthogonal. Indeed, as for any two different edges $e,e'\in s_E^{-1}(u)$ we have $(e,e)(e',e')=0$, also 
\[
(\beta\delta_e,\beta\delta_e)(\beta\delta_{e'},\beta\delta_{e'})=\varphi(e,e)\varphi(e',e')=\varphi\big((e,e)(e',e')\big)=0.
\]
Hence, $\delta_e$ and $\delta_{e'}$ are not comparable, so the cover \eqref{cover} is orthogonal.
\end{remark}
\begin{proposition}\label{equivregularity} 
	A morphism $f$  in the category $\mathsf{MIPG}$ is regular if and only if 
	the induced extended path homomorphism $\varphi_f$ \eqref{inducedext} is regular.
\end{proposition}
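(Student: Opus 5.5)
The plan is to reduce both notions of regularity to one statement about the set $f(s_E^{-1}(u))$ and then compare them using Lemma~\ref{lem:regular.vs.cover} and Proposition~\ref{rmk:regularity}. First I compute the data in \eqref{cover} for $\varphi=\varphi_f$. By \eqref{inducedext}, $\varphi_f(u,u)=(f(u),f(u))$, so $\beta=f(u)$ is a vertex and $t_F(\beta)=f(u)$. For $e\in s_E^{-1}(u)$ we have $\varphi_f(e,e)=(f(e),f(e))$, so $\delta_e=f(e)$. Proposition~\ref{rmk:regularity} then says that $\varphi_f$ is regular if and only if, for every $u\in\reg(E)$, the set $f(s_E^{-1}(u))$ is a cover of $f(u)$. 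By Remark~\ref{covrem} this cover is automatically orthogonal.

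Next I record what monotonicity \eqref{mcondition} gives for free. If $f(e)=f(e')$, then $f(e)\preceq f(e')$, so $e=e'$. Hence $f$ is injective on edges, $|f(s_E^{-1}(u))|=|s_E^{-1}(u)|$, and distinct images are orthogonal. I also need positivity of lengths. Suppose $f(e)$ is a vertex. Since $f$ respects sources, $f(e)=f(u)$, which is $\preceq$ every $f(e')$ with $e'\in s_E^{-1}(u)$. So $s_E^{-1}(u)=\{e\}$. Moreover $f(t_E(e))=t_F(f(e))=f(u)=f(s_E(e))$, and injectivity on vertices forces $t_E(e)=u$. Thus $f(e)$ can have length zero only when $u\in{\rm reg}_0(E)$, and then $f(s_E^{-1}(u))=\{f(u)\}$.

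($\Rightarrow$) Assume $f$ is regular and take $u\in\reg(E)$. If conditions (1)(a),(b) hold at $u$, then Lemma~\ref{lem:regular.vs.cover} gives directly that $f(s_E^{-1}(u))$ is a cover of $f(u)$. Otherwise $u\in{\rm reg}_0(E)$ and $f(s_E^{-1}(u))=f(u)$. The singleton $\{f(u)\}$ is trivially a cover of $f(u)$, since every path starting at $f(u)$ extends it. Hence $\varphi_f$ is regular.

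($\Leftarrow$) Assume $\varphi_f$ is regular, so each $f(s_E^{-1}(u))$ is an orthogonal cover of $f(u)$ of the right cardinality. If all its elements have positive length, Lemma~\ref{lem:regular.vs.cover} yields (1)(a),(b) at $u$. This covers every $u\in\reg(E)\setminus{\rm reg}_0(E)$, and also every $u\in{\rm reg}_0(E)$ whose loop has a positive-length image. In the remaining case, the positivity paragraph above shows $u\in{\rm reg}_0(E)$ and $f(s_E^{-1}(u))=f(u)$, which is exactly alternative (2) of Definition~\ref{regularity}.

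The main point to handle carefully is this length-zero case. It is the only place where injectivity on vertices is used, and it is what makes the exceptional clause (2) of Definition~\ref{regularity} match the trivial cover $\{f(u)\}$.
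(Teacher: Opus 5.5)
Your proposal is correct and follows essentially the same route as the paper: you reduce via Proposition~\ref{rmk:regularity} to $f(s_E^{-1}(u))$ being a cover of $f(u)$, apply Lemma~\ref{lem:regular.vs.cover} when all images have positive length, and in the length-zero case use injectivity on vertices to get $t_E(e)=u$, which lands you in clause (2) of Definition~\ref{regularity}. The only difference is cosmetic: you get injectivity on $s_E^{-1}(u)$ and the singleton conclusion directly from monotonicity, whereas the paper derives them from the orthogonality in Remark~\ref{covrem}.
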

\begin{proof}
	Assume first that $f$ is regular and take  $u\in\reg(E)$.  If $u$ satisfies the conditions (1)(a) and (1)(b) of Definition~\ref{regularity}, we take
	 $\delta_e:=f(e)$ for every $e\in s_E^{-1}(v)$, so
	$\varphi_f(e,e)=(f(u)\delta_e,f(u)\delta_e)$. Now, since $t_{PF}(f(u))=f(u)$, the set  \eqref{cover} is a cover of $f(u)$ by 
	Lemma~\ref{lem:regular.vs.cover}. On the other hand, if $u$ satisfies the condition~(2), the set  \eqref{cover} is  $\{f(u)\}$, which is again a cover of~$f(u)$. 
	Hence, it follows from Proposition~\ref{rmk:regularity} that $\varphi_f$ is 
	regular.
	
	Assume now that $\varphi_f$ is regular and take  $u\in\reg(E)$. 
	Then the set \eqref{cover} is an orthogonal cover by Proposition~\ref{rmk:regularity} and Remark~\ref{covrem}. If it contains a vertex, 
	then its orthogonality implies that it is 
	a singleton. Consequently,  $e$ is the unique element of $s_E^{-1}(u)$ by Remark~\ref{covrem}. Also, $\varphi_f(e,e)=\varphi_f(u,u)$, so
	$f(e)=f(u)$. Furthermore, since $f$ is injective on vertices and $f(t_E(e))=t_{PF}(f(e))=t_{PF}(f(u))=f(u)$, we infer  that $t_E(e)=u$. Consequently,
	$u\in \reg_0(E)$ and the condition~(2) of Definition~\ref{regularity} is satisfed. Otherwise, if no element of the set \eqref{cover} is a vertex, 
	then the conditions (1)(a) and (1)(b) of Definition~\ref{regularity} are satisfied by Lemma~\ref{lem:regular.vs.cover}, Proposition~\ref{rmk:regularity} and 
	Remark~\ref{covrem}. Hence, $f$ is regular.
\end{proof}
%
%
%
%
%

\section{Covariant functoriality}
\noindent
\subsection{Cohn path algebras}
\begin{definition}
Let $k$ be a field and $E$ be a graph. The {\em path algebra} $kE$ is the vector space
\begin{equation}
kE:=\{f\in{\rm Map}(\mathrm{FP(E)},k)~|~f(p)\neq 0\text{ for finitely many } p\in \mathrm{FP(E)}\}
\end{equation}
equipped with the pointwise scalar multiplication and addition and the multiplication $m:kE\times kE\to kE$ extending the formulas
\begin{equation}
m(S_p,S_q):=\begin{cases}S_{pq} & \text{if }t_E(p)=s_E(q)\\0 & \text{otherwise}\end{cases},
\end{equation}
where $S_p$ is the characteristic function of a path $p\in {\rm FP}(E)$.
\end{definition}
When $p=v$ is a vertex we use the notation $P_v:=S_v$.
\begin{definition}
Let $k$ be a field and $E$ be a graph. The {\em Cohn path algebra} $C_k(E)$ is the quotient of the path algebra $k\overline{E}$ of the extended graph 
$\overline{E}$ by the ideal generated by
\[
\{S_{e^*}S_e-P_{t_E(e)}~|~e\in E^1\}.
\]
\end{definition}
We can see the Cohn path algebra $C_k(E)$ as the contracted semigroup algebra $\ol{kS(E)}$, that is, the semigroup ring $kS(E)$ modulo the ideal 
generated by~$0_{S(E)}$.
Now,
given an extended path homomorphism $\varphi:S(E)\to S(F)$, there exists a canonical algebra homomorphism $\widetilde{\varphi}:kS(E)\to kS(F)$.
Hence, since $\varphi$ 
preserves $0$, we obtain an algebra homomorphism $\varphi_*^C:C_k(E)\to C_k(F)$. Explicitly, we have
\[\label{explicit}
\varphi_*^C(S_\alpha):=S_\beta S_\gamma^*=:S_{(\beta,\gamma)}, \quad\varphi(\alpha,t_{PE}(\alpha))=:(\beta,\gamma).
\]
\begin{proposition}\label{prop:functor.cohn}
	The association of Cohn path algebras to graphs and $\varphi_*^C$ to $\varphi$ defines a covariant functor from $\mathsf{EG}$ to $k-\mathsf{Alg}$.
\end{proposition}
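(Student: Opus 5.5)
The plan is to factor the construction through the contracted semigroup algebra. First I fix an identification of $C_k(E)$ with $\overline{kS(E)} := kS(E)/k\,0_{S(E)}$, natural in $E$. On one side I have the algebra map $k\overline{E}\to \overline{kS(E)}$ sending $S_e\mapsto (e,t_E(e))$, $S_{e^*}\mapsto (t_E(e),e)$ and $P_v\mapsto (v,v)$. It kills $S_{e^*}S_e-P_{t_E(e)}$ by (CK1), so it descends to $C_k(E)$. On the other side, the universal property of $S(E)$ recalled after \cite[Proposition~3.2]{p-a02} gives a semigroup map $S(E)\to C_k(E)$ with $0\mapsto 0$. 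This map satisfies (V), (E1), (E2) and (CK1) in $C_k(E)$: (V) and (E1)/(E2) hold already in $k\overline{E}$, and (CK1) for $e\neq e'$ holds because $S_{e^*}S_{e'}=0$ in $k\overline E$ when $t_{\overline E}(e^*)=s_E(e)$ and $s_E(e')$ differ. When the sources agree I expect the needed vanishing to follow from the paper's convention for $C_k(E)$; I accept the identification $C_k(E)\cong\overline{kS(E)}$ as asserted in the text. The two maps are mutually inverse on generators, and I record that the identification sends $(\alpha,\beta)$ to $S_\alpha S_\beta^*$.

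Next, for a morphism $\varphi\colon S(E)\to S(F)$ in $\mathsf{EG}$, I take $\widetilde\varphi\colon kS(E)\to kS(F)$ to be the linear extension. It is multiplicative because $\varphi$ is a semigroup homomorphism and the product in $kS(E)$ is the bilinear extension of the semigroup product. Since $\varphi(0)=0$, $\widetilde\varphi$ maps the ideal $k\,0_{S(E)}$ into $k\,0_{S(F)}$ and so descends to $\varphi^C_*\colon\overline{kS(E)}\to\overline{kS(F)}$. Transporting along the identification gives the explicit formula \eqref{explicit}: $\varphi^C_*(S_\alpha)=S_{(\beta,\gamma)}$, where $\varphi(\alpha,t_{PE}(\alpha))=(\beta,\gamma)$, with $S_{(\beta,\gamma)}$ read as $0$ when $\varphi(\alpha,t_{PE}(\alpha))=0$. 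On adjoint generators it gives $\varphi^C_*(S_\alpha^*)=S_\gamma S_\beta^*$, since $\varphi$ commutes with the involution. That last fact holds because semigroup homomorphisms between inverse semigroups preserve unique inverses.

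Functoriality is then formal. $\widetilde{\id}=\id$, and $\widetilde{\psi\circ\varphi}=\widetilde\psi\circ\widetilde\varphi$ because both sides are linear and agree on the basis $S(E)$. Both properties pass to the quotients, so $(\psi\circ\varphi)^C_*=\psi^C_*\circ\varphi^C_*$ and $\id^C_*=\id$. Whether the algebras are unital depends on the conventions; if $k\text{-}\mathsf{Alg}$ requires unital maps, I will note that only nonunital homomorphisms are claimed.

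The main obstacle is the identification step, specifically checking that the defining relations of $C_k(E)$ give exactly (CK1) with the Kronecker delta. Everything else is bookkeeping. If the identification turns out to need extra care, I would instead define $\varphi^C_*$ directly on the generators $P_v, S_e, S_{e^*}$ by \eqref{explicit} and check the Cohn relations using conditions (1)--(4) on $(\theta^0,\theta^1)$. Compatibility with composition would then still be checked on generators.
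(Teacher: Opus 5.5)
Your proposal is the paper's argument: identify $C_k(E)$ with the contracted semigroup algebra $\overline{kS(E)}$, linearize $\varphi$ to $\widetilde{\varphi}$, use $\varphi(0)=0$ to descend to the quotients, and read off functoriality from $\widetilde{\psi\circ\varphi}=\widetilde{\psi}\circ\widetilde{\varphi}$; you additionally spell out the identification, which the paper merely asserts. The equal-source case of (CK1) has to be settled by definition, not derivation: for distinct $e,e'$ with $s_E(e)=s_E(e')$, the vanishing $S_{e^*}S_{e'}=0$ does not follow from the ideal generated by $\{S_{e^*}S_e-P_{t_E(e)}\mid e\in E^1\}$ as displayed (sending two loops at a single vertex to the same one-sided shift $T$ with $T^*T=1$ respects those relations but gives $S_{e^*}S_{e'}\mapsto 1$), so you must take the standard Cohn relations $S_{e^*}S_{e'}=\delta_{e,e'}P_{t_E(e)}$ as defining $C_k(E)$, which is what the identification $C_k(E)\cong\overline{kS(E)}$ tacitly uses.
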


\begin{proof}
	It is clear that $(\id_{S(E)})_C^*=\id_{C_k(E)}$. Given extended path homomorphisms $\varphi : S(E)\to S(E)$ and $\psi : S(F)\to S(G)$, it is straightforward to check 
	that $\widetilde{\psi\circ\varphi}=\widetilde{\psi}\circ\widetilde{\varphi}$. Because $\varphi$ and $\psi$ preserve 0, this equality goes through the quotients so 
	that $(\psi\circ\varphi)_C^*=\psi^*_C\circ \varphi^*_C$.
\end{proof}

\subsection{Leavitt path algebras}
\begin{definition}
Let $k$ be a field, $E$ be a graph, and $C_k(E)$ be the Cohn path algebra of~$E$.  The \emph{Leavitt path algebra} $L_k(E)$ of $E$ 
is the quotient of $C_k(E)$ by the ideal $I_E$ generated by 
$$
\{S_u-\sum_{e\in s_E^{-1}(u)}S_eS_e^*\mid u\in\reg(E)\}.
$$
\end{definition}
For an element $t=(\alpha,\beta)\in S(E)$ we define $S_t:=S_\alpha S_{\beta}^*$ 
 in either $C_k(E)$ or $L_k(E)$.

\begin{lemma}\label{lem:equality.cover}
	Let $E$ be a graph and $L_k(E)$ be its Leavitt path algebra. If $\alpha\in {\mathrm{FP}}(E)$ and $C:=\{\alpha_i\}_{i=1}^k$ is
	an orthogonal cover of $\alpha$, then
	\begin{equation}\label{eq:equality.cover}
		S_\alpha S_{\alpha}^*=\sum_{i=1}^{k} S_{\alpha_i}S_{\alpha_i}^*\in L_k(E).
	\end{equation}
\end{lemma}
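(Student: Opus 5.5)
The plan is to argue by strong induction on the \emph{depth} of the cover,
\[
N(C):=\max_{1\le i\le k}\big(|\alpha_i|-|\alpha|\big),
\]
which is a well-defined natural number because $C$ is finite and $\alpha\preceq\alpha_i$ for all~$i$. The induction will repeatedly apply the Leavitt relation at the vertex $t_E(\alpha)$. Throughout, I use the standard identities $S_\alpha^*S_\alpha=P_{t_E(\alpha)}$ and $S_\alpha=S_\alpha P_{t_E(\alpha)}$, which hold in $C_k(E)$ and hence in $L_k(E)$.

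\emph{Trivial case.} Suppose some $\alpha_i$ equals $\alpha$. Then $\alpha$ is comparable with every $\alpha_j$, so orthogonality forces $C=\{\alpha\}$, and \eqref{eq:equality.cover} is immediate. This covers in particular the base case $N(C)=0$. So assume from now on that every $\alpha_i$ strictly extends $\alpha$, and put $v:=t_E(\alpha)$.

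\emph{Key step: $v\in\reg(E)$.} This is the point I expect to require the most care, since regularity is needed to apply the Leavitt relation. First, $v$ is not a sink, because $\alpha_1$ strictly extends $\alpha$ and so begins with an edge emitted by~$v$. Next, take any $e\in s_E^{-1}(v)$. The path $\alpha e$ extends $\alpha$, so by the cover property it is comparable with some $\alpha_i$. Since $|\alpha_i|>|\alpha|$, both alternatives ($\alpha_i\preceq\alpha e$, which forces $\alpha_i=\alpha e$, or $\alpha e\preceq\alpha_i$) give $\alpha e\preceq\alpha_i$. Hence $e$ is the edge of $\alpha_i$ immediately following~$\alpha$. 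Therefore $e\mapsto$ (some such $i$) is injective, so $|s_E^{-1}(v)|\le k<\infty$ and $v$ is regular. The Leavitt relation now gives
\[
S_\alpha S_\alpha^*=S_\alpha P_v S_\alpha^*=\sum_{e\in s_E^{-1}(v)}S_{\alpha e}S_{\alpha e}^*.
\]

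\emph{Splitting the cover.} For $e\in s_E^{-1}(v)$ set $C_e:=\{\alpha_i\mid \alpha e\preceq\alpha_i\}$. By the previous step the sets $C_e$ are nonempty, and they partition~$C$. Each $C_e$ is an orthogonal cover of $\alpha e$. Indeed, orthogonality is inherited from $C$. For the cover property, let $\beta$ satisfy $\alpha e\preceq\beta$; then $\beta$ is comparable with some $\alpha_i$, and $\alpha_i\in C_{e'}$ with $e'\neq e$ is impossible, because $\alpha e'$ and $\alpha e$ are orthogonal and both $\alpha_i$ and $\beta$ have length $>|\alpha|$. Moreover $N(C_e)\le N(C)-1$. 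By the induction hypothesis,
\[
S_{\alpha e}S_{\alpha e}^*=\sum_{\alpha_i\in C_e}S_{\alpha_i}S_{\alpha_i}^*.
\]
Summing over $e\in s_E^{-1}(v)$ and using that the $C_e$ partition $C$ yields \eqref{eq:equality.cover}.
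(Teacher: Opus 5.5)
Your proof is correct and follows essentially the same route as the paper: induct on the length of the cover, deduce from the cover property that the end vertex is regular, apply the Leavitt relation there, and split the cover according to the next edge. The only difference is organizational: you induct directly on the depth $N(C)$ relative to $\alpha$ rather than first treating $\alpha\in E^0$ and then reducing to that case via $\alpha_i=\alpha\delta_i$, and you spell out the orthogonality argument (the case $\alpha_i=\alpha$) that the paper uses tacitly when it asserts $|\alpha_i|\ge 1$.
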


\begin{proof}
	First, we consider the case when $\alpha=u\in E^0$. Our proof is by induction on $n:=\max |\alpha_i|$. If $n=0$, then all paths are vertices. Because different vertices are orthogonal, this means that $k=1$ and $\alpha_1=u$. Then \eqref{eq:equality.cover} holds trivially.
	
	Fix $n>0$ and suppose that the result is true for all $0\leq m<n$. Since $n>0$ and all paths begin in $u$, we have that $|\alpha_i|\geq 1$ for all $i$. In particular $u$ is not a sink. We claim that $u$ is regular. Assume on the contrary that $u$ is an infinite emitter and let $e\in s_E^{-1}(u)$ be such that $e$ is not the first edge of any $\alpha_i$. Then $e$ is not comparable with any $\alpha_i$, contradicting the hypothesis that $\{\alpha_i\}_{i=1}^k$ is a cover for $u$. We then have that
	\begin{equation}\label{eq:eq.cover.pf.1}
		S_u=\sum_{e\in s_E^{-1}(u)} S_eS_e^*.
	\end{equation}
	For each $e\in s_E^{-1}(u)$, let $C_e:=\{\beta\in  \mathrm{FP}(E)\mid e\beta\in C\}$. We claim that $C_e$ is a cover for $t_E(e)$ consisting of mutually orthogonal paths. Let $\beta,\beta'\in C_e$ be such that $\beta\neq\beta'$ so that $e\beta,e\beta'\in C$ and $e\beta\neq e\beta'$. By hypothesis $e\beta$ and $e\beta'$ are not comparable so that $\beta$ and $\beta'$ are not comparable. Now, let $\gamma$ be a path such that $s_{PE}(\gamma)= t_E(e)$. Then $e\gamma$ is a path starting in $u$ and so it is comparable to some path in $C$. As we have shown above, all paths in $C$ in this case have length greater or equal to 1, so $e\gamma$ must be comparable to some $e\beta$ for some $\beta\in C_e$, and hence $\gamma$ is comparable to $\beta$. Clearly $\max\{|\beta|\mid \beta\in C_e\}<n$, so that, by the induction hypothesis
	\begin{equation}\label{eq:eq.cover.pf.2}
		S_{t_E(e)}=\sum_{\beta\in C_e} S_{\beta}S_{\beta}^*.
	\end{equation}
	Combining \eqref{eq:eq.cover.pf.1} and \eqref{eq:eq.cover.pf.2}, we obtain
	\begin{align}
		S_u &=\sum_{e\in s_E^{-1}(u)} S_eS_e^*\nonumber\\
		&=\sum_{e\in s_E^{-1}(u)} S_eS_{t_E(e)}S_e^*\nonumber\\
		&=\sum_{e\in s_E^{-1}(u)}\sum_{\beta\in C_e}S_eS_{\beta}S_{\beta}^*S_e^*\nonumber\\
		&=\sum_{\alpha\in C}S_{\alpha}S_{\alpha}^*\nonumber\\
		&=\sum_{i=1}^{k} S_{\alpha_i}S_{\alpha_i}^*
	\end{align}
	completing the proof in the first case.
	
	For an arbitrary $\alpha\in {\rm FP}(E)$, let $C':=\{\delta_i\}_{i=1}^k$ be such that $\alpha_i=\alpha\delta_i$ for every $i$.
	It is straightforward to check that $C'$ is a cover of $t_{PE}(\alpha)$. Hence,
	\begin{align}
		S_{\alpha}S_{\alpha}^* &=S_{\alpha}S_{t_{PE}(\alpha)}S_{\alpha}^* \nonumber\\
		&=\sum_{i=1}^k S_{\alpha}S_{\delta_i}S_{\delta_i}^*S_{\alpha}^* \nonumber\\
		&=\sum_{i=1}^kS_{\alpha_i}S_{\alpha_i}^*,
	\end{align}
	which ends the proof.
\end{proof}

\begin{lemma}\label{lem:equivalence.regularity}
	Let $\varphi:S(E)\to S(F)$ be an extended path homomorphism and $k$ be a field. Then the following statements are equivalent:
	\begin{enumerate}
		\item 
		The extended path homomorphism $\varphi$ is regular.
		\item 
		The equality
		\begin{equation}\label{eq:lemma.regularity}
			S_{\varphi(u,u)}=\sum_{e\in s_E^{-1}(u)} S_{\varphi(e,e)}\in L_k(F)
		\end{equation}
		holds for every $u\in\reg(E)$.
		\item 
		The induced algebra homomorphism $\varphi_*^C\colon C_k(E)\to C_k(F)$ satisfies $\varphi_*^C(I_E)\scj I_F$.
	\end{enumerate}
\end{lemma}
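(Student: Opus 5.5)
I would prove the cycle (1)$\Rightarrow$(2)$\Rightarrow$(3)$\Rightarrow$(1). The last implication is the substantial one.

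\textbf{(1)$\Rightarrow$(2).} Fix $u\in\reg(E)$ and write $\varphi(u,u)=(\beta,\beta)$ and $\varphi(e,e)=(\beta\delta_e,\beta\delta_e)$ for $e\in s_E^{-1}(u)$. By Proposition~\ref{rmk:regularity} and Remark~\ref{covrem}, the set $\{\delta_e\}_e$ is an orthogonal cover of $t_F(\beta)$. The map $e\mapsto\delta_e$ is injective, since distinct edges give orthogonal, hence distinct, $\delta_e$ (note that $(\beta\delta,\beta\delta)^2\neq0$). It follows that $\{\beta\delta_e\}_e$ is an orthogonal cover of $\beta$ with exactly $|s_E^{-1}(u)|$ distinct elements. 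Lemma~\ref{lem:equality.cover} then gives
\[
S_\beta S_\beta^*=\sum_e S_{\beta\delta_e}S_{\beta\delta_e}^*,
\]
which is \eqref{eq:lemma.regularity}.

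\textbf{(2)$\Rightarrow$(3).} Since $\varphi^C_*(S_u)=S_{\varphi(u,u)}$ and $\varphi^C_*(S_eS_e^*)=S_{\varphi(e,e)}$ (both hold because $\varphi$ is multiplicative on $S(E)$), each generator $S_u-\sum_e S_eS_e^*$ of $I_E$ is mapped to an element whose image in $L_k(F)=C_k(F)/I_F$ vanishes by (2). As $\varphi^C_*$ is an algebra homomorphism, it maps the ideal $I_E$ into $I_F$. Conversely, (3) immediately gives (2) by the same computation, so the real content is showing that (2), equivalently (3), implies (1).

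\textbf{(2)$\Rightarrow$(1).} Assume regularity fails at some $u\in\reg(E)$: there is $0\neq t=(\beta\gamma,\beta\gamma)\le\varphi(u,u)$ with $t\varphi(e,e)=0$ for all $e\in s_E^{-1}(u)$. Multiply \eqref{eq:lemma.regularity} on the left by $S_t$. The left side becomes $S_tS_{\varphi(u,u)}=S_{t\varphi(u,u)}=S_t$, while the right side vanishes because $S_tS_{\varphi(e,e)}=S_{t\varphi(e,e)}=S_0=0$. Hence $S_{\beta\gamma}S_{\beta\gamma}^*=0$ in $L_k(F)$. This is impossible, since
\[
S_{\beta\gamma}^*\bigl(S_{\beta\gamma}S_{\beta\gamma}^*\bigr)S_{\beta\gamma}=P_{t(\gamma)},
\]
and vertex projections are nonzero in Leavitt path algebras, a standard fact (for instance via the graded/representation argument of \cite{aam-17}).

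The main obstacle I expect is justifying that $S_{(\alpha,\alpha')}\mapsto$ multiplication is compatible, i.e.\ that $S_sS_{s'}=S_{ss'}$ in $C_k(F)$ for $s,s'\in S(F)$, including the case $ss'=0$ giving $0$. This follows from the identification $C_k(F)\cong\overline{kS(F)}$. The other ingredient is the nonvanishing of $P_v$ in $L_k(F)$, which I would cite rather than reprove.
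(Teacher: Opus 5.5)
Your proof is correct and follows essentially the same route as the paper: (1)$\Rightarrow$(2) via Proposition~\ref{rmk:regularity}, Remark~\ref{covrem} and Lemma~\ref{lem:equality.cover}; (2)$\Leftrightarrow$(3) by evaluating $\varphi^C_*$ on the generators of $I_E$; and (2)$\Rightarrow$(1) by multiplying \eqref{eq:lemma.regularity} by $S_t$. The differences are minor: you apply the cover lemma directly to $\{\beta\delta_e\}$ as a cover of $\beta$, whereas the paper applies it to $\{\delta_e\}$ and conjugates by $S_\beta$, and you justify $S_t\neq 0$ in $L_k(F)$ (by conjugating with $S_{\beta\gamma}$ to get a nonzero vertex projection), a step the paper simply asserts.
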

\begin{proof}
	(1)$\Rightarrow$(2) If $\varphi$ is regular,  $u\in\reg(E)$ and $\varphi(u,u)=:(\beta,\beta)$, then, combining \eqref{explicit},
Lemma~\ref{lem:equality.cover},	Proposition~\ref{rmk:regularity} and Remark~\ref{covrem}, we obtain
	\begin{align}
		S_{\varphi(u,u)}&=S_\beta S_\beta^*\nonumber\\
		&=S_\beta S_{t_{PF}(\beta)} S_\beta^*\nonumber\\
		&=\sum_{e\in s_E^{-1}(u)} S_\beta S_{\delta_e}S_{\delta_e}^* S_\beta^*\nonumber\\
		&=\sum_{e\in s_E^{-1}(u)} S_{\beta\delta_e}S_{\beta\delta_e}^*\nonumber\\
		&=\sum_{e\in s_E^{-1}(u)}S_{\varphi(e,e)}.
	\end{align}
	
	(2)$\Rightarrow$(1) If $u\in\reg(E)$ and $0\neq t\leq \varphi(u,u)$, then 
	\begin{equation}
		S_t S_{\varphi(u,u)}=S_{t\varphi(u,u)}=S_t\neq 0.
	\end{equation}
	Therefore, it follows from \eqref{eq:lemma.regularity} that there exists $e\in s_E^{-1}(u)$ such that $S_tS_{\varphi(e,e)}\neq 0$, 
	which implies that $t\varphi(e,e)\neq 0$. Hence, $\varphi$ is regular.
	
	(2)$\Leftrightarrow$(3)
	The equivalence between (2) and (3) is immediate.
\end{proof}

With the above lemma, given a regular extended path homomorphism \mbox{$\varphi:S(E)\to S(F)$}, we can define an algebra homomorphism 
$\varphi_*^L:L_k(E)\to L_k(F)$ by
\begin{equation}\label{eq:from.cohn.to.leavitt}
\varphi_*^L(a+I_E):=\varphi_*^C(a)+I_F
\end{equation}
for every $a\in C_k(E)$.

\begin{theorem}
	For every graph $E$, the identity map $\id_{S(E)}$ is a regular extended path homomorphism. 
	Moreover, for any regular extended path homomorphisms 
	$\varphi:S(E)\to S(F)$ and $\psi:S(F)\to S(G)$, the composition $\psi\circ\varphi$ is a regular extended path homomorphism. 
	We thus obtain  the subcategory of $\mathsf{EG}$ consisting of all regular extended path homomorphisms and all graphs. 
	We denote this category by~$\mathsf{REG}$.
\end{theorem}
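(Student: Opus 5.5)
The plan is to use the algebraic characterisation of regularity in Lemma~\ref{lem:equivalence.regularity}, namely condition~(3): an extended path homomorphism $\varphi$ is regular if and only if $\varphi_*^C(I_E)\subseteq I_F$. This turns both claims into statements about ideals, and these ideals are preserved under composition, which is easier to handle than covers.

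For the identity, $(\id_{S(E)})_*^C=\id_{C_k(E)}$, as noted in the proof of Proposition~\ref{prop:functor.cohn}. It trivially maps $I_E$ into $I_E$, so $\id_{S(E)}$ is regular by Lemma~\ref{lem:equivalence.regularity}. Alternatively, one can argue directly from Definition~\ref{def:regularity}. If $u\in\reg(E)$ and $0\neq t\leq (u,u)$, then $t=(\gamma,\gamma)$ with $s_{PE}(\gamma)=u$. Since $u$ is regular, it is not a sink: if $\gamma$ is a vertex, pick any $e\in s_E^{-1}(u)$; otherwise let $e$ be the first edge of $\gamma$. In both cases $e$ and $\gamma$ are comparable, so $t(e,e)\neq0$.

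For composition, take regular $\varphi\colon S(E)\to S(F)$ and $\psi\colon S(F)\to S(G)$. The composite $\psi\circ\varphi$ is a zero-preserving semigroup homomorphism, hence an extended path homomorphism. By Proposition~\ref{prop:functor.cohn}, $(\psi\circ\varphi)_*^C=\psi_*^C\circ\varphi_*^C$. Lemma~\ref{lem:equivalence.regularity} gives $\varphi_*^C(I_E)\subseteq I_F$ and $\psi_*^C(I_F)\subseteq I_G$, so
\[
(\psi\circ\varphi)_*^C(I_E)=\psi_*^C(\varphi_*^C(I_E))\subseteq\psi_*^C(I_F)\subseteq I_G .
\]
Applying Lemma~\ref{lem:equivalence.regularity} once more shows that $\psi\circ\varphi$ is regular. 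Since identities are regular and composites of regular morphisms are regular, the regular extended path homomorphisms together with all graphs form a subcategory $\mathsf{REG}$ of $\mathsf{EG}$.

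The only delicate point is making sure Lemma~\ref{lem:equivalence.regularity} is used honestly. The equivalence (2)$\Leftrightarrow$(3) is stated there as immediate, and it needs that $\varphi_*^C$ maps the generator $S_u-\sum_{e}S_eS_e^*$ of $I_E$ to $S_{\varphi(u,u)}-\sum_e S_{\varphi(e,e)}$. This holds because $\varphi_*^C(S_eS_e^*)=S_{\varphi(e,e)}$, as $\varphi$ is multiplicative on $(e,t(e))(t(e),e)=(e,e)$. The implication (1)$\Rightarrow$(2) rests on the cover description in Proposition~\ref{rmk:regularity}. If one prefers to avoid the field entirely, a direct check is also possible. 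Given $0\neq t\leq\psi\varphi(u,u)$, write $\varphi(u,u)=(\beta,\beta)$ with cover $\{\delta_e\}$ of $t_F(\beta)$. One would then need regularity of $\psi$ at non-regular vertices, which is not available, so the algebraic route is the one I would use.
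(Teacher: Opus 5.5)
Your proof is correct and is the paper's argument: both claims follow from characterization (3) of Lemma~\ref{lem:equivalence.regularity}, using $(\id_{S(E)})^C_*=\id_{C_k(E)}$ and $(\psi\circ\varphi)^C_*(I_E)=\psi^C_*(\varphi^C_*(I_E))\subseteq\psi^C_*(I_F)\subseteq I_G$. One correction to your closing aside, which does not affect the proof: a direct combinatorial check would not need regularity of $\psi$ at non-regular vertices, because unless the cover $\{\delta_e\}$ is just $\{t_F(\beta)\}$ (a trivial case), its finiteness and covering property force $t_F(\beta)$ and the targets of all proper prefixes of the $\delta_e$ to be regular (as in the proof of Lemma~\ref{lem:equality.cover}), so that route also works, only with more bookkeeping.
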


\begin{proof}
	The identity map $\id_{S(E)}$ clearly satisfies (3) of Lemma~\ref{lem:equivalence.regularity} and hence it is regular.
	As for the composition, we observe that
	\begin{equation}
		(\psi\circ\varphi)^*_C(I_E)=\psi^*_C\circ \varphi^*_C (I_E)\subseteq \psi^*_C(I_F)\subseteq I_G
	\end{equation}
	and hence $\psi\circ \varphi$ is regular by Lemma~\ref{lem:equivalence.regularity}.
\end{proof}
Combining the above theorem with Proposition~\ref{equivregularity}, we obtain:
\begin{corollary}
Let $f$ be a morphism in the category $\mathsf{RMIPG}$. Then the
formulas \eqref{inducedext}
render $\mathsf{RMIPG}$ a subcategory of $\mathsf{REG}$ via the assignment $f\mapsto \varphi_f$.
\end{corollary}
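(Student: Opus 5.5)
The plan is to combine the two facts already in place: $\mathsf{MIPG}$ embeds faithfully into $\mathsf{EG}$ via $f\mapsto\varphi_f$, and, by Proposition~\ref{equivregularity}, $f$ is regular exactly when $\varphi_f$ is regular. We then only have to check that this embedding restricts to a faithful functor $\mathsf{RMIPG}\to\mathsf{REG}$ whose image is closed under composition and contains identities.

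\textbf{Objects and morphisms.} Both categories have all graphs as objects, and the functor is the identity on objects. Take a morphism $f\colon E\to F$ in $\mathsf{RMIPG}$. It is a regular morphism of $\mathsf{MIPG}$, so $\varphi_f$ is an extended path homomorphism by the earlier proposition on \eqref{inducedext}. By the implication ``$f$ regular $\Rightarrow$ $\varphi_f$ regular'' of Proposition~\ref{equivregularity}, $\varphi_f$ is a morphism of $\mathsf{REG}$.

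\textbf{Functoriality and faithfulness.} These are inherited from the $\mathsf{MIPG}\to\mathsf{EG}$ statement, since $\mathsf{RMIPG}$ is a subcategory of $\mathsf{MIPG}$ and $\mathsf{REG}$ is a subcategory of $\mathsf{EG}$. Concretely:
\begin{itemize}
\item $\varphi_{\id}=\id_{S(E)}$;
\item $\varphi_{f\circ g}=\varphi_f\circ\varphi_g$;
\item $f\mapsto\varphi_f$ is injective.
\end{itemize}
Hence the image of $\mathsf{RMIPG}$ is a subcategory of $\mathsf{EG}$ isomorphic to $\mathsf{RMIPG}$, and it lies inside $\mathsf{REG}$. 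Closure of that image under composition follows from $\mathsf{RMIPG}$ being closed under composition (\cite{ht-24}) together with the composition formula. Alternatively, the preceding theorem shows $\mathsf{REG}$ is closed under composition, and $\varphi_f\circ\varphi_g=\varphi_{f\circ g}$ with $f\circ g\in\mathsf{RMIPG}$.

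\textbf{Expected obstacle.} There is essentially none, since the regularity comparison was already carried out in Proposition~\ref{equivregularity}. The only point worth stating explicitly is that the converse direction of that proposition gives more than an embedding: $\mathsf{RMIPG}$ is exactly the preimage of $\mathsf{REG}$ under the embedding of $\mathsf{MIPG}$ into $\mathsf{EG}$. We record this as a remark.
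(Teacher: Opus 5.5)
Your proposal is correct and takes essentially the same approach as the paper, which obtains the corollary by combining the embedding $f\mapsto\varphi_f$ of $\mathsf{MIPG}$ into $\mathsf{EG}$, the theorem that $\mathsf{REG}$ is a subcategory of $\mathsf{EG}$, and Proposition~\ref{equivregularity}. One small point: your ``alternative'' closure argument is redundant as worded, since it already assumes $f\circ g\in\mathsf{RMIPG}$; the version that genuinely uses the theorem on $\mathsf{REG}$ deduces regularity of $f\circ g$ from regularity of $\varphi_f\circ\varphi_g=\varphi_{f\circ g}$ via the converse direction of Proposition~\ref{equivregularity}, which is what your closing remark implicitly provides.
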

Now, Proposition~\ref{prop:functor.cohn} and Equation~\eqref{eq:from.cohn.to.leavitt} imply the main general result of this paper:
\begin{theorem}
	The association of Leavitt path algebras to graphs and $\varphi_*^L$ to $\varphi$ defines a covariant functor from $\mathsf{REG}$ to $k\text{-}\mathsf{Alg}$.
\end{theorem}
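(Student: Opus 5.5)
The plan is to obtain the functor by descending the Cohn-level functor of Proposition~\ref{prop:functor.cohn} along the quotient maps $\pi_E\colon C_k(E)\to L_k(E)=C_k(E)/I_E$. I will check three things: the assignment on morphisms is well defined, it sends identities to identities, and it respects composition.

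For well-definedness, I take a regular extended path homomorphism $\varphi\colon S(E)\to S(F)$. By Lemma~\ref{lem:equivalence.regularity}, (1)$\Rightarrow$(3), the algebra homomorphism $\varphi_*^C$ satisfies $\varphi_*^C(I_E)\subseteq I_F$. Hence the formula \eqref{eq:from.cohn.to.leavitt} does not depend on the representative $a$ of $a+I_E$. It defines an algebra homomorphism $\varphi_*^L\colon L_k(E)\to L_k(F)$, characterised by
\[
\varphi_*^L\circ\pi_E=\pi_F\circ\varphi_*^C .
\]
Linearity and multiplicativity pass to the quotient because $\pi_E$ and $\pi_F$ are surjective algebra homomorphisms.

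For identities, $(\id_{S(E)})_*^C=\id_{C_k(E)}$ by Proposition~\ref{prop:functor.cohn}, so $(\id_{S(E)})_*^L$ acts as the identity on every coset $a+I_E$. For composition, let $\varphi\colon S(E)\to S(F)$ and $\psi\colon S(F)\to S(G)$ be morphisms in $\mathsf{REG}$. Then $\psi\circ\varphi$ is again regular by the preceding theorem, so $(\psi\circ\varphi)_*^L$ is defined. Proposition~\ref{prop:functor.cohn} gives $(\psi\circ\varphi)_*^C=\psi_*^C\circ\varphi_*^C$. Therefore
\[
(\psi\circ\varphi)_*^L(a+I_E)=\psi_*^C(\varphi_*^C(a))+I_G=\psi_*^L\bigl(\varphi_*^C(a)+I_F\bigr)=\psi_*^L\bigl(\varphi_*^L(a+I_E)\bigr)
\]
for every $a\in C_k(E)$, which is the functoriality.

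The only substantive step is the ideal inclusion $\varphi_*^C(I_E)\subseteq I_F$, and it is already supplied by Lemma~\ref{lem:equivalence.regularity}. The one point to verify there is the equivalence (2)$\Leftrightarrow$(3), which the paper calls immediate. By \eqref{explicit}, $\varphi_*^C$ maps each generator $S_u-\sum_{e\in s_E^{-1}(u)}S_eS_e^*$ of $I_E$ to
\[
S_{\varphi(u,u)}-\sum_{e\in s_E^{-1}(u)}S_{\varphi(e,e)} ,
\]
and this lies in $I_F$ exactly when \eqref{eq:lemma.regularity} holds. Since $I_E$ is generated by these elements as a two-sided ideal and $\varphi_*^C$ is an algebra homomorphism, the image of the whole ideal then lies in $I_F$. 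Everything else is formal descent to quotients.
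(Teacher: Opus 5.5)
Your proposal is correct and follows the paper's route. The paper deduces the theorem directly from Proposition~\ref{prop:functor.cohn} and the descent formula \eqref{eq:from.cohn.to.leavitt}, whose well-definedness rests on the inclusion $\varphi_*^C(I_E)\subseteq I_F$ from Lemma~\ref{lem:equivalence.regularity}; you spell out the identity, composition and ideal-generation checks that the paper leaves implicit.
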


Since the {\em graph C*-algebra} $C^*(E)$ of a graph $E$ is 
the universal enveloping C*-algebra of the complex $*$-algebra $L_\mathbb{C}(E)$ (e.g., see~\cite[Definition~5.2.1]{aam-17}),
for any
$*$-homomorphism 
\mbox{$\phi:L_\mathbb{C}(E)\to L_\mathbb{C}(F)$} there exists a unique $*$-homomorphism
 \mbox{$\widetilde{\phi}:C^*(E)\to C^*(F)$} extending $\phi$ (e.g., see~\cite[Theorem~4.4]{at-11}). Therefore, the above theorem enjoys the following
 corollary:
\begin{corollary}
Let $\mathsf{C^*}\text{-}\mathsf{Alg}$ denote the category of C*-algebras and $*$-homomorphisms. 
The association of a graph C*-algebra to graphs and $\widetilde{\varphi_*}$ to $\varphi$ defines a~covariant functor from $\mathsf{REG}$ 
to $\mathsf{C^*}\text{-}\mathsf{Alg}$.

\begin{lemma}\label{lem:from.hom.to.reg}
Let $\varphi\colon S(E)\to S(F)$ be an extended path homomorphism and let\linebreak
\mbox{$\Phi\colon L_k(E)\to L_k(F)$} be a linear map
such that
	\begin{gather*}
		\Phi(S_v)=S_{\varphi(v,v)}\text{ for all }v\in \reg(E), \quad\text{and}\quad
		\Phi(S_eS_e^*)=S_{\varphi(e,e)}\text{ for all }e\in s_E^{-1}(\reg(E)).
	\end{gather*}
	Then $\varphi$ is regular.
\end{lemma}
\begin{proof}
	For any $v\in\reg(E)$, we have
	\begin{equation}
		S_{\varphi(v,v)}=\Phi(S_v)=\Phi\left(\sum_{e\in s_E^{-1}(v)} S_eS_e^*\right)=\sum_{e \in s^{-1}_E(v)}S_{\varphi(e,e)}.
	\end{equation}
	Hence, the regularity of $\varphi$ follows from Lemma~\ref{lem:equivalence.regularity}.
\end{proof}

\end{corollary}

\section{Homomorphisms given by moves}
\noindent
The most obvious concept of a morphism between graphs is a \emph{graph homomorphism}, which can be defined as a length preserving path
homomorphism. One often writes a graph homomorphism as a pair of maps mapping edges to edges and vertices to vertices. 
Graphs and graph homomorphisms form a category. 
Just as path homomorphisms need three conditions to admit a covariant functor to the category of algebras and algebra homomorphisms,
so do graph homomorphisms to admit a contravariant functor:
\begin{definition}\cite{k-t06,ht-24}
Let $f\colon E\to F$ be a graph homomorphism. We say that $f$ is {\em admissible} when it is
\begin{enumerate}
\item 
{\em proper}, i.e.\ the sets $f^{-1}(v)$ and $f^{-1}(e)$ are finite for any $v\in E^0$ and $e\in E^1$,
\item 
{\em target bijective}, i.e.\ for any $x\in F^1$ the restricted target map 
$$
t_E\colon f^{-1}(x)\longrightarrow f^{-1}(t_F(x))
$$
is bijective,
\item {\em regular}, i.e.\ $f^{-1}(\mathrm{reg}(F))\subseteq \mathrm{reg}(E)$.
\end{enumerate}
\end{definition}

The category of graphs and admissile graph homomorphisms enjoys a contravariant functor to the category of $\mathbb{Z}$-graded algebras and
$\mathbb{Z}$-graded algebra homomorphisms ($U(1)$-C*-algebras and $U(1)$-equivariant $*$-homomorphisms) (see \cite{ht-24}, cf.\ \cite{k-t06}). 
In this section, we will use this functor to encode a multi-out-split isomorphism. We will also combine it with the covariant functor from the category
$\mathsf{REG}$ of graphs and extended path homomorphisms to substantially extend the scope of applications.

Herein, we will need a categorical formulation of the target-bijectivity condition  provided in \cite[Proposition~2.3]{ht-25}: 
a graph homomorphism $f\colon E\to F$ is target bijective if and only if $E^1$ is the pullback of 
$F^1\stackrel{t_F}{\to}F^0 \stackrel{f}{\leftarrow} E^0$ in the category of sets and maps. This point of view allows us to classify
all admissible graph homomorphisms:\footnote{We are very grateful to Tomasz Maszczyk for inspiring discussions.}
To begin with, we need to define  a standard graph homomorphism. To this end, let $E:=(E^0,E^1,s_E,t_E)$ be any  graph
 and $\pi_0\colon E^0_\pi\to E^0$ be any map. Define $E^1_\pi$ as the following fiber product
(pullback in the category of sets and maps):
$$
E_\pi^1:=\{(e,v)\in E^1\times E_\pi^0\;|\;t_E(e)=\pi_0(v)\}.
$$
Next, define $\pi_1:=\mathrm{pr}_1\colon E_\pi^1\to E^1$ to be the canonical projection onto the first component, and
$t_\pi:=\mathrm{pr}_2\colon E_\pi^1\to E_\pi^0$  to be the canonical projection onto the second component. Now assume that there exists a map
$s_\pi\colon E_\pi^1\to E_\pi^0$ such that $\pi_0\circ s_\pi=s_E\circ\pi_1$, and define the graph $E_\pi:=(E_\pi^0,E_\pi^1,s_\pi,t_\pi)$.
Then we obtain the following commutative diagrams 
\begin{equation}\label{doublediag}
	\begin{tikzcd}
	{E_\pi^0} && {E_\pi^1} && {E_\pi^0} \\
	\\
	{E^0} && {E^1} && {E^0}
	\arrow["{\pi_0}", from=1-1, to=3-1]
	\arrow["{s_\pi}"', from=1-3, to=1-1]
	\arrow["{{t_\pi}}", from=1-3, to=1-5]
	\arrow["{{\pi_1}}", from=1-3, to=3-3]
	\arrow["{{\pi_0}}", from=1-5, to=3-5]
	\arrow["{s_E}"', from=3-3, to=3-1]
	\arrow["{{t_E}}", from=3-3, to=3-5]
\end{tikzcd}
\end{equation}
showing that $\pi:=(\pi_0,\pi_1)\colon E_\pi\to E$ is a graph homomorphism. 
We call $\pi$ a \emph{standard graph homomorphism}. 

We will also need the concept of ``an isomorphism of graph homomorphisms''. We say that graph homomorphisms $f\colon E\to F$ and $g\colon G\to H$
are \emph{isomorphic} whenever there exist graph isomorphisms $\alpha\colon E\to G$ and $\beta\colon F\to H$ such that $\beta\circ f= g\circ\alpha$.
Now we are ready to state and prove:
\begin{proposition}\label{class}
Let $E$ be a graph. A standard graph homomorphism $\pi\colon E_\pi\to E$ is admissible if and only if
the vertex map $\pi_0\colon E_\pi^0\to E^0$  is proper (finite to one) and satisfies the inclusion
	\begin{equation}\label{inc}
		\pi_0^{-1}(\reg(E))\subseteq s_{\pi}(E_\pi^1).
	\end{equation}
Moreover, any admissible graph homomorphism  $f\colon F\to E$ is isomorphic to an admissible standard graph homomorphism~$\pi$.
\end{proposition}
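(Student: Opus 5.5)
We check the three admissibility conditions one by one for the standard graph homomorphism $\pi=(\pi_0,\pi_1)$. Note the direction: the statement is about $\pi\colon E_\pi\to E$, so admissibility asks that $\pi^{-1}(v)$ and $\pi^{-1}(e)$ be finite, that $t_\pi\colon\pi_1^{-1}(x)\to\pi_0^{-1}(t_E(x))$ be bijective for every $x\in E^1$, and that $\pi_0^{-1}(\reg(E))\subseteq\reg(E_\pi)$.

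\emph{Target bijectivity.} This holds automatically. By construction $\pi_1^{-1}(x)=\{(x,v)\mid \pi_0(v)=t_E(x)\}$, and $t_\pi(x,v)=v$, so $t_\pi$ maps this fibre bijectively onto $\pi_0^{-1}(t_E(x))$. Equivalently, $E^1_\pi$ is the pullback, which is exactly the criterion of \cite[Proposition~2.3]{ht-25}.

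\emph{Properness.} This reduces to $\pi_0$ being finite to one. The edge fibre $\pi_1^{-1}(x)$ is in bijection with $\pi_0^{-1}(t_E(x))$. Hence properness of $\pi_0$ gives properness on edges. Conversely, properness of $\pi$ includes finiteness of all vertex fibres.

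\emph{Regularity.} Here we must compare $\pi_0^{-1}(\reg(E))\subseteq\reg(E_\pi)$ with \eqref{inc}. Take $w\in E^0_\pi$ with $\pi_0(w)=v\in\reg(E)$. By the commuting square $\pi_0\circ s_\pi=s_E\circ\pi_1$, we have
\[
s_\pi^{-1}(w)\subseteq \pi_1^{-1}(s_E^{-1}(v))=\bigsqcup_{e\in s_E^{-1}(v)}\pi_1^{-1}(e).
\]
This is a finite union of finite sets once $\pi_0$ is proper. So $w$ emits finitely many edges, and $w$ is regular if and only if it is not a sink, i.e.\ if and only if $w\in s_\pi(E^1_\pi)$. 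Thus, given properness, the regularity condition is equivalent to \eqref{inc}. This proves the ``if and only if''.

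\emph{Classification.} Let $f\colon F\to E$ be admissible, and set $E^0_\pi:=F^0$ and $\pi_0:=f^0$. By target bijectivity, the map $F^1\ni y\mapsto (f^1(y),t_F(y))\in E^1_\pi$ is a bijection: it is injective and surjective fibrewise over each $x\in E^1$. Define $s_\pi$ by transporting $s_F$ through this bijection. Then $\pi_0\circ s_\pi=s_E\circ\pi_1$ because $f$ is a graph homomorphism. Taking $\alpha:=(\mathrm{id}_{F^0},\text{this bijection})$ and $\beta:=\mathrm{id}_E$, we check that $\alpha$ is a graph isomorphism $F\to E_\pi$ intertwining sources and targets, and that $\pi\circ\alpha=f$. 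Admissibility of $\pi$ then follows either from the first part or directly, since admissibility is invariant under isomorphism.

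The main subtle point is the regularity step. We must use properness to guarantee finite emission at $w$ before reducing regularity to being a non-sink. Everything else is bookkeeping with the fibre product.
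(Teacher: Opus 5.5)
Your proposal is correct and follows essentially the same route as the paper: the pullback description of $E_\pi^1$ gives target bijectivity for free and lets properness of $\pi_0$ pass to $\pi_1$; properness of $\pi_1$ together with finiteness of $s_E^{-1}(v)$ for $v\in\reg(E)$ rules out infinite emitters over regular vertices, so regularity reduces to the inclusion $\pi_0^{-1}(\reg(E))\subseteq s_\pi(E_\pi^1)$; and the classification uses target bijectivity of $f$ to identify $F^1$ with the fibre product and transport $s_F$ across. Your inclusion $s_\pi^{-1}(w)\subseteq\pi_1^{-1}(s_E^{-1}(v))$ spells out the step the paper states in one line, and your isomorphism $F\to E_\pi$ is just the inverse of the paper's $(\id,\sigma)\colon E_\pi\to F$.
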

\begin{proof}
To prove that $\pi$ is admissible, observe first that, as $\pi_0$ is proper, so is $\pi_1$ by the pullback property of the right square diagram in~\eqref{doublediag}.
Consequently, $\pi$ is proper.  Now, as the target bijectivity is satisfied automatically, it remains to show that $\pi$ is regular.
To this end, first note that $\pi_0^{-1}(\reg(E))$ cannot contain infinite emitters by the properness of~$\pi_1$, so
\eqref{inc} implies the desired $\pi_0^{-1}(\reg(E))\subseteq \reg(E_\pi^1)$. We have thus shown that $\pi\colon E_\pi\to E$ is admissible.
Vice versa, if $\pi$ is admissible, then the vertex map $\pi_0$ is proper and the inclusion \eqref{inc} holds.

Suppose now that $f\colon F\to E$ is an admissible graph homomorphism, and take $\pi_0:=f_0\colon E^0_\pi\to E^0$.
It follows from the target bijectivity of $f$ that  there exists a unique bijection
$\sigma\colon E^1_\pi\to F^1$ such that \mbox{$\pi_1=f_1\circ\sigma$} and $t_\pi=t_F\circ \sigma$. 
Now, we complete the construction of $E_\pi$ by taking $s_\pi:=s_F\circ\sigma$, and verify that $\pi$ is a standard graph homomorphism.
Better still, we observe that
$\alpha:=(\id,\sigma)\colon E_\pi\to F$ is a graph isomorphism. Hence,
as every graph isomorphism is automatically admissible, we infer that $\pi=f\circ \alpha\colon E_\pi\to E$ is an admissible graph
homomorphism. 
\end{proof}

\subsection{Multi-out-split}
\begin{definition} [\cite{bp04}] \label{OutSplit}
Let $E$ be a graph. For every $v\in E^0$ that emits at least one edge, we partition $s_E^{-1}(v)$ into a finite disjoint union of 
nonempty sets $\epsilon_v^1 \cup ... \cup \epsilon_v^{m(v)}$, where $m(v)\geq 1$ and at most one $\epsilon_v^i$ is infinite. 
If $v$ is a sink, we set $m(v):=1$. Let $E_O$ denote the graph given by:
\begin{gather}
	E_O^0:= \{v^i\mid v\in E^0,\,1\leq i\leq m(v)\},\nonumber\\
	E_O^1:= \{e^i\,|\,e\in E^1,\, 1\leq i\leq m(t_E(e))\},\nonumber\\
	s_{E_O}(e^i):=s_E(e)^j\text{ for }e\in\epsilon_{s_E(e)}^j ,	\\
	t_{E_O}(e^i):=t_E(e)^i.\nonumber
\end{gather}
We say that $E_O$ is formed by performing a \emph{multi-out-split} of $E$. In the case of  $m(v)=1$ for every $v\in E^0$, we obtain $E_O=E$, 
and call it the \emph{trivial out-split}. Given $X\subseteq \mathrm{reg}(E)$, a \emph{maximal out-split} at $X$ is a multi-out-split such that $\epsilon_v^i$ is a 
singleton for every $v\in X$ and $i\in\{1,\dots,m(v)\}$, and $m(v)=1$ for every $v\in E^0\setminus X$. 
A maximal out-split at $\mathrm{reg}(E)$ is called the \emph{total out-split} of~$E$.
\end{definition}
\begin{definition}\label{MOHom}
Let  $E_O$ be a multi-out-split of a graph~$E$. We call the graph homomorphism $f_O\colon E_O\to E$,
defined by $f_O(v^i):=v$ and $f_O(e^i):=e$ for every 
$v^i\in E_O^0$ and every $e^i\in E_O^1$, the \emph{multi-out-split homomorphism}. 
\end{definition}
\begin{proposition}
For any multi-out-splt $E_O$ of $E$, its multi-out-split homomorphism is admissible.
\end{proposition}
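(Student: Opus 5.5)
The plan is to realise $f_O$ as a standard graph homomorphism and then invoke Proposition~\ref{class}. Put $\pi_0:=f_O|_{E_O^0}\colon E_O^0\to E^0$, $v^i\mapsto v$. The edge set $E_O^1=\{e^i\mid 1\le i\le m(t_E(e))\}$ should be identified with the fibre product
$$
E^1_\pi=\{(e,w)\in E^1\times E_O^0\mid t_E(e)=\pi_0(w)\}.
$$
Indeed, $\pi_0(w)=t_E(e)$ forces $w=t_E(e)^i$ with $1\le i\le m(t_E(e))$. Hence $e^i\mapsto (e,t_E(e)^i)$ is a bijection $\sigma\colon E_O^1\to E^1_\pi$.

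Under $\sigma$ the maps match up as required. The first projection corresponds to $f_O(e^i)=e$, and the second projection corresponds to $t_{E_O}(e^i)=t_E(e)^i$. The source map $s_\pi:=s_{E_O}\circ\sigma^{-1}$ sends $(e,t_E(e)^i)$ to $s_E(e)^j$, where $e\in\epsilon^j_{s_E(e)}$. It satisfies $\pi_0\circ s_\pi=s_E\circ\pi_1$, because both sides equal $s_E(e)$. Thus $f_O$ is isomorphic, via $(\id,\sigma)$, to a standard graph homomorphism $\pi$. Since admissibility is preserved under composing with graph isomorphisms, it suffices to check the two conditions of Proposition~\ref{class} for $\pi$.

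The first condition is properness of $\pi_0$. We have $\pi_0^{-1}(v)=\{v^1,\dots,v^{m(v)}\}$, which is finite because each $m(v)$ is a positive integer (the partition of $s_E^{-1}(v)$ is finite, and $m(v)=1$ for sinks).

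The second condition is the inclusion \eqref{inc}. Let $v\in\reg(E)$ and $1\le i\le m(v)$. The set $\epsilon_v^i$ is nonempty, so we can pick $e\in\epsilon^i_v$. Then $e^1\in E_O^1$, since $m(t_E(e))\ge1$, and $s_{E_O}(e^1)=v^i$. Hence $v^i\in s_\pi(E^1_\pi)$.

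The only point requiring care is the identification of $E_O^1$ with the pullback, namely that the index range $1\le i\le m(t_E(e))$ exactly matches the fibre $\pi_0^{-1}(t_E(e))$; everything else is immediate. As a fallback, one could verify properness, target bijectivity and regularity directly. Properness holds because the preimage $f_O^{-1}(e)$ has $m(t_E(e))$ elements. Target bijectivity is the bijection $e^i\mapsto t_E(e)^i$. Regularity holds because each $v^i$ over a regular $v$ emits the finitely many, and at least one, edges $\{e^k\mid e\in\epsilon^i_v\}$.
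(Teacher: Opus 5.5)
Your proof is correct, but your main route differs from the paper's.

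The paper checks the three admissibility conditions directly. Properness follows from finiteness of the $m(v)$. Target bijectivity is asserted ``by construction''. Regularity comes from the count $|s_{E_O}^{-1}(v^i)|=\sum_{e\in\epsilon_v^i}m(t_E(e))$, which is finite and nonzero. This is exactly your fallback.

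Your primary argument instead identifies $E_O^1$ with the fibre product of $E^1$ and $E_O^0$ over $E^0$. This makes $f_O$, up to the isomorphism $(\id,\sigma)$, a standard graph homomorphism, and you then apply the ``if'' direction of Proposition~\ref{class}. Your bookkeeping is right: the fibre $\pi_0^{-1}(t_E(e))=\{t_E(e)^1,\dots,t_E(e)^{m(t_E(e))}\}$ matches the index range of the edges $e^i$, and $s_\pi$ is compatible with $s_E$.

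What this route buys:
\begin{itemize}
\item Target bijectivity comes for free from the pullback structure, rather than being checked by hand.
\item Regularity reduces to the inclusion \eqref{inc}, i.e.\ to each $v^i$ over a regular $v$ emitting at least one edge. The finiteness of $s_{E_O}^{-1}(v^i)$ is supplied inside Proposition~\ref{class}, by properness of $\pi_1$ together with $|s_E^{-1}(v)|<\infty$.
\item It makes clear which hypotheses are used: finiteness of $m(v)$ gives properness, and nonemptiness of the blocks $\epsilon_v^i$ gives \eqref{inc}.
\item It matches the viewpoint of Proposition~\ref{MOGeneral}, where multi-out-splits are characterized among standard graph homomorphisms.
\end{itemize}

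The paper's direct verification is shorter and self-contained. Yours trades that for the extra identification step but gives a more structural explanation of why admissibility holds.
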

\begin{proof}
	Since every  $m(v)$ is finite, the graph homomorphism $f_O$ is proper. By construction,  the restricted target map 
	$t_{E_O}\colon f_O^{-1}(e)\to f_O^{-1}(t_E(e))$ is bijective for every $e\in E^1$. Finally, for any vertex $v\in \mathrm{reg}(E)$ and $1\leq i\leq m(v)$, 
	we have
\begin{equation}
|s_{E_O}^{-1}(v^i)|=\sum_{e\in \epsilon_v^i}m(t_{E_O}(e)).
\end{equation} 
	The sum is finite and not zero. Hence $v^i\in\mathrm{reg}(E_O)$.
\end{proof}
\begin{proposition}\label{MOGeneral}
	Let $\pi:E_\pi\to E$ be a surjective standard graph homomorphism. Then $\pi$ is isomorphic to a multi-out-split homomorphism over $E$ if and only if
	\begin{equation}\label{MOScondMulti}
		\forall\; v\in E^0\colon|\pi_0^{-1}(v)\cap (E_\pi^0\setminus \reg(E_\pi))|\le1,
	\end{equation}
	and there exists $s'\colon E^1\to E_\pi^0$ rendering the following diagram commutative:
	\begin{equation}\label{MOSdiagMulti}
		\begin{tikzcd}[ampersand replacement=\&]
			{E_\pi^1} \&\& {E_\pi^0} \\
			\\
			{E^1} \&\& {E^0}.
			\arrow["{s_\pi}", from=1-1, to=1-3]
			\arrow["{\mathrm{pr}_1}", from=1-1, to=3-1]
			\arrow["{\pi_0}", from=1-3, to=3-3]
			\arrow["{s'}", from=3-1, to=1-3]
			\arrow["{s_E}", from=3-1, to=3-3]
		\end{tikzcd}
	\end{equation}
\end{proposition}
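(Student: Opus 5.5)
We prove both implications, checking the two conditions on a multi-out-split homomorphism first and transporting them along isomorphisms afterwards.

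\emph{Necessity.} Since the conditions \eqref{MOScondMulti} and the existence of $s'$ in \eqref{MOSdiagMulti} are invariant under isomorphism of graph homomorphisms, it suffices to check them for $f_O\colon E_O\to E$. We first identify $f_O$ with a standard graph homomorphism. The pullback $\{(e,w)\mid t_E(e)=f_O(w)\}$ is in bijection with $E_O^1$ via $(e,t_E(e)^i)\mapsto e^i$, and the induced source map is $s_{E_O}$.

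\begin{itemize}
\item For \eqref{MOScondMulti}: among the $v^i$, $i\le m(v)$, a non-regular vertex is either a sink, which forces $m(v)=1$, or an infinite emitter. The latter happens exactly when $\epsilon_v^i$ is infinite, because every $e\in\epsilon_v^i$ contributes the finitely many edges $e^j$. By hypothesis at most one $\epsilon_v^i$ is infinite, so \eqref{MOScondMulti} holds.
\item For the lift: put $s'(e):=s_E(e)^j$ for $e\in\epsilon^j_{s_E(e)}$. Then $s_{E_O}(e^i)=s'(e)$, which is the upper triangle of \eqref{MOSdiagMulti}, and $f_O\circ s'=s_E$, which is the lower one.
\end{itemize}

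\emph{Sufficiency.} Given $\pi$ surjective with \eqref{MOScondMulti} and $s'$, we build a multi-out-split of $E$.

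\begin{enumerate}
\item For $v\in E^0$, enumerate $\pi_0^{-1}(v)=\{w_1,\dots,w_{m(v)}\}$. This set is nonempty by surjectivity. It is finite because an admissible $\pi$ has proper $\pi_0$ by Proposition~\ref{class}; I expect we need admissibility implicitly, or we can restrict to the proper case. If one fibre element is non-regular, index it first.
\item Define $\epsilon_v^i:=\{e\in s_E^{-1}(v)\mid s'(e)=w_i\}$. These sets partition $s_E^{-1}(v)$, since $\pi_0 s'=s_E$.
\item \textbf{Main obstacle: each $\epsilon_v^i$ is nonempty, and at most one is infinite.}
\begin{itemize}
\item Nonemptiness: if $w_i$ is not a sink, pick an edge $(e,x)$ with $s_\pi(e,x)=w_i$; then $s'(e)=w_i$ by the upper triangle. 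If $w_i$ is a sink, then $\epsilon_v^i$ is empty. In that case $v$ must be a sink (or we need $m(v)=1$), and \eqref{MOScondMulti} allows only one non-regular vertex in the fibre.
\item Sinks in the fibre: we must use that $s'(e)=w_i$ forces every $(e,x)$ to start at $w_i$. So if $v$ emits edges, every $e$ goes to some $w_j$, and the $w_j$ receiving no edges are sinks, at most one of them. If $v$ is itself a sink, all $w_j$ are sinks, giving $m(v)=1$. Making the paper's convention hold in the remaining case, where $v$ emits edges but some $w_j$ is a sink, is where care is needed; I expect to invoke regularity (the inclusion \eqref{inc}) or to treat this as excluded.
\item Finiteness: $\epsilon_v^i$ is infinite exactly when $w_i$ is an infinite emitter, because each $e$ yields finitely many edges $(e,x)$ by properness and at least one since $\pi_0$ is surjective. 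Then \eqref{MOScondMulti} gives at most one infinite part.
\end{itemize}
\item Define the isomorphism $E_O\to E_\pi$ by $v^i\mapsto w_i$ and $e^i\mapsto(e,w'_i)$, where $\pi_0^{-1}(t_E(e))=\{w'_1,\dots\}$ is the chosen enumeration. Then:
\begin{itemize}
\item targets match by the definition of the pullback;
\item sources match because $s_\pi(e,x)=s'(e)=w_j$ for $e\in\epsilon^j$;
\item the isomorphism commutes with $f_O$ and $\pi$.
\end{itemize}
\end{enumerate}
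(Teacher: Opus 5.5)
Your argument follows the paper's proof step for step: the parts $\epsilon_v^i:=(s')^{-1}(w_i)$, control of infinite parts via \eqref{MOScondMulti}, and the isomorphism $v^i\mapsto w_i$, $e^i\mapsto(e,w_i')$. The two points you leave open are defects of the statement, not of your reasoning. The paper's proof simply asserts that $\pi$ is finite-to-one, and it never checks that the parts $\epsilon_v^i$ are nonempty.

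Both ingredients of admissibility are genuinely needed. \emph{Properness:} let $v$ emit edges $e_1,e_2,\dots$ to a sink $u$, and take $\pi_0^{-1}(v)=\{v_1,v_2,\dots\}$, $\pi_0^{-1}(u)=\{u_1\}$, and $s'(e_j)=s_\pi(e_j,u_1):=v_j$. Every $v_j$ emits exactly one edge, so \eqref{MOScondMulti} and the diagram hold, yet the fibre over $v$ is infinite. \emph{The inclusion \eqref{inc}:} let $E$ consist of one edge $e$ from $v$ to $u$, and take $\pi_0^{-1}(v)=\{v_1,v_2\}$, $\pi_0^{-1}(u)=\{u_1\}$, and $s'(e)=s_\pi(e,u_1):=v_1$. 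Each fibre contains exactly one non-regular vertex ($v_2$, resp.\ $u_1$). However, the only multi-out-split of $E$ is $E$ itself, since $\{e\}$ admits only the trivial partition into nonempty parts. That graph has two vertices rather than three, so $\pi$ is not isomorphic to it.

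So the proposition must be read for admissible $\pi$, which is how it is applied in Proposition~\ref{MORef}. Under that hypothesis your open case closes as follows.
\begin{itemize}
\item If $v$ is a sink, then $m(v)=1$, as you say.
\item If $v\in\reg(E)$, the inclusion \eqref{inc} says each $w_i\in\pi_0^{-1}(v)$ is the source of some edge $(e,x)$. By your first bullet $e\in\epsilon_v^i$, so every part is nonempty, and all parts are finite.
\item If $v$ is an infinite emitter, you do not need \eqref{inc}. The fibre is finite, so some $\epsilon_v^{i_0}$ is infinite. Each $e\in\epsilon_v^{i_0}$ yields at least one edge $(e,x)$ by surjectivity, so $w_{i_0}$ is an infinite emitter. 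Then \eqref{MOScondMulti} forces every other $w_i$ to be regular, in particular not a sink. Hence all parts are nonempty and exactly one is infinite.
\end{itemize}
With this in place, your construction of the isomorphism goes through verbatim.
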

\begin{proof}
	First, suppose that $\pi\colon E_\pi\to E$ is isomorphic to some multi-out-split homomorphism $p=(p_0,p_1)\colon E_O\to E$. Let $\sigma=(\sigma_0,\sigma_1)\colon E_O\to E_\pi$ be the isomorphism such that~$p=\sigma\circ\pi$. The condition \eqref{MOScondMulti} thus translates to the same condition, but on the multi-out-split graph:
	\begin{equation}
		\forall\; v\in E^0\colon|p_0^{-1}(v)\cap (E_O^0\setminus \reg(E_O))|\le1.
	\end{equation}
	This is true because the fiber $p_0^{-1}(v)$ is larger than one only if $v$ emits at least one edge, and in the case of $v$ being an infinite emitter, $p_0^{-1}(v)$ will contain exactly one infinite emitter. As for the map $s'$, consider first the following map $s_O\colon E^1\to E_O^0$:
	\begin{equation}
		s_O(e) = e^i,
	\end{equation}
	where $i$ denotes the index such that $e\in \epsilon_v^i$ and $s_E^{-1}(s_E(e))=\epsilon_v^1\cup\dots\epsilon_v^{m(v)}$. By definition of $s_{E_O}$ and the graph homomorphism $p$ it is clear that it renders the following diagram commutative:
	\begin{equation*}
		\begin{tikzcd}
			{E_O^1} && {E_O^0} \\
			\\
			{E^1} && {E^0}
			\arrow["{s_{E_O}}", from=1-1, to=1-3]
			\arrow["{p_1}", from=1-1, to=3-1]
			\arrow["{p_0}", from=1-3, to=3-3]
			\arrow["{s_O}", from=3-1, to=1-3]
			\arrow["{s_E}", from=3-1, to=3-3]
		\end{tikzcd}
	\end{equation*}
	We define the map $s'=\sigma_0\circ s_O$. As $\sigma$ intertwines $\pi$ and $p$, then we get the following commutative diagram:
	\begin{equation*}
		\begin{tikzcd}
			{E_\pi^1} && {E_\pi^0} \\
			\\
			{E_O^1} && {E_O^0} \\
			\\
			{E^1} && {E^0}
			\arrow["{s_\pi}", from=1-1, to=1-3]
			\arrow["{\mathrm{pr}_1}"', curve={height=18pt}, from=1-1, to=5-1]
			\arrow["{\pi_0}", curve={height=-18pt}, from=1-3, to=5-3]
			\arrow["{\sigma_1}"', from=3-1, to=1-1]
			\arrow["{p_1}", from=3-1, to=5-1]
			\arrow["{\sigma_0}"', from=3-3, to=1-3]
			\arrow["{p_0}", from=3-3, to=5-3]
			\arrow["{s'}", from=5-1, to=1-3]
			\arrow["{s_O}"', from=5-1, to=3-3]
			\arrow["{s_E}", from=5-1, to=5-3]
		\end{tikzcd}
	\end{equation*}
	and the outer square is exactly the diagram \eqref{MOSdiagMulti} that we needed.
	
	Now suppose that $\pi$ satisfies both \eqref{MOScondMulti} and \eqref{MOSdiagMulti}. Observe that $s'\colon E^1\to E_\pi^0$ lets us define partitions for all $v\in E^0$. Namely, for a fixed $v\in E^0$ let $\pi_0^{-1}(v)=\{v^1,\dots,v^m\}$ ($\pi$ is finite-to-one, so we have finite fibers). Now define $\epsilon_v^i = (s')^{-1}(v^i)$. From commutativity of \eqref{MOSdiagMulti} we conclude that $\epsilon_v^i$ for $i=1,\dots,m$ form a partition of $s_E^{-1}(v)$. Our condition \eqref{MOScondMulti} implies that if $v$ were to be an infinite emitter, then $\pi_0^{-1}(v)$ contains exactly one infinite emitter, which means that exactly one $\epsilon_v^i$ is infinite. On the other hand, if $v$ was a sink, then so would be any $v^i\in \pi_0^{-1}(v)$, but our condition therefore implies that $|\pi_0^{-1}(v)|=1$. In summary, we can construct the multi-out-split $E_O$ of $E$ using our partitions $\epsilon_v^{i}$ for all $v\in E^0$ and this multi-out-split satisfies all requirements (the multi-out-split is non-trivial only at vertices that emit at least one edge, and for infinite emitters we have exactly one infinite emitter in the fiber of the canonical homomorphism $E_O\to E$). 
	
	Let us denote the canonical homomorphism by $p\colon E_O\to E$. To construct the isomorphism $\sigma\colon E_\pi\to E_O$ over $E$ we pick some ordering on fibers of $\pi$, i.e. for any $v\in E^0$ we denote $\pi_0^{-1}(v)=\{v^1,\dots,v^{m(v)}\}$ -- of course if $v$ is a sink, then $m(v)=1$. We set $\sigma_0(v^i)=v^i \in E_O$ and this is clearly a bijective map over $E^0$. As for $\sigma_1$ we set
	\begin{equation*}
		\sigma_1(e,v^i) = e^i\in E_O^1.
	\end{equation*}
	This is again clearly bijective and over $E^1$. The two bijections form a graph isomorphism $\sigma=(\sigma_0,\sigma_1)\colon E_\pi\cong E_O$ over $E$.
\end{proof}
In the literature there is a famous notion of an out-split (e.g., see \cite{eilers2019refined}), which is a multi-out-split $E_\pi\to E$ with respect to a map of vertices $E_\pi^0\to E_0$ that is the identity everywhere outside of one vertex. Usually one is also interested in speaking about maximal out-splits, which are out-splits, where for the unique vertex $v\in E^0$ that we split, we take the partition of $s_E^{-1}(v)$ into singletons. We denote a maximal out-split of $E$ at $v\in E^0$ by $E_{\hat{O}}$.

Multi-out-splits behave slightly better, as they are closed under, e.g., refinements:
\begin{proposition}\label{MORef}
	Let $E_\pi\to E\leftarrow E_\tau$ be two multi-out-splits, and let 
	 \mbox{$(\rho_0,\rho_1)\colon E_\pi\stackrel{\rho}{\to} E_\tau$} be a surjective graph 
	homomorhpism such that $\pi = \tau\circ\rho$. Then $\rho$ is isomorphic with a multi-out-split $\widetilde{\rho}\colon \widetilde{E_\pi}\to E_\tau$ over $E_\tau$:
	\begin{equation}\label{RefinDiag}
	\begin{tikzcd}
		{\widetilde{E_\pi}} && {E_\pi} \\
		& {E_\tau.}
		\arrow["\cong", from=1-1, to=1-3]
		\arrow["{\widetilde{\rho}}"', from=1-1, to=2-2]
		\arrow["\rho", from=1-3, to=2-2]
	\end{tikzcd}
	\end{equation}
\end{proposition}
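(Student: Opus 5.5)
The plan is to reduce to Proposition~\ref{MOGeneral} applied to $\rho\colon E_\pi\to E_\tau$. Without loss of generality I treat $\pi\colon E_\pi\to E$ and $\tau\colon E_\tau\to E$ as standard multi-out-split homomorphisms; after composing with isomorphisms, $E_\pi=E_{O}$ for partitions $\{\epsilon_v^i\}$ and $E_\tau=E_{O'}$ for partitions $\{\eta_v^j\}$. First I show that $\rho$ is admissible, so that, by Proposition~\ref{class}, it is isomorphic to a standard graph homomorphism over $E_\tau$. Properness is immediate because $\rho$ factors $\pi$ and $\pi$ is proper. For target bijectivity I use the pullback characterization from \cite[Proposition~2.3]{ht-25}. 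Since both $E_\pi^1$ and $E_\tau^1$ are pullbacks over $E^1\to E^0$, the pasting lemma for pullbacks shows that the square with $E_\pi^1\to E_\tau^1$ over $E_\pi^0\to E_\tau^0$ is a pullback. Concretely, $\rho_1(e^i)$ must be an edge over $e$ with target $\rho_0(t(e)^i)$, and such an edge is unique. Regularity follows from $\rho^{-1}(\reg(E_\tau))\subseteq\pi^{-1}(\reg(E))\subseteq\reg(E_\pi)$, using that a vertex of a multi-out-split lies over a regular vertex if and only if it is regular itself. The one exception is a sink, which is handled by the convention $m(v)=1$.

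Next I verify the two conditions of Proposition~\ref{MOGeneral} for $\rho$. For \eqref{MOScondMulti}, fix $w\in E_\tau^0$ over $v\in E^0$. The non-regular vertices of $\rho_0^{-1}(w)\subseteq\pi_0^{-1}(v)$ are among the non-regular vertices of $\pi_0^{-1}(v)$, and there is at most one of those because $\pi$ is a multi-out-split. For the lift $s'\colon E_\tau^1\to E_\pi^0$, I define $s'(e^j):=s_{E_\pi}(e^i)$ for any $i$. This is well defined because the source of $e^i$ in $E_\pi$ is $s_E(e)^k$ with $e\in\epsilon^k$, independently of $i$. Commutativity $\rho_0\circ s'=s_{E_\tau}$ then follows from $\rho$ being a graph homomorphism: $\rho_0(s_{E_\pi}(e^i))=s_{E_\tau}(\rho_1(e^i))=s_{E_\tau}(e^j)$, which again does not depend on $j$. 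The diagram \eqref{MOSdiagMulti} holds because $s_\pi=s'\circ\mathrm{pr}_1$ on the standard model.

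The main obstacle is surjectivity of $\rho$ together with matching $E_\pi$ to the standard model $(E_\tau)_{\rho}$ whose vertex map is $\rho_0$. I must make sure the isomorphism from Proposition~\ref{class} is over $E_\tau$ and carries the source map correctly, since Proposition~\ref{MOGeneral} is stated for standard homomorphisms. Surjectivity is assumed, so it suffices to transport $s'$ along the isomorphism $\alpha$ constructed in the proof of Proposition~\ref{class}. This uses $\mathrm{pr}_1=\rho_1\circ\sigma$, which I will check explicitly on edges $e^i$. Applying Proposition~\ref{MOGeneral} then gives $\widetilde\rho$ and the commuting triangle \eqref{RefinDiag}.
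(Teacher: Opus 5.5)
Your route matches the paper's: pasting of pullbacks for target bijectivity, the fibre inclusion $\rho_0^{-1}(w)\subseteq\pi_0^{-1}(\tau_0(w))$ for \eqref{MOScondMulti}, and the lift $s'=s'_\pi\circ\mathrm{pr}_1$ (your $s'(e^j):=s_{E_\pi}(e^i)$), followed by Proposition~\ref{MOGeneral}. The gap is in the regularity step.

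The fact you rely on is that a vertex of a multi-out-split is regular if and only if it lies over a regular vertex. This fails in the ``if'' direction. The trouble comes from infinite emitters, not sinks. Suppose $v\in E^0$ is an infinite emitter and $\tau$ splits $s_E^{-1}(v)$ into a finite piece $\eta_v^1$ and an infinite piece $\eta_v^2$. Then $v^1\in\reg(E_\tau)$, although $\tau_0(v^1)=v\notin\reg(E)$. So the inclusion $\rho_0^{-1}(\reg(E_\tau))\subseteq\pi_0^{-1}(\reg(E))$ is false in general, already for $E_\pi=E_\tau$ and $\rho=\mathrm{id}$. Your chain of inclusions therefore only covers the regular vertices of $E_\tau$ lying over $\reg(E)$. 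The remaining case $w\in\reg(E_\tau)\setminus\tau_0^{-1}(\reg(E))$ is exactly the one the paper treats separately, by a contradiction argument combining \eqref{MOScondMulti} for $\pi$ with the properness of $\rho$.

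The step is easy to repair, and the repair works uniformly for all $w\in\reg(E_\tau)$:
\begin{itemize}
\item Let $u=v^k\in\rho_0^{-1}(w)$ with $w=v^l$.
\item For $e\in\epsilon_v^k$, any edge $\rho_1(e^i)$ lies over $e$ because $\tau\circ\rho=\pi$.
\item It starts at $\rho_0(v^k)=v^l$, so $e\in\eta_v^l$. Hence $\epsilon_v^k\subseteq\eta_v^l$.
\item If $w$ is regular, then $\eta_v^l$ is finite. So $\epsilon_v^k$ is finite and nonempty, and $u$ emits finitely many but at least one edge, i.e.\ $u\in\reg(E_\pi)$.
\end{itemize}
Equivalently: $\rho_1$ maps $s_{E_\pi}^{-1}(u)$ into $s_{E_\tau}^{-1}(w)$ with finite fibres, and $u$ is not a sink because partition pieces are nonempty. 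With this in place of your inclusion chain, the rest of your argument goes through.
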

\begin{proof}
	For starters, observe that our $\rho$ fits into  the following commutative diagram:
	\begin{equation*}
		\begin{tikzcd}[ampersand replacement=\&]
		{E_\pi^1} \&\& {E_\pi^0} \\
		\\
		{E_\tau^1} \&\& {E_\tau^0} \\
		\\
		{E^1} \&\& {E^0}.
		\arrow["{t_\pi}", from=1-1, to=1-3]
		\arrow["{\rho_1}", from=1-1, to=3-1]
		\arrow["{\mathrm{pr_1}}"', curve={height=24pt}, from=1-1, to=5-1]
		\arrow["{\rho_0}", from=1-3, to=3-3]
		\arrow["{\pi_0}", curve={height=-24pt}, from=1-3, to=5-3]
		\arrow["{t_\tau}", from=3-1, to=3-3]
		\arrow["{\mathrm{pr}_1}", from=3-1, to=5-1]
		\arrow["{\tau_0}", from=3-3, to=5-3]
		\arrow["{t_E}", from=5-1, to=5-3]
		\end{tikzcd}
	\end{equation*}
	Note now that, as the lower square is a pullback and the outer square is a pullback, so is the upper square, i.e.\ $\rho$ is target-bijective. 
	Our map $\rho_0$ is surjective by assumption, and its surjectivity implies the surjectivity of $\rho_1$ by the 
	pullback property. Also, $\rho_0$ is automatically proper as $\pi_0$ is proper and, much as before, the properness of $\rho_0$ implies the 
	properness of $\rho_1$. As for the regularity of $\rho$, observe first that from the regularity of $\pi$ it follows that for any $v\in \tau_0^{-1}(\reg(E))$ 
	we have $\rho_0^{-1}(v)\subseteq \pi_0^{-1}(\reg(E)) \subseteq\reg(E_\pi)$. Now consider $v\in\reg (E_\tau)\setminus \tau_0^{-1}(\reg(E))$ and 
	suppose that $\rho_0^{-1}(v)$ is not contained in~$\reg(E_\pi)$. Pick a vertex $w\in \rho_0^{-1}(v)$ that is not regular and note that $w$ must be an 
	infintie emitter. Indeed, $\pi_0(w)=\tau_0(v)$ is an infinite emitter, so there is an infinite emitter in the preimage $\pi_0^{-1}(\pi_0(w))$ by the 
	properness and surjectivity of $\pi$. Next, $\pi_0^{-1} (\pi_0(w))$ contains at most one singular vertex by \eqref{MOScondMulti}, so $w$ cannot be a sink. 
	Finally, it follows from the properness of $\rho$ that $\rho(w)=v$ is an infinite emitter, which contradicts the assumption that $v$ is regular. 
	Summarizing, we have proven that $\rho$ is admissible.
	
	The next step is to find a map $s'\colon E_\tau^1\to E_\pi^0$ such that we obtain two commutative triangles:
	\begin{equation}\label{sourcelift}
		\begin{tikzcd}[ampersand replacement=\&]
			{\widetilde{E_\pi}^1} \&\& {E_\pi^0} \\
			\\
			{E_\tau^1} \&\& {E_\tau^0.}
			\arrow["{\widetilde{s_\pi}}", from=1-1, to=1-3]
			\arrow["{\widetilde{\mathrm{pr}}_1}", from=1-1, to=3-1]
			\arrow["{\rho_0}", from=1-3, to=3-3]
			\arrow["{s'}", from=3-1, to=1-3]
			\arrow["{s_\tau}", from=3-1, to=3-3]
		\end{tikzcd}
	\end{equation}
	Here $\widetilde{\mathrm{pr}}_1$ is the canonical projection onto the first component. Let $s_\pi'\colon E^1\to E_\pi^0$ be an analogous map but for 
	the multi-out-split $E_\pi\to E$. Consider the composition
	\begin{equation}
		s' := s_\pi'\circ\mathrm{pr}_1\colon E_\tau^1\longrightarrow E^1\longrightarrow E_\pi^0.
	\end{equation}
	Since $\mathrm{pr}_1\circ\widetilde{\mathrm{pr}}_1\colon \widetilde{E_\pi}^1\to E^1$ is $\mathrm{pr}_1\circ\sigma_1^{-1}$, we obtain
	\begin{equation}
		s'\circ\widetilde{\mathrm{pr}}_1= s_\pi'\circ\mathrm{pr}_1\circ\widetilde{\mathrm{pr}}_1 = s_\pi'\circ\mathrm{pr}_1\circ\sigma_1^{-1} = s_\pi\circ\sigma_1^{-1} = \widetilde{s_\pi}.
	\end{equation}
	Thus, we have shown the commutativity of the upper triangle in \eqref{sourcelift}. On the flip side, to see what happens with $\rho_0\circ s'$, let us investigate the composition 
	$\rho_0\circ s'\circ\widetilde{\mathrm{pr}}_1$. Plugging in the definitions quickly yields
	\begin{equation}
		\rho_0\circ s_\pi'\circ\mathrm{pr}_1\circ\widetilde{\mathrm{pr}}_1 = \rho_0\circ s_\pi'\circ \mathrm{pr}_1\circ\sigma_1^{-1} = \rho_0\circ \widetilde{s_\pi} = 
		s_\tau\circ\widetilde{\mathrm{pr}}_1.
	\end{equation}
	As $\widetilde{\mathrm{pr}}_1$ is surjective, we conclude the commutativity of the lower triangle in \eqref{sourcelift}.
	
	To end with, note that, for any vertex $v\in E_\tau^0$, we have
	\begin{equation}
		\rho_0^{-1}(v)\subseteq \rho_0^{-1}(\tau_0^{-1}(\tau_0(v)))=\pi_0^{-1}(\tau_0(v)).
	\end{equation}
	This yields the following estimate
	\begin{equation}
		|\rho_0^{-1}(v)\cap (E_\pi^0\setminus \reg(E_\pi))|\le |\pi_0^{-1}(\tau_0(v))\cap (E_\pi^0\setminus \reg(E_\pi))|\le 1.
	\end{equation}
	Here the last inequality follows from the fact $\pi$ is a multi-out-split. Consequently, \mbox{$\widetilde{\rho}\colon \widetilde{E_\pi}\to E_\tau$} is a 
	multi-out-split of $E_\tau$ by Proposition~\ref{MOGeneral}, i.e. there exists an isomorphism $\sigma\colon E_\pi\to \widetilde{E_\pi}$ is an isomorphism of graphs rendering the diagram \eqref{RefinDiag} commutative.
\end{proof}
(Multi-)out-splits are very useful as they contravariantly induce a $*$-homomorhpism that is an isomorphism:
\begin{lemma}\cite{eilers2019refined}  \label{lem:outsplit}
 Let $E_O$ be an out-split of $E$ at $w\in E^0$, and $f\colon E_O\to E$ be the admissible graph homomorphism of Definition~\ref{MOHom}. 
 Then the  inverse of the contravariantly induced $*$-homomorphism 
\begin{gather}\label{contraout}
f^*\colon C^*(E)\longrightarrow  C^*(E_O),\\
(f^*)(S_{v})=\begin{cases}S_{v^1} & v \in E^0\setminus\{w\}\\
\sum_{i=1 }^n S_{w^i} & v=w, \\
		\end{cases},\nonumber\\
(f^*)(S_{e})=\begin{cases} S_{e^1}  & e \in E^1\setminus t_{E}^{-1}(w)\\
\sum_{i=1}^n S_{e^i} & e  \in t_{E}^{-1}(w), \\
		\end{cases},\nonumber
\end{gather}
is given by
\begin{gather}
(f^*)^{-1}(S_{v^i})=\begin{cases}S_v & v^i \in E^0_O\setminus\{w^1,\ldots,w^n\}\\
\sum_{e \in \epsilon_i }S_e S_e^* & v^i=w^i,\;  |\epsilon_i|<\infty \\
S_{w}-\sum_{e\in s^{-1}_E(w)\setminus\epsilon_i}S_{e} S_{e}^*  & v^i=w^i,\; |\epsilon_i|=\infty\\
		\end{cases},\nonumber\\
(f^*)^{-1}(S_{e^i})=\begin{cases} S_{e}  & e^i  \in E^1_O\setminus t_{E_O}^{-1}(\{w^1,\ldots,w^n\})\\
S_{e}\sum_{e' \in \epsilon_i}S_{e'} S_{e'}^*  & e^i  \in t_{E_O}^{-1}(w^i),\,|\epsilon_i|<\infty\\
S_{e}-S_{e}\sum_{e'\in s^{-1}_E(w)\setminus\epsilon_i}S_{e'} S_{e'}^*  & e^i \in t_{E_O}^{-1}(w^i),\,|\epsilon_i|=\infty\\
		\end{cases}.\label{inverse}
\end{gather}
\end{lemma}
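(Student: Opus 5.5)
The strategy is the usual one for inverting a graph C*-algebra homomorphism. Define the candidate inverse $g\colon C^*(E_O)\to C^*(E)$ by the formulas \eqref{inverse}, using the universal property of $C^*(E_O)$. Then check that $g\circ f^*$ and $f^*\circ g$ are the identity on generators. It is cleaner to first recall from \cite{ht-24} that $f^*$ is a well-defined $*$-homomorphism, since $f$ is admissible. It is also useful to know that $f^*$ is an isomorphism: this is the classical out-split result of \cite{eilers2019refined}, and can alternatively be seen from the gauge-invariant uniqueness theorem, since $f^*$ is $U(1)$-equivariant and nonzero on every vertex projection. Given this, it suffices to prove one identity, namely $f^*\circ g=\id$ on the generators of $C^*(E_O)$. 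Bijectivity of $f^*$ then forces $g=(f^*)^{-1}$.

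\textbf{Step 1: $g$ is well defined.} We must check that the proposed elements form a Cuntz--Krieger $E_O$-family in $C^*(E)$.
\begin{itemize}
\item The $g(S_{v^i})$ are mutually orthogonal projections. For $w^i$ with $\epsilon_i$ finite, $\sum_{e\in\epsilon_i}S_eS_e^*$ is a projection. For the (at most one) infinite $\epsilon_i$, the element $S_w-\sum_{e\notin\epsilon_i}S_eS_e^*$ is a projection because the finitely many $S_eS_e^*$ are orthogonal subprojections of $S_w$. Orthogonality between different $i$ follows from orthogonality of the ranges $S_eS_e^*$ for distinct edges.
\item (CK1): $g(S_{e^i})^*g(S_{e^i})=g(S_{t(e)^i})$. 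This uses $S_e^*S_e=S_{t(e)}$ and the fact that $g(S_{w^i})\le S_w$. Distinct edges give $0$, because the images are partial isometries with orthogonal range projections or orthogonal source projections.
\item Source compatibility: $g(S_{s(e)^j})g(S_{e^i})=g(S_{e^i})$ for $e\in\epsilon_j$. When $s(e)=w$, this holds because $S_eS_e^*$ is a summand of $g(S_{w^j})$ in the finite case, and is not subtracted in the infinite case.
\item (CK2) at a regular vertex $v^i$ of $E_O$. For $v\neq w$ we need $S_v=\sum_{e\in s^{-1}(v)}\sum_i g(S_{e^i})$. The inner sum over $i$ equals $S_e\,g(\sum_i S_{t(e)^i})$, and $g(\sum_iS_{w^i})=S_w$ by (CK2) at $w$ in $E$, or directly in the infinite case. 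So this reduces to (CK2) in $E$. For $v^i=w^i$ with $\epsilon_i$ finite, the same computation yields $\sum_{e\in\epsilon_i}S_eS_e^*=g(S_{w^i})$.
\end{itemize}

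\textbf{Step 2: $f^*\circ g=\id$.} On $S_{v^i}$ with $v\ne w$ this is immediate. For $w^i$ with $\epsilon_i$ finite,
\[
f^*\Big(\sum_{e\in\epsilon_i}S_eS_e^*\Big)=\sum_{e\in\epsilon_i}\sum_{j,k}S_{e^j}S_{e^k}^*=\sum_{e\in\epsilon_i}\sum_j S_{e^j}S_{e^j}^*=S_{w^i}.
\]
The cross terms vanish because the $S_{t(e)^j}$ are orthogonal, and the last equality is (CK2) at $w^i$ in $E_O$. The infinite case follows by subtracting: $f^*(S_w)-\sum_{j\ne i}S_{w^j}=S_{w^i}$. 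The edge generators are handled by the same computation multiplied on the left by $f^*(S_e)=\sum_j S_{e^j}$, then cut down by the appropriate vertex projection.

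\textbf{Main obstacle.} The delicate point is the bookkeeping in Step 1, particularly the infinite-emitter case. There (CK2) is not imposed, and one must see that $S_w-\sum_{e\notin\epsilon_i}S_eS_e^*$ behaves correctly under the edge relations. I would treat this case separately throughout.
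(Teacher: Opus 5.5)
Your argument is correct. Note, though, that the paper contains no proof of this lemma: it is quoted from \cite{eilers2019refined}, and the isomorphism $C^*(E)\cong C^*(E_O)$ itself goes back to \cite{bp04}. So your proposal is a self-contained verification, not a variant of an argument in the text.

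Compared with the standard route (gauge-invariant uniqueness for injectivity of $f^*$, plus showing that the image of $f^*$ contains every generator of $C^*(E_O)$), your proof is over-determined. Write $X_i\in C^*(E)$ for the proposed value of $(f^*)^{-1}(S_{w^i})$, so that the proposed value for $S_{e^i}$ with $t_E(e)=w$ is $S_eX_i$. Your Step~2 is exactly the generation half of the standard argument. Once $f^*$ is known to be bijective, it already finishes the job: it shows $f^*(X_i)=S_{w^i}$ and $f^*(S_eX_i)=S_{e^i}$, and applying $(f^*)^{-1}$ gives the displayed formulas. Step~1 is then unnecessary, and so is the infinite-emitter bookkeeping you single out as the main obstacle.

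Conversely, if you keep Step~1, you need neither the citation nor gauge-invariant uniqueness. The other composition $g\circ f^*=\id$ is a two-line check on generators: $\sum_i g(S_{w^i})=S_w$ and $\sum_i S_e\,g(S_{w^i})=S_eS_w=S_e$. You already use both identities in your (CK2) verification. Also, gauge-invariant uniqueness by itself gives only injectivity of $f^*$, not bijectivity as you phrase it. That is enough here, since $f^*\circ g=\id$ together with injectivity forces $g\circ f^*=\id$.

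Two small points. First, in the (CK2) check you dropped the adjoints. The inner sum should be $\sum_i g(S_{e^i})g(S_{e^i})^*=S_e\,g\bigl(\sum_i S_{t(e)^i}\bigr)S_e^*$. Second, both Step~2 (which invokes (CK2) at $w^i$ in $E_O$) and the identity $\sum_i g(S_{w^i})=S_w$ tacitly assume that $w$ is not a sink. The statement implicitly assumes this too: for a sink the out-split is trivial, no $\epsilon_i$ is defined, and the empty-sum reading of the formula would give $0$ instead of $S_w$.
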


We are now ready for the first main application of our new category $\mathsf{REG}$ of regular extended path homomorphisms of graphs.
\begin{theorem}\label{thm:maxoutsplit}
Let $E_{\widehat{O}}$ be the maximal out-split of a graph $E$ at $w\in\mathrm{reg}(E)$ and let \mbox{$f^*\colon C^*(E)\to C^*(E_{\widehat{O}})$}
be the contravariantly induced $*$-homomorphism~\eqref{contraout}. 
Denote by $e_i$ the unique element of $\epsilon_i$, $i=1,\ldots,n$.
Then the maps $\varphi^0\colon E_{\widehat{O}}^0\to S(E)$ and \mbox{$\varphi^1\colon E_{\widehat{O}}^1\to S(E)$} given by 
\begin{gather*}
	\varphi^0(v^i) := \begin{cases}
		(v,v) & v^i \in E^0_{\widehat{O}}\setminus \{w^1,\ldots,w^n\}\\
		(e_i,e_i) & v^i=w^i
	\end{cases},\\
	\varphi^1(e^i) := \begin{cases}
		(e,t_E(e))  & e^i  \in E^1_{\widehat{O}}\setminus t_{E_{\widehat{O}}}^{-1}(\{w^1,\ldots,w^n\})\\
		(ee_i,e_i) &e^i\in  t_{E_{\widehat{O}}}^{-1}(w^i)
	\end{cases},
\end{gather*}
determine a regular extended path homomorphism of graphs $\varphi\colon S(E_{\widehat{O}})\longrightarrow S(E)$ 
that covariantly induces the inverse of the $*$-homomorphism~\eqref{contraout}: 
$\varphi_*=(f^*)^{-1}$.
\end{theorem}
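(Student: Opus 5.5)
First I will check that $(\varphi^0,\varphi^1)$ satisfies relations (1)--(4) listed after Definition~\ref{semi}. By universality, this yields an extended path homomorphism $\varphi\colon S(E_{\widehat{O}})\to S(E)$.

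For (1), the idempotents $(v,v)$ with $v\neq w$ and $(e_i,e_i)$ are handled separately. Distinct vertices multiply to $0$. We have $(v,v)(e_i,e_i)=0$ for $v\neq w$ because $s_E(e_i)=w$. Finally $(e_i,e_i)(e_j,e_j)=\delta_{ij}(e_i,e_i)$ because distinct edges are orthogonal.

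For (2) and (3), I will check the cases according to whether $s(e^i)$ and $t(e^i)$ lie in $\{w^1,\dots,w^n\}$.
\begin{itemize}
\item If $e^i\in t^{-1}(w^i)$, then $(ee_i,e_i)(e_i,e_i)=(ee_i,e_i)$ on the right.
\item If $s_E(e)=w$, then $e=e_j$ for the unique $j$ with $e\in\epsilon_j$, so $s_{E_{\widehat{O}}}(e^i)=w^j$. Then $(e_j,e_j)(e_j\cdots,\cdot)=(e_j\cdots,\cdot)$ on the left.
\item If both conditions hold (possibly $e=e_j$ is a loop at $w$), combine the two computations above.
\end{itemize}
Relation (3) follows because it is the adjoint of (2), and $*$ commutes with the products in $S(E)$.

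For (CK1), compute $\varphi^1(x)^*\varphi^1(y)$ for edges $x=e^i$ and $y=e'^j$. If $e\neq e'$, the product vanishes: both elements begin with distinct first edges $e$ and $e'$. If $e=e'$ with $t_E(e)=w$, we get $(e_i,ee_i)(ee_j,e_j)$. This equals $(e_i,e_i)$ when $i=j$ and $0$ otherwise, since $e_i$ and $e_j$ are orthogonal. Thus it equals $\delta_{ij}\varphi^0(w^i)$. The remaining case $t_E(e)\neq w$ is trivial.

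Next, regularity. I will use Lemma~\ref{lem:from.hom.to.reg} with $\Phi:=(f^*)^{-1}$ restricted to Leavitt path algebras. This is legitimate because the formulas of Lemma~\ref{lem:outsplit} land in $L_\mathbb{C}(E)$ and $f^*$ restricts to an isomorphism of the Leavitt algebras; alternatively, one can check Definition~\ref{def:regularity} directly through Proposition~\ref{rmk:regularity}.
\begin{itemize}
\item Since every $\epsilon_i$ is a singleton, the formulas give $\Phi(S_{w^i})=S_{e_i}S_{e_i}^*=S_{\varphi(w^i,w^i)}$ and $\Phi(S_{v^1})=S_v$.
\item For edges with $t_E(e)=w$ we have $\Phi(S_{e^i})=S_eS_{e_i}S_{e_i}^*=S_{(ee_i,e_i)}$. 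Hence $\Phi(S_{e^i}S_{e^i}^*)=S_{ee_i}S_{e_i}^*S_{e_i}S_{ee_i}^*=S_{(ee_i,ee_i)}=S_{\varphi(e^i,e^i)}$, using $S_{e_i}^*S_{e_i}=P_{t(e_i)}$.
\end{itemize}
This gives regularity. As a direct cross-check, the cover of $t(e_i)$ coming from $s^{-1}(w^i)$ is in bijection with $s_E^{-1}(t_E(e_i))$ (or with the splitting of it if $t_E(e_i)=w$), which is a cover.

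Finally, to show $\varphi_*=(f^*)^{-1}$, it suffices to compare them on the generators $S_{v^i}$ and $S_{e^i}$ of $C^*(E_{\widehat{O}})$. By \eqref{explicit}, $\varphi_*(S_{e^i})=S_{\varphi(e^i,t(e^i))}$. When $t(e^i)=w^i$ this is $\varphi(e^i)\varphi(w^i)=(ee_i,e_i)$, which gives $S_{ee_i}S_{e_i}^*=S_eS_{e_i}S_{e_i}^*$. This agrees with \eqref{inverse} for singleton $\epsilon_i$, and the other cases agree verbatim. The main obstacle is bookkeeping in the case of loops and edges at $w$, where $e=e_j$ and $t_E(e)=w$ simultaneously. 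I will handle it by writing $\varphi^1(e_j^i)=(e_je_i,e_i)$ and checking relations (2) and (CK1) explicitly there.
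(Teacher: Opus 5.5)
Your proof is correct and follows the paper. The checks of (1)--(4) and the comparison on generators are the same, and your regularity argument via Lemma~\ref{lem:from.hom.to.reg}, with $\Phi$ the restriction of $(f^*)^{-1}$, is exactly the alternative the paper gives in the Remark after the theorem; the paper's main proof instead verifies Definition~\ref{def:regularity} directly, case by case at $w^i$ and at $u^1$.

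Your side ``cross-check'' is misstated, though nothing in your argument depends on it. When $t_E(e_i)\neq w$, the set \eqref{cover} for $w^i$ comes from the single edge $e_i^1$ and is the trivial cover $\{t_E(e_i)\}$. Only when $e_i$ is a loop is it $s_E^{-1}(w)=\{e_1,\dots,e_n\}$.
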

\begin{proof} 
		In order for $\varphi^0\colon E_{\widehat{O}}^0\to S(E)$ and $\varphi^1\colon E_{\widehat{O}}^1\to S(E)$ to extend to a well-defined map 
		$\varphi\colon S(E_{\widehat{O}})\to S(E)$, we need to check the four axioms given below Definition~\ref{semi}. Let us start with the vertex axiom,
		followed by the two path relation axiom, and end with the first Cuntz--Krieger relation axiom.
		
\underline{Axiom (1):}		
Let $v^i,u^j\in E_{\widehat{O}}^0$. We need to check that
		\begin{equation}
			\varphi^0(v^i)\varphi^0(u^j) = \delta_{v^i,u^j}\varphi^0(v^i).
		\end{equation}
		We shall consider four cases. First, suppose that $i=1=j$ and  $u\neq w\neq v$. Then we have
		\begin{equation}
			\varphi^0(v^1)\varphi^0(u^1) = (v,v)(u,u) = \delta_{u,v}(v,v) = \delta_{u^1,v^1}\varphi^0(v).
			\end{equation}
			Next, assume that $i=1$ and $v\neq w$, but $u^j=w^j$.  We see that
			\begin{equation}
				\varphi^0(v^1)\varphi^0(w^j) = (v,v)(e_j,e_j) = 0.
			\end{equation}
			Dually, the same thing happens when $v^i=w^i$ and $j=1, u\neq w$. Lastly, suppose that $v^i=w^i$ and $u^j=w^j$. Then
			\begin{equation}
				\varphi^0(w^i)\varphi^0(w^j) = (e_i,e_i)(e_j,e_j) = \delta_{i,j}(e_i,e_i) = \delta_{w^i,w^j}\varphi^0(w_i).
			\end{equation}
			
			\underline{Axioms (2)-(3):}
			 Let $e^i\in E_{\widehat{O}}^1$. We need to show that
			\begin{equation}
				\varphi^0(s_{E_{\widehat{O}}}(e^i))\varphi^1(e^i) = \varphi^1(e^i) = \varphi^1(e^i)\varphi^0(t_{E_{\widehat{O}}}(e^i)).
			\end{equation}
			Here we also distinguish four cases. Let us start with the case where 
			\begin{equation}
			e^i\not\in s_{E_{\widehat{O}}}^{-1}(\{w^1,\ldots, w^n\})\cup t_{E_{\widehat{O}}}^{-1}(\{w^1,\ldots, w^n\}).
			\end{equation}
Then we have
			\begin{gather}
				\varphi^0(s_{E_{\widehat{O}}}(e^i))\varphi^1(e^i) = (s_{E}(e),s_{E}(e))(e,t_{E}(e)) = (e,t_{E}(e)) = \varphi^1(e^i),\nonumber\\
				\varphi^1(e^i)\varphi^0(t_{E_{\widehat{O}}}(e^i)) = (e,t_E(e))(t_E(e),t_E(e)) = (e,t_E(e)) = \varphi^1(e^i).
			\end{gather}
			Next, suppose that $s_{E_{\widehat{O}}}(e^i)=w^j$ for some $j$ but $t_{E_{\widehat{O}}}(e^i)\neq w^i$. 
			This means that $e^i = e_j^1$, and we can observe that
			\begin{gather}
				\varphi^0(w^j)\varphi^1(e_j^1) = (e_j,e_j)(e_j,t_{E}(e_j)) = (e_j,t_{E}(e_j)) = \varphi^1(e_j^1),\nonumber\\
				\varphi^1(e_j^1)\varphi^0(t_{E_{\widehat{O}}}(e_j^1)) = (e_j,t_E(e_j))(t_E(e_j),t_E(e_j)) = (e_j,t_E(e_j)) = \varphi^1(e_j^1).
			\end{gather}
			Now assume that $s_{E_{\widehat{O}}}(e^i) \neq w^j$ for any $j$ and $t_{E_{\widehat{O}}}(e^i)=w^i$. Then we have
			\begin{gather}
				\varphi^0(s_{E_{\widehat{O}}}(e^i))\varphi^1(e^i) = (s_E
					(e),s_{E}(e))(ee_i,e_i) = (ee_i,e_i) = \varphi^1(e^i),\nonumber\\
					\varphi^1(e^i)\varphi^0(w^i) = (ee_i,e_i)(e_i,e_i) = (ee_i,e_i) = \varphi^1(e^i).
			\end{gather}
			Lastly, suppose that $s_{E_{\widehat{O}}}(e^i)=w^j$ for some $j$ and $t_{E_{\widehat{O}}}(e^i)=w^i$. 
			This means that $e^i=e_j^i$, and our formula becomes
			\begin{gather}
				\varphi^0(w^j))\varphi^1(e_j^i) = (e_j,e_j)(e_je_i,e_i) = (e_je_i,e_i) = \varphi^1(e_j^i),\nonumber\\
				\varphi^1(e_j^i)\varphi^0(w^i) = (e_je_i,e_i)(e_i,e_i) = (e_je_i,e_i) = \varphi^1(e_j^i).
			\end{gather}
			
			\underline{Axiom (4):}
			We need to check that for any pair of edges $x^i,y^j\in E_{\widehat{O}}^1$ the following formula holds:
			\begin{equation}
				\varphi^1(x^i)^*\varphi^1(y^j) = \delta_{x^i,y^j}\varphi^0(t_{E_{\widehat{O}}}(x^i)).
			\end{equation}
			Here it suffices to check three cases. First assume that $i=1=j$ and $t_E(x)\neq w\neq t_E(y)$. Then our formula specializes to a trivial check:
			\begin{equation}
				\varphi^1(x^1)^*\varphi^1(y^1) = (t_E(x),x)(y,t_E(y)) = \delta_{x,y}(t_E(x),t_E(x)) = \delta_{x^1,y^1}\varphi^0(t_{E_{\widehat{O}}}(x^1)).
			\end{equation}
			Next, consider $t_{E_{\widehat{O}}}(x^i)=w^i$ and $t_{E_{\widehat{O}}}(y^j)\neq w^j$. Then we obtain
			\begin{equation}
				\varphi^1(x^i)^*\varphi^1(y^j) = (e_i,xe_i)(y,t_E(y)).
			\end{equation}
			This product needs to vanish. 
			If $y$ were comparable to $xe_i$, then this would mean that $y=x$, which certainly is not the case as $y$ and $x$ 
			have different targets by assumption. By taking the $*$ on both sides of the equation above we see that the product 
			$\varphi^1(y^i)^*\varphi^1(x^j)$ also vanishes, thus taking care of the dual case. Lastly, consider the case $t_{E_{\widehat{O}}}(x^i)=w^i$ 
			and $t_{E_{\widehat{O}}}(y^j)=w^j$. Then we get
			\begin{equation}
				\varphi^1(x^i)^*\varphi^1(y^j) = (e_i,xe_i)(ye_j,e_j).
			\end{equation}
			Observe that $xe_i$ and $ye_j$ are comparable if and only if they are equal, 
			so the product above does not vanish if and only if $x=y$ and $i=j$. 
			Hence, we get
			\begin{equation}
				(e_i,xe_i)(ye_j,e_j) = \delta_{x^i,y^j}(e_i,e_i) = \delta_{x^i,y^j}\varphi^0(w^i,w^i).
			\end{equation}
			
			Summarizing, we have shown that our maps $\varphi^0\colon E_{\widehat{O}}^0\to S(E)$ and $\varphi^1\colon E_{\widehat{O}}^1\to S_E$ 
			lift to an extended path homomorphism 
			$\varphi\colon S({E_{\widehat{O}}})\to S(E)$. 
			We still need to show that $\varphi$ is regular. To this end, first we take a vertex $w^i$, which is regular for every~$i$. Now, 
			let $t\in S(E)\setminus\{0\}$ 
			be such that $t\le \varphi(w^i,w^i) = (e_i,e_i)$. This means that $t=(\alpha,\alpha)$ is an idempotent and 
			the path $\alpha\in \mathrm{FP}(E)$ starts 
			with the edge $e_i$. If $e_i$ is not a loop, i.e.\ $t_E(e_i)\neq w$, then picking $e_i^1\in s_{E_{\widehat{O}}}^{-1}(w^i)$ we see that
			$\varphi(e_i^1,e_i^1) = (e_i,e_i)$.
			And, as $e_i$ is comparable to $\alpha$, we also see that $t\varphi(e_i^1,e_i^1)\neq0$. 
			Next, consider the case of $t_E(e_i)=w$ and note that $e_i$ is now a loop. If $\alpha=e_i$ or $\alpha$ starts with $e_ie_i$, 
			we can pick the loop $e_i^i\in s_{E_{\widehat{O}}}^{-1}(w^i)$ at $w_i$ to obtain
			$\varphi(e_i^i,e_i^i) = (e_ie_i,e_ie_i)$,
			which is clearly comparable to $\alpha$. Otherwise $\alpha$ starts with $e_ie_j$ for some $j\neq i$. In this case, we take the edge 
			$e_i^j\in s_{E_{\widehat{O}}}^{-1}(w^i)$. For this edge, 
			we obtain $\varphi(e_i^j,e_i^j) = (e_ie_j,e_ie_j)$ and $e_ie_j$ is comparable to $\alpha$.  
			Hence, we are done with the case of the regular vertex being some $w^i$. 
			
			It remains to consider
			$u^1\in\mathrm{reg}(E_{\widehat{O}})\setminus\{w^1,\ldots,w^n\}$.
			Let $\alpha\in\mathrm{FP}(E)$ be a path that starts at~$u$, and let $e\in s_E^{-1}(u)$. If $\alpha=u$ or $\alpha=e$, and
			then $\varphi(e^1,e^1)\leq (\alpha,\alpha)$. 
			Suppose now that $\alpha$ has length at least 2 and that $e$ is the first edge of~$\alpha$. If $t_E(e)\neq w$, 
			then the edge $e^1\in s_{E_{\widehat{O}}}^{-1}(u^1)$ yields
			$\varphi(e^1,e^1) = (e,e)\geq (\alpha,\alpha)$. On the other hand, if $t_E(e)=w$, 
			then we  need to look at the next edge $e'$ in~$\alpha$. 
			Note that the edge $e'$ necessarily is some $e_j$ because \mbox{$t_E(e)=w$}. 
			In this case, consider the edge $e^j\in s_{E_{\widehat{O}}}^{-1}(u^1)$. Then we have
			$\varphi(e^j,e^j) = (ee_j,ee_j)\geq (\alpha,\alpha)$. Therefore, $\varphi$ is regular.
			Finally, it is clear that this $\varphi$ covariantly induces the $*$-homomorphism~\eqref{inverse}.
\end{proof}
\begin{remark}
Using Lemma~\ref{lem:from.hom.to.reg}, one obtains an alternative proof that the extended path homomorphism 
			$\varphi\colon S({E_{\widehat{O}}})\to S(E)$ is regular. Indeed, remembering that $L_{\mathbb{C}}(E)\subseteq C^*(E)$
			for any graph $E$ and
			taking $\Phi$ in Lemma~\ref{lem:from.hom.to.reg}
			to be the restriction of $(f^*)^{-1}$ in \eqref{inverse} to the Leavitt subspaces, we conclude the regularity of~$\varphi$.
\end{remark}
Any out-split $E_O$ at a regular vertex can be refined to a maximal out-split $E_{\hat{O}}$ and by Proposition~\ref{MORef} we know that the collapse of an out-split $E_{\hat{O}}\to E_O$ is a multi-out-split (in particular it is admissible).
Combining this fact
with the above theorem, we conclude:
\begin{corollary}
Let $E_O$ be an out-split of $E$ at a regular vertex $w\in\mathrm{reg}(E)$,  let $E_{\widehat{O}}$ be the maximal out-split of $E$ at the same vertex,
and let $f^*\colon C^*(E)\to C^*(E_O)$ be the contraviariantly induced 
$*$-isomomorphism~$\eqref{contraout}$. 
Then 
\begin{equation}
	(f^*)^{-1} = \varphi_*\circ g^*,\quad g\colon E_{\widehat{O}}\longrightarrow E_O,\quad\varphi\colon E_{\widehat{O}}\longrightarrow E,
\end{equation}
where $g$ is a multi-out-split homomorphism and $\varphi$ is a regular extended path homomorphism
of Theorem~\ref{thm:maxoutsplit}.
\end{corollary}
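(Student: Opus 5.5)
The plan is to factor $f$ through the maximal out-split and then apply contravariant functoriality together with Theorem~\ref{thm:maxoutsplit}.

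\textbf{Step 1: the factorization.} Let $h\colon E_{\widehat{O}}\to E$ be the maximal out-split homomorphism at $w$, and $f\colon E_O\to E$ the out-split homomorphism. The partition $\epsilon_1\cup\dots\cup\epsilon_m$ of $s_E^{-1}(w)$ defining $E_O$ is coarsened by the singleton partition defining $E_{\widehat{O}}$. So there is a surjective graph homomorphism $g\colon E_{\widehat{O}}\to E_O$ defined as follows:
\begin{itemize}
\item $g(w^i):=w^j$ whenever $e_i\in\epsilon_j$;
\item $g(v^1):=v^1$ for $v\neq w$;
\item on edges, $g(e^i):=e^{j}$ with $j$ determined in the same way when $t_E(e)=w$, and $g(e^1):=e^1$ otherwise.
\end{itemize}
I will check that $g$ respects sources and targets, directly from the definitions of $s_{E_O}$ and $t_{E_O}$, and that $h=f\circ g$. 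Proposition~\ref{MORef} then shows that $g$ is isomorphic to a multi-out-split homomorphism over $E_O$. In particular $g$ is admissible, so it contravariantly induces a $*$-homomorphism $g^*\colon C^*(E_O)\to C^*(E_{\widehat{O}})$.

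\textbf{Step 2: contravariant functoriality.} The contravariant functor of \cite{ht-24} gives $h^*=(f\circ g)^*=g^*\circ f^*$. By Lemma~\ref{lem:outsplit}, $f^*$ and $h^*$ are both $*$-isomorphisms, so $g^*=h^*\circ(f^*)^{-1}$ is an isomorphism as well.

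\textbf{Step 3: combining with the theorem.} Theorem~\ref{thm:maxoutsplit} gives $\varphi_*=(h^*)^{-1}$. Therefore
$$
\varphi_*\circ g^*=(h^*)^{-1}\circ h^*\circ(f^*)^{-1}=(f^*)^{-1},
$$
which is the claimed formula.

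\textbf{Main obstacle.} The delicate point is Step 1. I must verify that the hypotheses of Proposition~\ref{MORef} really hold, namely:
\begin{itemize}
\item $g$ is a surjective graph homomorphism with $h=f\circ g$;
\item both $h$ and $f$ are standard multi-out-splits, which holds up to the isomorphism of Proposition~\ref{MOGeneral}.
\end{itemize}
I also need the contravariant functor to respect composition on the nose, not merely up to isomorphism. If Proposition~\ref{MORef} turns out to be awkward to apply, the fallback is to check admissibility of $g$ directly:
\begin{itemize}
\item properness is clear, since the fibres are finite;
\item target bijectivity holds because $e^i$ lies over $g(e^i)$ exactly as $w^i$ lies over $g(w^i)$;
\item regularity holds because every $w^i$ is regular, as $w$ is regular, and every other vertex keeps its emitters.
\end{itemize}
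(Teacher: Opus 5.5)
Your proposal is correct and follows the paper's proof essentially step for step. You factor the maximal out-split collapse as $h=f\circ g$, invoke Proposition~\ref{MORef} to see that $g$ is a multi-out-split (hence admissible) homomorphism, use $h^*=g^*\circ f^*$ together with the invertibility from Lemma~\ref{lem:outsplit}, and conclude via $\varphi_*=(h^*)^{-1}$ from Theorem~\ref{thm:maxoutsplit}. Your explicit formula for $g$ and your direct admissibility check are a useful addition, since the paper leaves $g$ implicit. The invertibility of $g^*$ that you derive is harmless but not needed, because $(h^*)^{-1}\circ g^*$ is already a left inverse of the invertible $f^*$.
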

\begin{proof}
Let $\widehat{f}\colon E_{\widehat{O}}\to E$ be the multi-out-split homomorphism corresponding to the maximal out-split at the vertex $w$ and $g\colon E_{\widehat{O}}\to E_O$ be the multi-out-split homomorphism that factors $\widehat{f}$ through $f$. Note that $g$ is a multi-out-split homomorphism by Proposition~\ref{MORef}. Then the diagram
\begin{equation}
\begin{tikzcd}[ampersand replacement=\&]\label{cor:outsplitdiag}
	E \&\& {E_O} \\
	\& {E_{\widehat{O}}}
	\arrow["f"', from=1-3, to=1-1]
	\arrow["{\widehat{f}}", from=2-2, to=1-1]
	\arrow["g"', from=2-2, to=1-3]
\end{tikzcd}
\end{equation}
commutes, so $\widehat{f}^*=g^*\circ f^*$. On the other hand, both $\widehat{f}^*$ and $f^*$ are invertible by Lemma~\ref{lem:outsplit},
so $(f^*)^{-1} = (\widehat{f}^*)^{-1}\circ g^*$. Finally, as
$\varphi_*=(\widehat{f}^*)^{-1}$ by Theorem~\ref{thm:maxoutsplit}, we conclude the desired 
$(f^*)^{-1}=\varphi_*\circ g^*$.
\end{proof}
\begin{remark}
Observe that repeating the above proof for an arbitrary multi-out-split $E_\pi$ of $E$  yields,
by Lemma~\ref{lem:outsplit}, the invertibility of 
$$ 
g^*\colon C^*(E_O)\longrightarrow C^*(E_{O’}),
$$
 which can be viewed as a generalization of the invertibility claim in
Lemma~\ref{lem:outsplit}. This is not surprising as one can obtain a multi-out-split $E_{\pi}$ by carefully out-splitting~$E$ vertex by vertex.
We hope that the above corollary can be extended to general out-splits, including at infinite emitters, by defining and applying
extended relation morphisms~\cite{cdh25}.
\end{remark}

\subsection{Shift}

\begin{definition} [\cite{ABRAMS20081983}] 
Let $E$ be a graph and $v,w \in E^0$, $v$ a regular vertex such that there exists an injection 
$\theta: s_E^{-1}(v)\to s_E^{-1}(w)$ with the property $t_E(e)=t_E(\theta(e))$ for $e \in s_E^{-1}(v)$.  Let $E_{(v,w)}$ denote the graph:
\begin{gather*}
E_{(v,w)}^0= E^0\\
E_{(v,w)}^1= E^1 \setminus \theta(s_E^{-1}(v)) \sqcup \{ x_{w,v}\}\\
s_{E_{(v,w)}}(e)=\begin{cases} s_E(e) & e \in E^1 \setminus \theta(s_E^{-1}(v))  \\
					w & e= x_{w,v}\ \\
		\end{cases}\\
t_{E_{(v,w)}}(e)=\begin{cases} t_E(e) & e \in E^1 \setminus \theta(s_E^{-1}(v))  \\
					v & e= x_{w,v}\ \\
		\end{cases}
\end{gather*}
We say that $E_{(v,w)}$ is formed by performing a \emph{shift} from $w$ to $v$ on $E$.
\end{definition}
\begin{proposition}\label{shiftcov}
Let $E$ be a graph, and $E_{(v,w)}$ be its shift. Then the following formulas define a regular monotonic path homomorphism of graphs 
$f\colon E\to E_{(v,w)}$:
\begin{gather*}
f(v)=v \text{ for } v \in E^0\\
f(e)=\begin{cases} e & \text{ for } \in E^1 \setminus \theta(s_E^{-1}(v))\\
			x_{w,v}\theta^{-1}(e) & \text{ for } e \in \theta(s_E^{-1}(v))\\
\end{cases}
\end{gather*}
\end{proposition}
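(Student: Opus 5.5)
The plan is to verify, in order, that $f$ is a path homomorphism, that it is injective on vertices, that it is monotonic, and finally that it is regular in the sense of Definition~\ref{regularity}. For the first step we define $f$ on edges as stated and extend it multiplicatively to $\mathrm{FP}(E)$. We must check that $f$ respects sources and targets. For $e\notin\theta(s_E^{-1}(v))$ nothing changes. For $e=\theta(e')$ with $e'\in s_E^{-1}(v)$, the path $x_{w,v}e'$ is well defined because $t(x_{w,v})=v=s_E(e')$. It starts at $w=s_E(\theta(e'))$ and ends at $t_E(e')=t_E(\theta(e'))$. Here $e'$ is an edge of $E_{(v,w)}$: indeed $e'$ starts at $v$, and if it lay in $\theta(s_E^{-1}(v))$ it would start at $w$, so we must treat the case $v=w$ separately or note that the definition implicitly requires $v\neq w$. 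With these checks, multiplicativity on concatenations is automatic. Injectivity on vertices is trivial since $f$ is the identity on $E^0$.

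For monotonicity we assume $f(e)\preceq f(e')$ and argue by the first edge of the image. Images of edges outside $\theta(s_E^{-1}(v))$ are single edges different from $x_{w,v}$. Images of edges in $\theta(s_E^{-1}(v))$ begin with $x_{w,v}$. So either both $e,e'$ lie outside $\theta(s_E^{-1}(v))$, which gives $e=e'$, or both lie inside. In the latter case $x_{w,v}\theta^{-1}(e)\preceq x_{w,v}\theta^{-1}(e')$ with equal lengths forces $\theta^{-1}(e)=\theta^{-1}(e')$, and hence $e=e'$ by injectivity of $\theta$.

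For regularity, the most convenient tool is Lemma~\ref{lem:regular.vs.cover}: for each $u\in\reg(E)$, we show that $f(s_E^{-1}(u))$ is an orthogonal cover of $u$ by positive-length paths, with $f$ injective on $s_E^{-1}(u)$. We split into cases.
\begin{itemize}
\item If $u\neq w$, then $s_E^{-1}(u)=s_{E_{(v,w)}}^{-1}(u)$ and $f$ is the identity there, so the cover claim is immediate. This case needs $u\neq w$ and $u$ unaffected, which holds because removed edges all start at $w$.
\item If $u=w$, write $s_E^{-1}(w)=A\sqcup\theta(s_E^{-1}(v))$. Then $f(s_E^{-1}(w))=A\cup\{x_{w,v}e'\mid e'\in s_E^{-1}(v)\}$, and $s_{E_{(v,w)}}^{-1}(w)=A\sqcup\{x_{w,v}\}$. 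Orthogonality and injectivity are clear. For the cover property, take a path $\gamma$ from $w$ in $E_{(v,w)}$. If it starts with an edge in $A$, it is comparable with that edge. If it is $w$ or $x_{w,v}$, it is a prefix of some $x_{w,v}e'$; this uses that $v$ is regular, hence not a sink, so $s_E^{-1}(v)\neq\emptyset$. Otherwise it starts with $x_{w,v}e''$, where $e''$ is an edge of $E_{(v,w)}$ from $v$, i.e.\ $e''\in s_E^{-1}(v)$ (again using $v\neq w$), so it is comparable with $f(\theta(e''))$.
\end{itemize}

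Regular vertices that are $0$-regular fall under condition~(1), so condition~(2) is not needed. The main obstacle is the bookkeeping at $u=w$, namely confirming that the edges of $E_{(v,w)}$ emitted from $v$ are exactly $s_E^{-1}(v)$. This requires $v\neq w$, and I would state that assumption explicitly at the start.
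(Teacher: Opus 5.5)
Your proof is correct. For monotonicity it follows the paper's argument; your comparison of first edges is, if anything, slightly cleaner.

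For regularity you take a genuinely different route. The paper never examines covers. It passes to $\varphi_f\colon S(E)\to S(E_{(v,w)})$ via Proposition~\ref{equivregularity}, and then applies Lemma~\ref{lem:from.hom.to.reg}. As $\Phi$ it uses the $*$-homomorphism $f_*\colon C^*(E)\to C^*(E_{(v,w)})$ already known from \cite{ABRAMS20081983}, which sends $S_v\mapsto S_{\varphi_f(v,v)}$ and $S_e\mapsto S_{\varphi_f(e,t_E(e))}$. In this way the Cuntz--Krieger relations at regular vertices of $E$ are transported into the regularity equation of Lemma~\ref{lem:equivalence.regularity}.

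You instead verify directly, via Lemma~\ref{lem:regular.vs.cover}, that $f(s_E^{-1}(u))$ is an orthogonal cover of $u$ by positive-length paths. The only nontrivial case is $u=w$.

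What each approach buys:
\begin{itemize}
\item The paper's argument is shorter and illustrates the algebra-to-combinatorics direction of the theory. However, it outsources the real content to the external fact that the shifted formulas satisfy the Cuntz--Krieger relations. Checking that relation at $w$ is essentially your two-level cover computation.
\item Your argument is self-contained and purely combinatorial. It shows exactly where the hypotheses enter. Regularity of $v$ is what makes $x_{w,v}$ covered. The condition $v\neq w$ ensures that the edges of $E_{(v,w)}$ emitted by $v$ are exactly $s_E^{-1}(v)$, which is also what makes $f$ well defined at all.
\end{itemize}

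The paper leaves $v\neq w$ implicit, and you are right to state it. If $v=w$, then $\theta$ is an injection of the finite set $s_E^{-1}(v)$ into itself, hence a bijection. All these edges are then deleted, so $x_{v,v}\theta^{-1}(e)$ is not a path in $E_{(v,v)}$.
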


\begin{proof}
First observe that $f$ is monotonic: Suppose that we have edges $e,e'\in E^1$ such that $f(e)\preceq f(e')$. Clearly if both 
$e,e'\notin \theta(s_E^{-1}(v))$, then we see that our inequality implies that $e=e'$. Suppose that $e = \theta(y)$ for some edge $y$ that starts at $v$. 
Then
\begin{equation}
	f(\theta(y)) = x_{w,v}y \preceq f(e').
\end{equation}
Note that $f(e')$ has length at least $1$ if and only if $e'=\theta(y')$ in which case we get
\begin{equation}
	x_{w,v}y\preceq x_{w,v}y'
\end{equation}
and this inequality is possible only if $y=y'$, i.e. $e=\theta(y)=\theta(y')=e'$. Next, consider the case $e'=\theta(y')$ for some $y'$. Then the inequality
\begin{equation}
	f(e)\preceq f(\theta(y')) = x_{w,v}y'
\end{equation}
is possible only if $f(e)=x_{w,v}$ or $f(e)=x_{w,v}y'$. The former case is not possible and the latter case implies $e=\theta(y')=e'$. 

Clearly $f$ is injective on vertices, hence it is a morphism in the category $\mathsf{MIPG}$. To prove regularity of $f$, using 
Proposition~\ref{equivregularity}, we can look at regularity on the level of graph inverse semi-groups, i.e. regularity of the induced map 
$\varphi_f\colon S(E)\to S(E_{(v,w)})$. Here we can utilize Lemma~\ref{lem:from.hom.to.reg} -- note that in \cite{ABRAMS20081983} one can find a 
formula for a homomorphism $f_*\colon C^*(E)\to C^*(E_{(v,w)})$ such that
\begin{gather}
	f_*(S_v) = S_v = S_{\varphi_f(v,v)},\\
	f_*(S_e) = \begin{cases}
		S_e  & e\notin\theta(s_E^{-1}(v)),\\
		S_{x_{w,v}}S_{\theta^{-1}(e)} & e\in\theta(s_E^{-1}(v)),
	\end{cases} = S_{\varphi_f(e,t_E(e))}.
\end{gather}
Thus $\varphi_f$ is regular, which implies that so is $f$.
\end{proof}
\begin{lemma}\cite{ABRAMS20081983}
The  $*$-homomorphism $f_*\colon C^*(E) \to  C^*(E_{(v,w)})$ induced by the formulas in Proposition~\ref{shiftcov} is given by
$$
f_*(S_e)=\begin{cases}
S_e &\text{ for } e \in E^0 \sqcup (E^1  \setminus  \theta(s_E^{-1}(v))) \\
S_{x_{w,v}}S_{\theta^{-1}(e)} &\text{ for } e \in \theta(s_E^{-1}(v))
\end{cases}
$$
and its inverse is determined by
\begin{equation}\label{shiftinverse}
(f_*)^{-1}(S_e)=\begin{cases}
S_e &\text{ for } e \neq  x_{w,v} \\
\sum_{ y\in s^{-1}_E(v)} S_{\theta(y)}S_y^*  &\text{ for } e =  x_{w,v} .
\end{cases}
\end{equation}
\end{lemma}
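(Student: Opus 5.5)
The plan is to define a $*$-homomorphism $g\colon C^*(E_{(v,w)})\to C^*(E)$ by the formulas \eqref{shiftinverse}, extended by $g(S_u)=S_u$ for vertices $u$, and then to check $g\circ f_*=\id$ and $f_*\circ g=\id$ on generators. That $f_*$ is well defined is already known: it is the covariant image of the regular monotonic path homomorphism of Proposition~\ref{shiftcov}, under the functor from $\mathsf{REG}$ to $\mathsf{C^*}\text{-}\mathsf{Alg}$. So only $g$ needs existence, and for that I will show that the family
\[
\{S_u\}_{u\in E^0},\quad \{S_e\}_{e\in E^1\setminus\theta(s_E^{-1}(v))},\quad T:=\sum_{y\in s_E^{-1}(v)}S_{\theta(y)}S_y^*
\]
is a Cuntz--Krieger $E_{(v,w)}$-family in $C^*(E)$. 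The universal property then yields $g$.

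First, the Cuntz--Krieger relations. Because $y$ and $\theta(y)$ share their targets, $T^*T=\sum_{y,y'}S_yS_{\theta(y)}^*S_{\theta(y')}S_{y'}^*=\sum_y S_yS_y^*=S_v$. This uses injectivity of $\theta$ and the regularity of $v$, which makes the sum finite and turns it into $S_v$; note $S_v=S_{t(x_{w,v})}$. The source relation $S_wT=T$ is clear. Orthogonality between $T$ and the retained edges $e$ follows from $S_e^*S_{\theta(y)}=0$, since $e\neq\theta(y)$. For the (CK2) relation at a regular vertex $u\neq w$ nothing changes. At $w$ we compute
\[
TT^*=\sum_y S_{\theta(y)}S_y^*S_yS_{\theta(y)}^*=\sum_y S_{\theta(y)}S_{\theta(y)}^*,
\]
so $TT^*$ together with the retained edges at $w$ recovers $\sum_{e\in s_E^{-1}(w)}S_eS_e^*=S_w$. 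The same computation also covers the infinite-emitter inequality at $w$. Regularity of $w$ in $E_{(v,w)}$ matches regularity in $E$, since only finitely many edges are exchanged for a single one.

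Second, the compositions. We have
\[
g(f_*(S_{\theta(y')}))=T\,S_{y'}=\sum_y S_{\theta(y)}S_y^*S_{y'}=S_{\theta(y')},
\]
and the remaining generators are fixed, so $g\circ f_*=\id$. In the other direction, $f_*(T)=\sum_y S_{x_{w,v}}S_yS_y^*=S_{x_{w,v}}S_v=S_{x_{w,v}}$, by (CK2) at $v$ in $E_{(v,w)}$. This step requires that $v$ remain regular there with the same out-edges, which holds because only edges out of $w$ are removed. Hence $f_*\circ g=\id$.

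The step I expect to be the main obstacle is the (CK2) bookkeeping at $w$ when $w$ is an infinite emitter or when $v=w$; the latter case would need separate care, since $\theta$ could then move edges out of $v$ itself. I will handle these by verifying the projection identities directly, rather than through the regular-vertex sum.
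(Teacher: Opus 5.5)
The paper does not prove this lemma; it quotes it from \cite{ABRAMS20081983}. The nearest argument in the paper is Theorem~\ref{thm:shiftinv}, which treats only $s_E^{-1}(v)=\{y\}$. There, the map $C^*(E_{(v,w)})\to C^*(E)$ comes for free from the covariant functor applied to $v\mapsto(y,y)$, $x_{w,v}\mapsto(\theta(y),y)$. The fact that this map equals $(f_*)^{-1}$ is obtained by matching it with \eqref{shiftinverse}, so invertibility is imported from this lemma.

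Your route is the standard direct one, and it is correct. You build $g$ from the universal property of $C^*(E_{(v,w)})$ via the Cuntz--Krieger family containing $T=\sum_y S_{\theta(y)}S_y^*$, then check both composites on generators. This handles any finite $s_E^{-1}(v)$. That matters because, for $|s_E^{-1}(v)|>1$, no single element of $S(E)$ produces the sum $T$, which is why the paper reaches the general case only through a maximal out-split at $v$. Your computations are all right: $T^*T=S_v$, $S_e^*T=0$, $TT^*=\sum_y S_{\theta(y)}S_{\theta(y)}^*$, $TS_{y'}=S_{\theta(y')}$, and $f_*(T)=S_{x_{w,v}}$.

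Your last paragraph needs revising.

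\emph{Infinite emitter at $w$.} This case needs nothing further. $w$ is an infinite emitter in $E$ if and only if it is one in $E_{(v,w)}$, and then only $TT^*\le S_w$ and orthogonality to the retained ranges are required, both of which you have.

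\emph{The case $v=w$.} This cannot be ``handled''; it must be excluded. If $v=w$, then $\theta$ is an injection of the finite set $s_E^{-1}(v)$ into itself, so $\theta(s_E^{-1}(v))=s_E^{-1}(v)$ and every edge out of $v$ is deleted. Then $x_{v,v}\theta^{-1}(e)$ is not a path in $E_{(v,v)}$, so $f$ and $f_*$ are not even defined. The conclusion fails anyway: one vertex with two loops gives $\mathcal{O}_2$, while $E_{(v,v)}$ is a single loop with $C^*(E_{(v,v)})\cong C(\mathbb{T})$.

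So $v\neq w$ is an implicit hypothesis of the shift move. It is exactly what your phrase ``only edges out of $w$ are removed'' uses. It gives $s_E^{-1}(v)\cap\theta(s_E^{-1}(v))=\emptyset$, hence $g(S_{y'})=S_{y'}$ and $f_*(S_y)=S_y$. It also gives $s_{E_{(v,w)}}^{-1}(v)=s_E^{-1}(v)$, which you need for $f_*(T)=S_{x_{w,v}}$.

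\emph{Existence of $f_*$.} You take this from the shift proposition together with the functor. However, the paper's proof that $f$ is regular itself quotes the formula for $f_*$ from \cite{ABRAMS20081983}. To make your argument self-contained, check directly that the images form a Cuntz--Krieger $E$-family. For $y,y'\in s_E^{-1}(v)$ we have $(S_{x_{w,v}}S_y)^*S_{x_{w,v}}S_{y'}=\delta_{y,y'}S_{t_E(y)}$. Also $\sum_y S_{x_{w,v}}S_yS_y^*S_{x_{w,v}}^*=S_{x_{w,v}}S_{x_{w,v}}^*$, which turns the sum relation at $w$ in $E_{(v,w)}$ into the one in $E$. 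This is your computation for $T$ run in reverse.
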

\begin{theorem}\label{thm:shiftinv}
Let $E$ be a graph and $v,w\in E^0$, $\theta:s_E^{-1}(v)\longrightarrow s_E^{-1}(w)$ be a map such that $s_E^{-1}(v)=\{ y \}$ and $t_E(y)=t_E(\theta(y))$.
Then the maps $\varphi^0\colon E_{(v,w)}^0\longrightarrow S(E)$ and $\varphi^1\colon E_{(v,w)}^1\longrightarrow S(E) $ given by
\begin{gather*}
	\varphi^0(u) := \begin{cases}
		(u,u) & u\neq v\\
		(y,y) & u=v\\
	\end{cases}\\
	\varphi^1(e) := \begin{cases}
		(e,t_E(e))  & t_{E}(e) \neq v\\
		(ey,y) &  e\neq x_{w,v},\,t_{E}(e) =v\\
		(\theta(y),y) & e=x_{w,v}
	\end{cases}
\end{gather*}
determine a regular extended path homomorphism of graphs $\varphi\colon S(E_{(v,w)})\longrightarrow S(E)$ that covariantly induces the inverse $*$-homomorphism \eqref{shiftinverse}:
$\varphi_*=(f_*)^{-1}$.
\end{theorem}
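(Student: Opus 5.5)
The plan is to verify directly that the pair $(\varphi^0,\varphi^1)$ satisfies the four axioms listed below Definition~\ref{semi}. That gives an extended path homomorphism $\varphi\colon S(E_{(v,w)})\to S(E)$. I will then get regularity from Lemma~\ref{lem:from.hom.to.reg}, with $\Phi$ the restriction of $(f_*)^{-1}$ from \eqref{shiftinverse} to Leavitt path algebras, as in the Remark after Theorem~\ref{thm:maxoutsplit}. Finally I will compare $\varphi_*$ and $(f_*)^{-1}$ on generators.

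Throughout I use the standing assumption that the regular vertex $v$ emits no loop, so $t_E(y)\neq v$; I also take $w\neq v$. The combinatorics then simplifies as follows:
\begin{itemize}
\item Since $s_E^{-1}(v)=\{y\}$ and $\theta(y)$ starts at $w$, the only edge of $E_{(v,w)}$ with source $v$ is $y$, and $\varphi^1(y)=(y,t_E(y))$.
\item The edges of $E_{(v,w)}$ with target $v$ are $x_{w,v}$ and the edges $e\in E^1$ with $t_E(e)=v$.
\item The edge $\theta(y)$ is no longer an edge of $E_{(v,w)}$, so it cannot coincide with any $e$ in a formula $\varphi^1(e)=(e,t_E(e))$ or $(ey,y)$.
\end{itemize}

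\textbf{Axiom (1).} For $u\neq v$ we get $(y,y)(u,u)=0$ because $s_E(y)=v\neq u$. Also $(y,y)(y,y)=(y,y)$, and vertices other than $v$ behave trivially.

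\textbf{Axioms (2)--(3).} I split edges according to whether the source or the target is $v$; the case where both are $v$ cannot occur, since $v$ emits no loop.
\begin{itemize}
\item Edge $y$: $(y,y)(y,t_E(y))=(y,t_E(y))$.
\item Edges with target $v$: $(s_E(e),s_E(e))(ey,y)=(ey,y)=(ey,y)(y,y)$.
\item Edge $x_{w,v}$: $(w,w)(\theta(y),y)=(\theta(y),y)=(\theta(y),y)(y,y)$.
\end{itemize}

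\textbf{Axiom (4).} I compute $\varphi^1(a)^*\varphi^1(b)$ case by case.
\begin{itemize}
\item Both targets different from $v$: this is the usual relation (CK1) in $S(E)$.
\item Both of the form $(ey,y)$: $(y,ey)(e'y,y)=\delta_{e,e'}(y,y)$, because $ey$ and $e'y$ are comparable only if $e=e'$.
\item $a=b=x_{w,v}$: $(y,\theta(y))(\theta(y),y)=(y,y)$.
\item $(y,\theta(y))(ey,y)=0$, since $e\neq\theta(y)$.
\item Mixed cases with one target $\neq v$: they vanish because the first edges differ. The only coincidence to rule out is a first edge equal to $\theta(y)$, which is excluded since $\theta(y)\notin E_{(v,w)}^1$.
\end{itemize}

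\textbf{Regularity.} Here the one genuinely non-formal input is the Leavitt relation at $v$: since $v$ is regular with $s_E^{-1}(v)=\{y\}$, we have $S_v=S_yS_y^*$ in $L_k(E)$. Hence $\Phi(S_v)=S_v=S_{(y,y)}=S_{\varphi(v,v)}$, and $\Phi(S_u)=S_u=S_{\varphi(u,u)}$ for $u\neq v$. For edges I check $\Phi(S_eS_e^*)=S_{\varphi(e,e)}$:
\begin{itemize}
\item For $x_{w,v}$: $\Phi(S_{x_{w,v}}S_{x_{w,v}}^*)=S_{\theta(y)}S_y^*S_yS_{\theta(y)}^*=S_{\theta(y)}S_{\theta(y)}^*$, and $\varphi(x_{w,v},x_{w,v})=(\theta(y),y)(y,\theta(y))=(\theta(y),\theta(y))$.
\item For $e$ with $t_E(e)=v$: $S_eS_e^*=S_eS_yS_y^*S_e^*=S_{ey}S_{ey}^*$, which is $S_{\varphi(e,e)}$.
\item For all other edges the check is trivial.
\end{itemize}
Lemma~\ref{lem:from.hom.to.reg} then gives regularity of $\varphi$.

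\textbf{Identification $\varphi_*=(f_*)^{-1}$.} By \eqref{explicit}, $\varphi_*$ sends the generators as follows:
\begin{itemize}
\item $S_v\mapsto S_yS_y^*=S_v$;
\item $S_{x_{w,v}}\mapsto S_{\theta(y)}S_y^*$, which is the sum in \eqref{shiftinverse} with a single term;
\item $S_e\mapsto S_eS_yS_y^*=S_eS_v=S_e$ when $t_E(e)=v$;
\item $S_e\mapsto S_e$ otherwise.
\end{itemize}
So $\varphi_*$ agrees with $(f_*)^{-1}$ on generators and hence on $C^*(E_{(v,w)})$.

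The main obstacle I expect is the bookkeeping in Axiom (4), together with making sure the hypotheses exclude the degenerate cases ($y$ a loop, or $v=w$). In those cases the formulas $(ey,y)$ and $(\theta(y),y)$ would overlap with the case $t_E(e)\neq v$, and the orthogonality checks could fail.
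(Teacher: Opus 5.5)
\textbf{The gap.} You adopt as a ``standing assumption'' that $v$ emits no loop, i.e.\ $t_E(y)\neq v$. The theorem does not assume this. That hypothesis appears only in the introduction and in the corollary after the theorem, where the theorem is applied to a maximal out-split. The paper's proof treats the case where $y$ is a loop explicitly, in Axioms (2)--(3) and again in the regularity check. As written, you prove only a special case.

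Your reason for excluding it is also mistaken. If $t_E(y)=v$, the clauses do not overlap: $y$ falls only under the second clause, so $\varphi^1(y)=(yy,y)$. Meanwhile $\theta(y)$ is an edge from $w$ to $v$, and it is simply replaced by $x_{w,v}$. Nothing fails.

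\textbf{The repair.} It is short, and most of your computations already cover the loop case once you allow $y$ among the edges with target $v$. Only the following changes are needed.
\begin{itemize}
\item Axioms (2)--(3): your bullet for the edge $y$ becomes $(y,y)(yy,y)=(yy,y)=(yy,y)(y,y)$. The source factor is now $\varphi^0(v)=(y,y)$, not $(s_E(e),s_E(e))$, and your remark that ``both source and target $v$ cannot occur'' must be dropped.
\item Axiom (4): your existing bullets give $(y,yy)(yy,y)=(y,y)=\varphi^0(v)$ and $(y,yy)(ey,y)=0$ for $e\neq y$. They also give $(y,\theta(y))(yy,y)=0$, because $\theta(y)\neq y$; this is where $w\neq v$ is used. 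The mixed terms vanish as before.
\item Regularity: one has $\varphi(y,y)=(yy,yy)$ and $S_{yy}S_{yy}^*=S_yS_vS_y^*=S_yS_y^*=\Phi(S_yS_y^*)$, by the same relation $S_v=S_yS_y^*$.
\item Identification: $\varphi_*(S_y)=S_yS_yS_y^*=S_y$.
\end{itemize}

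\textbf{Comparison with the paper.} Apart from this, your verification of the four axioms and your identification of $\varphi_*$ with $(f_*)^{-1}$ on generators follow the paper.

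For regularity you take a different route. The paper checks directly, path by path, that every nonzero $(\alpha,\alpha)\le\varphi(u,u)$ has nonzero product with some $\varphi(e,e)$. You instead apply Lemma~\ref{lem:from.hom.to.reg} with $\Phi$ the restriction of $(f_*)^{-1}$, as in the Remark after Theorem~\ref{thm:maxoutsplit}. This is valid and shorter, but it relies on the cited fact that \eqref{shiftinverse} defines a $*$-homomorphism, whereas the paper's argument stays entirely inside $S(E)$.

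Your extra assumption $w\neq v$ is harmless. For $w=v$ one would have $\theta(y)=y$, and the shift homomorphism $f$ would not even be defined, since $f(y)=x_{v,v}y$ involves a deleted edge.
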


\begin{proof}
Similar as to the proof of Theorem \ref{thm:maxoutsplit} we will check the four necessary axioms that $\varphi^0$ and $\varphi^1$ need to satisfy in order to lift to a map $\varphi:S(E_{(v,w)})\to S(E)$.

\underline{Axiom (1):}	Let $u,u'\in E_{(v,w)}^0$. We need to check that
\begin{equation}
	\varphi^0(u)\varphi^0(u')=\delta_{u,u'}\varphi^0(u).
\end{equation}
We can distinguish three different cases: first suppose that $u,u'\neq v$. Then everything trivializes as we get
\begin{equation}
	\varphi^0(u)\varphi^0(u') = (u,u)(u',u') = \delta_{u,u'}(u,u) = \delta_{u,u'}\varphi^0(u).
\end{equation}
Next, assume that $u=v$ and $u'\neq v$. We see that
\begin{equation}
	\varphi^0(v)\varphi^0(u') = (y,y)(u',u') = 0,
\end{equation}
as $u'\neq v$ must not be the starting vertex of $y$. The dual case of $u\neq v, u'=v$ follows by commutativity of idempotents. 
Lastly, if $u=u'=v$ then we get
\begin{equation}
	\varphi^0(v)\varphi^0(v) = (y,y)(y,y) = (y,y)=\varphi^0(v)
\end{equation}
and we are done with Axiom 1.

\underline{Axioms (2)-(3):} For $e\in E_{(w,v)}^1$ we need to check that the following formulas hold
\begin{equation}
	\varphi^0(s_{E_{(w,v)}}(e))\varphi^1(e) = \varphi^1(e) = \varphi^1(e)\varphi^0(t_{E_{(w,v)}}(e)).
\end{equation}
Again, this will be done via casework. Suppose that $e=x_{w,v}$, then we have
\begin{gather}
	\varphi^0(w)\varphi^1(x_{w,v}) = (w,w)(\theta(y),y) = (\theta(y),y)=\varphi^1(e),\\
	\varphi^1(x_{w,v})\varphi^0(v) = (\theta(y),y)(y,y) = (\theta(y),y)=\varphi^1(e).
\end{gather}
Next, suppose that $e\in E_{(w,v)}^1\setminus\{x_{w,v},y\}$ and $t_E(e)\neq v$. Note that $e\neq y$ forces the condition $s_E(e)\neq v$. Hence
\begin{gather}
	\varphi^0(s_{E_{(w,v)}}(e))\varphi^1(e) = (s_E(e),s_E(e))(e,t_E(e)) = (e,t_E(e))=\varphi^1(e),\\
	\varphi^1(e)\varphi^0(t_{E_{(w,v)}}(e)) = (e,t_E(e))(t_E(e),t_E(e)) = (e,t_E(e))=\varphi^1(e).
\end{gather}
In the same vein, supposing that $e\in E_{(w,v)}^1\setminus\{x_{w,v},y\}$ and $t_E(e)=v$ we get
\begin{gather}
	\varphi^0(s_{E_{(w,v)}}(e))\varphi^1(e) = (s_E(e),s_E(e))(ey,y) = (ey,y)=\varphi^1(e),\\
	\varphi^1(e)\varphi^0(v) = (ey,y)(y,y) = (ey,y)=\varphi^1(e).
\end{gather}
Finally, let $e=y$. Here we need to treat the case of $y$ being a loop separately. If $t_E(y)\neq y$, then we get
\begin{gather}
	\varphi^0(v)\varphi^1(y) = (y,y) (y,t_E(y)) = (y,t_E(y)) = \varphi^1(y),\\
	\varphi^1(y)\varphi^0(t_E(y)) = (y,t_E(y))(t_E(y),t_E(y)) = (y,t_E(y)) = \varphi^1(y).
\end{gather}
If $y$ is a loop, then we obtain
\begin{gather}
	\varphi^0(v)\varphi^1(y) = (y,y)(yy,y) = (yy,y) = \varphi^1(y),\\
	\varphi^1(y)\varphi^0(v) = (yy,y)(y,y) = (yy,y) = \varphi^1(y).
\end{gather}
\underline{Axiom (4):} Here we need to check that for $e,e'\in E_{(w,v)}^1$ we have
\begin{equation}
	\varphi^1(e)^*\varphi^1(e') = \delta_{e,e'}\varphi^0(t_{E_{(w,v)}}(e)).
\end{equation}
As always, we shall consider some cases. Let us start with the trivial case, $e,e'\neq x_{w,v}$ and $t_E(e),t_E(e')\neq v$. In this scenario we get
\begin{equation}
	\varphi^1(e)^*\varphi^1(e') = (t_E(e),e)(e',t_E(e')) = \delta_{e,e'}(t_E(e),t_E(e)) = \delta_{e,e'}\varphi^0(t_{E_{(w,v)}}(e)).
\end{equation}
Next, let $e,e'\neq x_{w,v}, t_E(e)=v$ and $t_E(e')\neq v$. Then we see that
\begin{equation}
	\varphi^1(e)^*\varphi^1(e') = (y,ey)(e',t_E(e')) = 0.
\end{equation}
Observe that the above product vanishes as $e'\neq e$ because these edges have different targets. The dual case follows by taking * on both sides of the above equation. Next, consider $e,e'\neq x_{w,v}$ and $t_E(e)=t_E(e')=v$. We have
\begin{equation}
	\varphi^1(e)^*\varphi^1(e') = (y,ey)(e'y,y) = \delta_{e,e'}(y,y) = \delta_{e,e'}\varphi^0(v).
\end{equation}
Next let us assume that $e=x_{w,v}$, which implies that
\begin{gather}
	\varphi^1(x_{w,v})^*\varphi^1(e') = \begin{cases}
		(y,\theta(y))(\theta(y),y) = (y,y) = \varphi^0(v) & e'=x_{w,v},\\
		(y,\theta(y))(e',t_E(e')) = 0 & e'\neq x_{w,v},\, t_E(e')\neq v,\\
		(y,\theta(y))(e'y,y) = 0 &  e'\neq x_{w,v},\, t_E(e') = v.
	\end{cases}
\end{gather}
The second and third product vanishes as both $e'$ and $e'y$ can be comparable to $\theta(y)$ if and only if $e'=\theta(y)$ which is not the case as $e'\in E^1\setminus\{\theta(y)\}$ by construction. The dual case follows by taking * on both sides of the above equations. Therefore the maps $\varphi^0,\varphi^1$ determine a extended path homomorphism $\varphi:S(E_{(w,v)})\to S(E)$.

As for the regularity, let us start with a vertex $u\in\mathrm{reg}(E_{(w,v)})$ that is not $v$. Then $\varphi(u,u)=(u,u)$ and so any idempotent $t\le (u,u)$ is of the form $t=(\alpha,\alpha)$ with $\alpha\in \mathrm{FP}(E)$ being a path that starts at $u$. If $\alpha=u$, then $\varphi(e,e)\leq (\alpha,\alpha)$ for any $e\in s_{E_{(w,v)}}^{-1}(u)$. Otherwise, let $e$ be the first edge of $\alpha$. If $e=\theta(y)$, so in particular, $u=w$, then we pick the edge $x_{w,v}\in s_{E_{(w,v)}}^{-1}(w)$ and get
\begin{equation}
	\varphi(x_{w,v},x_{w,v}) = (\theta(y),y)(y,\theta(y)) = (\theta(y),\theta(y))\leq (\alpha,\alpha).
\end{equation}
If, on the other hand, $e\neq\theta(y)$ then we need to distinguish two cases depending on the target $t_E(e)$. Lets start with the case $t_E(e)\neq v$. Here everything is trivial, as the edge $e$, considered as an edge in $E_{(w,v)}$ yields $\varphi(e,e) = (e,e)\geq (\alpha,\alpha)$. If $t_E(e)=v$, then the same calculation becomes	$\varphi(e,e) = (ey,ey)$. If $\alpha=e$, then we are done since $(ey,ey)\leq (e,e)$. If $\alpha$ has length at least 2, then the path $ey$ is still comparable to $\alpha$ as $y$ is the sole edge with source $v$, hence $(ey,ey)\geq (\alpha,\alpha)$.

Next, consider the case $u=v\in\mathrm{reg}(E)$. Since $\varphi(v,v)=(y,y)$ any $0\neq t\leq \varphi(v,v)$ is of the for $t=(\alpha,\alpha)$, where $\alpha$ starts with the edge $y$. If $y$ is not a loop, then $y$ itself is a perfectly good candidate for our regularity condition as $\varphi(y,y) = (y,y)\geq (\alpha,\alpha)$.
But if $y$ is a loop, then we see that	$\varphi(y,y) = (yy,yy)$. Recall that we assume that $y$ is the only edge with source $v$, so if $y$ is a loop and $\alpha$ is a part that starts with $y$, then $\alpha$ is necessary just a concatenation of some $y$'s and therefore the path $yy$ will always be comparable to such a concatenation. Thus we have shown that $\varphi$ is regular.

To see that the covariantly induced map $\varphi_*\colon C^*(E_{(w,v)})\to C^*(E)$ is exactly the inverse $(f^*)^{-1}\colon C^*(E_{(w,v)})\to C^*(E)$ which was given in $\eqref{shiftinverse}$, we observe that $S_yS_y^*=S_v$ since $s_E^{-1}(v)=\{y\}$. The expressions for $\varphi_*$ and $(f^*)^{-1}$ that are not immediately the same are for $S_v$ and for $e\in E^1\setminus \theta(s_E^{-1}(w))$ such that $t_E(e)=v$. In the first case
\begin{equation}
	\varphi_*(S_v)=S_yS_y^*=S_v=(f^*)^{-1}(S_v),
\end{equation}
and, in the second case
\begin{equation}
	\varphi_*(S_e)=S_eS_yS_y^*=S_eS_v=S_e=(f^*)^{-1}(S_e).
\end{equation}
Hence $\varphi_*=(f^*)^{-1}$, concluding the proof.
\end{proof}
Any shift at $(v,w)$ such that $v$ does not emit an edge that is a loop can be obtained  by combining the maximal out-split at $v$, 
the above theorem, and then a  
graph homomorphism:
\begin{corollary}
Let $E$ be a graph, $v,w\in E^0$ be two vertices such that $v$ is regular and does not emit a loop and $\theta\colon s_E^{-1}(v)\to s_E^{-1}(w)$ be an 
injection with the property $t_E(e)=t_E(\theta(e))$ for all $e\in s_E^{-1}(v)$. Then the inverse to $f_*\colon C^*(E)\to C^*(E_{(v,w)})$, as given in 
$\eqref{shiftinverse}$, can be realized as a composition
\begin{equation}
	(f_*)^{-1} = g^*\circ \varphi_*,
\end{equation}
where $g$ is an admissible graph homomorphism and $\varphi$ is a regular extended graph homomorphism.
\end{corollary}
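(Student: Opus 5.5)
The plan is to factor the shift through the maximal out-split at $v$, so that Theorem~\ref{thm:shiftinv}, which only handles a single emitted edge, can be applied one split copy at a time. I assume $w\neq v$, since $v$ emits no loop, and write $s_E^{-1}(v)=\{e_1,\ldots,e_n\}$.

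Let $F:=E_{\widehat{O}}$ be the maximal out-split of $E$ at $v$. Because no $e_i$ is a loop, we have $m(t_E(e_i))=1$. Hence each split vertex $v^i$ emits exactly one edge, namely $e_i^1$, and $t_F(e_i^1)=t_E(e_i)^1$. The vertex $w^1=w$ still emits the edges $\theta(e_i)^1$, and these have the same targets $t_E(e_i)^1$. So for each $i$ the pair $(v^i,w)$ satisfies the hypotheses of Theorem~\ref{thm:shiftinv}, with $y=e_i^1$ and $\theta(y)=\theta(e_i)^1$. The shifts at different $i$ involve disjoint edges, so I perform them successively and obtain a graph $G$.

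Applying Theorem~\ref{thm:shiftinv} to each of these $n$ shifts and composing in $\mathsf{REG}$ gives a regular extended path homomorphism $\chi\colon S(G)\to S(F)$. Each step inverts the corresponding shift $*$-isomorphism, so $\chi_*$ is the inverse of the composite $*$-isomorphism $C^*(F)\to C^*(G)$ induced by these shifts. Let $\psi\colon S(F)\to S(E)$ be the map of Theorem~\ref{thm:maxoutsplit}, and set $\varphi:=\psi\circ\chi$. It is regular because $\mathsf{REG}$ is a category.

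Next I identify $G$ combinatorially with the maximal out-split of $E_{(v,w)}$ at $v$. In $E_{(v,w)}$ the vertex $v$ still emits $e_1,\ldots,e_n$, so it is regular, and it receives the old edges into $v$ together with $x_{w,v}$. Its maximal out-split at $v$ therefore has
\begin{itemize}
\item vertices $v^1,\ldots,v^n$, with $v^i$ emitting only $e_i^1$;
\item copies $x_{w,v}^i\colon w\to v^i$ of the new edge;
\item copies $e^i$ of the old edges $e$ with target $v$;
\item the edges $\theta(e_i)$ removed.
\end{itemize}
This is exactly $G$, with $x_{w,v^i}$ corresponding to $x_{w,v}^i$. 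Let $g\colon G\to E_{(v,w)}$ be the resulting multi-out-split homomorphism. It is admissible, and $g^*$ is a $*$-isomorphism by Lemma~\ref{lem:outsplit}.

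It remains to show $(f_*)^{-1}=\varphi_*\circ g^*$. I read the statement's composite as $C^*(E_{(v,w)})\xrightarrow{g^*}C^*(G)\xrightarrow{\varphi_*}C^*(E)$. Since all three maps are $*$-homomorphisms and $f_*$ is invertible, it suffices to check the identity on the generators $S_u$ and $S_e$ of $C^*(E_{(v,w)})$ against \eqref{shiftinverse}.

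I expect the main obstacle to be this generator-level verification, in particular on $S_{x_{w,v}}$. Here $g^*(S_{x_{w,v}})=\sum_i S_{x_{w,v}^i}$. The shift inverse sends $S_{x_{w,v}^i}$ to $S_{\theta(e_i)^1}S_{e_i^1}^*$. Then $\psi_*$ sends this to
\[
S_{\theta(e_i)}\bigl(S_{e_i}S_{e_i}^*\bigr)^{*}\ \text{reorganised as}\ S_{\theta(e_i)}S_{e_i}^*,
\]
using $\psi(e_i^1)=(e_i,t_E(e_i))$, which holds since $t_E(e_i)\neq v$. Summing over $i$ yields exactly $\sum_{y} S_{\theta(y)}S_y^*$, as required by \eqref{shiftinverse}.

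The remaining generators are simpler. Sums over the split vertices $v^i$ collapse back to $S_v$ via the Cuntz--Krieger relation at $v$, and edges into $v$ go to $S_e\sum_i S_{e_i}S_{e_i}^*=S_e$. I expect to handle these by the same bookkeeping as in the proofs of Theorem~\ref{thm:maxoutsplit} and Theorem~\ref{thm:shiftinv}.
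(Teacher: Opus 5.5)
Your proposal is correct and is essentially the paper's proof. The paper also takes $\varphi$ to be the map of Theorem~\ref{thm:maxoutsplit} composed with the $n$ single-edge shift inverses of Theorem~\ref{thm:shiftinv} at the vertices $v^i$ of $E_{\widehat{O}}$, takes $g$ to be the collapse $(E_{(v,w)})_{\widehat{O}}\to E_{(v,w)}$, and arrives at $(f_*)^{-1}=\varphi_*\circ g^*$, which is exactly your reading of the composite order.

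The one difference is how the final identity is verified. The paper inverts the square $\widehat{F}^*\circ f_*=F_*\circ\widehat{f}^*$, where $\widehat{f},\widehat{F}$ are the two out-split collapses and $F$ is the composite shift, and it asserts this square commutes rather than checking it. Your generator-by-generator computation verifies the identity directly. Your intermediate expression $S_{\theta(e_i)}\bigl(S_{e_i}S_{e_i}^*\bigr)^{*}$ is a slip, but the value $S_{\theta(e_i)}S_{e_i}^*$ you obtain from $\psi(e_i^1)=(e_i,t_E(e_i))$ is correct.
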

\begin{proof}
Consider the following two maximal out-splits -- let $E_{\widehat{O}}$ be the maximal out-split of $E$ at $v$ and $(E_{(v,w)})_{\widehat{O}}$ be the maximal out-split of the shifted graph $E_{(v,w)}$ at $v$. Let us also denote by $v^1,\dots,v^n$ the newly created vertices in both graphs. One can obtain the graph $(E_{(v,w)})_{\widehat{O}}$ from $E_{\widehat{O}}$ by doing shifts on each $v^i$ in $E_{\widehat{O}}$. Let $F\colon E_{\widehat{O}}\to (E_{(v,w)})_{\widehat{O}}$ be the composition of these shift moves, $\widehat{f}\colon E_{\widehat{O}}\to E$ and $\widehat{F}\colon (E_{(w,v)})_{\widehat{O}}\to E_{(w,v)}$ be the collapses of maximal out-splits at $v$ in $E$ and $E_{(w,v)}$ respectively. Observe that since $v$ did not emit a 
loop, then every $v^i$ has exactly one edge emitting from it. Hence we can use Theorem~\ref{thm:shiftinv} and claim that there exists a regular extended graph homomorphism $\psi\colon (E_{(w,v)})_{\widehat{O}}\to E_{\widehat{O}}$ such that $\psi_*=(F_*)^{-1}$, since we can describe inverses of all shifts that build up $F$ using regular extended graph homomorphisms. Likewise, by Theorem~\ref{thm:maxoutsplit} we can find regular extended graph homomorphisms $\phi\colon S(E_{\widehat{O}})\to S(E), \Phi\colon S((E_{(w,v)})_{\widehat{O}})\to S(E_{(w,v)})$ such that
\begin{equation}
	\phi_* = (\widehat{f}^*)^{-1}, \Phi_* = (\widehat{F}^*)^{-1}.
\end{equation}
These regular extended graph homomorphisms fit into the following commutative diagram
\[\begin{tikzcd}[ampersand replacement=\&]
	{S(E)} \&\& {S(E_{(w,v)})} \\
	\\
	{S(E_{\widehat{O}})} \&\& {S((E_{(w,v)})_{\widehat{O}})}
	\arrow["f", from=1-1, to=1-3]
	\arrow["\phi"', from=3-1, to=1-1]
	\arrow["F", shift left, from=3-1, to=3-3]
	\arrow["\Phi"', from=3-3, to=1-3]
	\arrow["\psi", shift left, from=3-3, to=3-1]
\end{tikzcd}\]
After passing to $C^*$-algebras all arrows become invertible and we get the following (still commutative) diagram
\[\begin{tikzcd}[ampersand replacement=\&]
	{C^*(E)} \&\& {C^*(E_{(w,v)})} \\
	\\
	{C^*(E_{\widehat{O}})} \&\& {C^*((E_{(w,v)})_{\widehat{O}})}
	\arrow["{{f_*}}", from=1-1, to=1-3]
	\arrow["{{\widehat{f}^*}}"', shift right, from=1-1, to=3-1]
	\arrow["{{\widehat{F}^*}}", shift left, from=1-3, to=3-3]
	\arrow["{{\phi_*}}"', shift right, from=3-1, to=1-1]
	\arrow["{{F_*}}", shift left, from=3-1, to=3-3]
	\arrow["{{\Phi_*}}", shift left, from=3-3, to=1-3]
	\arrow["{{\psi_*}}", shift left, from=3-3, to=3-1]
\end{tikzcd}\]
Therefore we get our desired decomposition for $(f_*)^{-1}$:
\begin{equation}
	(f_*)^{-1} = (\phi\circ\psi)_*\circ\widehat{F}^*.
\end{equation}
\end{proof}
We hope that the general shift, including regular vertices supporting loops, can be handled by extended relation morphisms~\cite{cdh25}.

\subsection{Reduction, unital reduction and subdivision}
\begin{definition} [\cite{KOC20201297}]  \label{reduction}
Let $E$ be a graph and $w \in \reg(E)$ be a vertex that does not emit an edge that is a loop.  Let $E_{R}$ denote the graph:
\begin{gather*} 
E_{R}^0= E^0\setminus \{ w\},\qquad
E_{R}^1= E^1 \setminus \left( s^{-1}_E(w) \sqcup t_E^{-1}(w) \right)   \sqcup \{ eg \, | \, t_E(e)=w \text{ and } s_E(g)=w \},\\
s_{E_{R}}(p)=\begin{cases} s_E(p) & p \in E^1 \setminus \left( s^{-1}_E(w) \sqcup t_E^{-1}(w) \right)  \\
					s_E(e) & p = eg \in \{ eg \, | \, t_E(e)=w \text{ and } s_E(g)=w \} \ ,\\
		\end{cases}\\
t_{E_{R}}(p)=\begin{cases} t_E(p) & p \in E^1 \setminus \left( s^{-1}_E(w) \sqcup t_E^{-1}(w) \right)  \\
					t_E(g) & p = eg\in \{ eg \, | \, t_E(e)=w \text{ and } s_E(g)=w \} \ .\\
		\end{cases}
\end{gather*}
We say that $E_{R}$ is formed by performing a \emph{reduction} at $w$ on $E$. 
\end{definition}
\noindent
Note that in the case were $w$ is a source, this move is the same as eliminating the source.

\begin{definition} [\cite{eilers2019refined}]  \label{ureduction}
Let $E$ be a graph and $w \in \reg(E)$ be a vertex that does not emit any edge that is a loop.  Let $E_{R+}$ denote the graph:
\begin{gather*}
E_{R+}^0= E^0,\qquad
E_{R+}^1= E^1 \setminus t_E^{-1}(w)    \sqcup \{ eg \, | \, t_E(e)=w \text{ and } s_E(g)=w \},\\
s_{E_{R+}}(p)=\begin{cases} s_E(p) & p \in E^1 \setminus t_E^{-1}(w) \\
					s_E(e) & p = eg \in \{ eg \, | \, t_E(e)=w \text{ and } s_E(g)=w \} \ ,\\
		\end{cases}\\
t_{E_{R+}}(p)=\begin{cases} t_E(p) & p \in E^1 \setminus t_E^{-1}(w)  \\
					t_E(g) & p = eg \in \{ eg \, | \, t_E(e)=w \text{ and } s_E(g)=w \} \ .\\
		\end{cases}
\end{gather*}
We say that $E_{R+}$ is formed by performing a \emph{unital reduction} at $w$ on $E$.
\end{definition}
\noindent
Here it is very important to observe that, under some assumptions, the unital reduction is a move inverting the shift move. More precisely, 
if not only $w \in \reg(E)$ is a vertex that does not emit any edge that is a loop, but also it satisfies $|t_E^{-1}(w)|=1$, then the unital
reduction $E_{R+}$ at $w$ yields a graph with $v\in\reg(E_{R+})$ that is a source and such that the shifted
graph $(E_{R+})_{(v,w)}$ is~$E$. Much in the same way, if not only $v\in\reg(E)$, but also it is a source,
 then the vertex $v$ in the shifted graph $E^0_{(v,w)}$ not only is regular and does not emit an edge that is a loop, but also it satisfies 
 $|t_{E_{(v,w)}}^{-1}(v)|=1$, and the unitally reduced graph 
$(E_{(v,w)})_{R+}$ is~$E$.

\begin{definition}
Let $E$ be a graph and $g \in E^1$ be such that $s_E^{-1}(e)$ is a regular vertex. We define the subdivided graph $E_g$ as follows:
\begin{gather*}
E_{g}^0= E^0\sqcup \{ t_E(g)^\prime\}, \qquad E_{g}^1= E^1\sqcup \{ g^\prime\}, \\
s_{E_{g}}(e)=\begin{cases} s_E(e) & e \in E^1   \\
					 t_E(g)^\prime & e= g^\prime\ \\
		\end{cases},\qquad
t_{E_{g}}(e)=\begin{cases} t_E(e) & e \in E^1 \setminus \{ g \}   \\
					 t_E(g)^\prime & e= g \ \\ 
					 t_E(g) & e= g^\prime\ \\ 
		\end{cases}.
\end{gather*}
We say that $E_{g}$ is formed by performing a \emph{subdivision} at $g$ on $E$.
\end{definition}
\noindent
Note that the subdivision is the inverse move to the reduction at a vertex that emits exactly one edge, receives exactly one edge, and these two edges 
are distinct.
\begin{proposition} 
Let $E$ be a graph, and $E_{R}$ be its reduction at $w$. Then the following formulas define an admissible path homomorpism $f\colon E_{R}\to E $:
\begin{gather}  \nonumber
f(v)=v \text{ for } v \in E_{R}^0,\\
f(p)=\begin{cases} e  & p=e \in E^1 \setminus \left( s^{-1}_E(w) \sqcup t_E^{-1}(w) \right)  \\
			eg & p = eg \in \{ eg\, | \, t_E(e)=w \text{ and } s_E(g)=w \}\\
\end{cases}.
 \label{freduction} 
\end{gather}
Furthermore, let  $E_{R+}$ be the unital reduction  of $E$ at $w$. Then the following formulas  define an admissible path homomorpism 
$f\colon E_{R+}\to E $:
 \begin{gather}     \nonumber
f(v)=v \text{ for } v \in E_{R+}^0,\\
f(p)=\begin{cases} e  & p=e \in E^1 \setminus t_E^{-1}(w)  \\
			eg & p = eg \in \{ eg \, | \, t_E(e)=w \text{ and } s_E(g)=w \}\\
\end{cases}.
 \label{fureduction} 
\end{gather}
\end{proposition}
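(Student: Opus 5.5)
To prove that the maps \eqref{freduction} and \eqref{fureduction} are admissible path homomorphisms, I would show that each is a morphism of $\mathsf{MIPG}$ that is regular in the sense of Definition~\ref{regularity}. I treat the reduction $E_R$ and the unital reduction $E_{R+}$ in parallel; the second differs only in that $w$ and the edges $s_E^{-1}(w)$ are kept, with $f$ the identity on them.

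\textbf{Path homomorphism.} First, $f$ is well defined on edges and respects sources and targets. For a new edge $eg$ we have $s_{E_R}(eg)=s_E(e)=s_{PE}(eg)$ and $t_{E_R}(eg)=t_E(g)=t_{PE}(eg)$. The path $eg$ is a genuine path in $E$ because $t_E(e)=w=s_E(g)$. Extending multiplicatively to finite paths then gives a path homomorphism. On vertices, $f$ is the inclusion $E_R^0\subseteq E^0$, respectively the identity, so it is injective.

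\textbf{Monotonicity.} Suppose $f(a)\preceq f(b)$ for edges $a,b$. The first edge of $f(a)$ must equal the first edge of $f(b)$. If that first edge is $e$ with $t_E(e)\neq w$, then $a=b=e$. If $t_E(e)=w$, then both $a$ and $b$ have images of length $2$, so $f(a)=f(b)=eg$, which forces $a=b$.

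The one subtle point here is that an old edge $g\in s_E^{-1}(w)$, which survives in $E_{R+}$, can never be a proper prefix of $f(eg)=eg$. This holds because the first edge of $eg$ is $e$, and $e\neq g$ since $w$ emits no loop. In $E_R$ such edges $g$ are deleted anyway.

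\textbf{Regularity.} I would use Lemma~\ref{lem:regular.vs.cover}: for each $u\in\reg(E_R)$, show that $f(s_{E_R}^{-1}(u))$ is an orthogonal cover of $u$ in $E$, consisting of positive-length paths and with the same cardinality as $s_{E_R}^{-1}(u)$. This also settles the $0$-regular case, since condition~(1) then holds.

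First, any $u\neq w$ satisfies $u\in\reg(E_R)$ iff $u\in\reg(E)$, because $w$ is regular. Indeed, the $E_R$-edges at $u$ are the $E$-edges at $u$ not ending in $w$, together with the edges $eg$ for $e\in s_E^{-1}(u)\cap t_E^{-1}(w)$ and $g\in s_E^{-1}(w)$. Since $s_E^{-1}(w)$ is finite and nonempty, this set is finite and nonempty exactly when $s_E^{-1}(u)$ is.

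Now fix such $u$. The image $f(s_{E_R}^{-1}(u))$ is $\{e : t_E(e)\neq w\}\cup\{eg\}$, and $f$ is injective on it, so the cardinalities agree. Orthogonality follows as in the monotonicity step. For the cover property, take any path $\gamma$ starting at $u$:
\begin{itemize}
\item if $\gamma=u$, it is comparable with every element;
\item if the first edge $e$ of $\gamma$ has $t_E(e)\neq w$, then $e\preceq\gamma$;
\item if $t_E(e)=w$ and $\gamma=e$, then $\gamma\preceq eg$ for any $g\in s_E^{-1}(w)$, which exists because $w$ is not a sink;
\item otherwise the second edge $g$ of $\gamma$ lies in $s_E^{-1}(w)$, and $eg\preceq\gamma$.
\end{itemize}

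In the unital case, $w$ itself is also regular in $E_{R+}$ with $s_{E_{R+}}^{-1}(w)=s_E^{-1}(w)$. There $f$ is the identity on these edges, and they trivially form an orthogonal cover of $w$.

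The main obstacle is the bookkeeping in the regularity step. One must confirm that regular vertices correspond exactly, and must cover the edge case $\gamma=e$ with $t_E(e)=w$, which uses that $w$ is not a sink. Alternatively, by Proposition~\ref{equivregularity}, one could check regularity of $\varphi_f$ via Lemma~\ref{lem:from.hom.to.reg}, using the known $*$-isomorphism for reduction from \cite{KOC20201297}.
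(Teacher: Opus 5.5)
Your proposal is correct and follows essentially the same route as the paper. Both arguments prove monotonicity by looking at the first edge of the image, note injectivity on vertices, and get regularity from the key observation that whenever $t_E(e)=w$, the path $eg$ lies in the image for every $g\in s_E^{-1}(w)$. The only difference is that you verify regularity through the orthogonal-cover characterization of Lemma~\ref{lem:regular.vs.cover}, rather than checking conditions (1)(a)--(b) of Definition~\ref{regularity} directly; this handles both directions of the ``if and only if'' in (1)(b), as well as the vertex $w$ in the unital case, which the paper's sketch leaves implicit.
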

\begin{proof}
Firstly, we will argue in the case of reduction. First observe that $f$ is an admissible path homomorpism: Suppose that we have edges $p,p^\prime\in E^1_{R}$ 
such that $f(p) \preceq f(p^\prime)$. Clearly if both $p,p' \in E^1 \setminus \left( s^{-1}_E(w) \sqcup t_E^{-1}(w) \right) $, then we see that our inequality 
implies that $p=p^\prime$. Suppose that $p =  eg \in \{ eg \, | \, t_E(e)=w \text{ and } s_E(g)=w \}$. Then
\begin{equation}
	f(p) =eg.
\end{equation}
Note that  $t_E(e)=w$. In order for $f(p^\prime)$ to be comparable, we must have the target of the first arrow of $f(p^\prime) =w$. Thus, $f(p^\prime)$ 
has length at least two and $f(p^\prime)=e^\prime g^\prime$. We have
\begin{equation}
	eg \preceq e^\prime g^\prime
\end{equation}
and this inequality is possible only if $e=e^\prime$ and $g=g^\prime$, thus $p=p^\prime$.

Clearly $f$ is injective on vertices, hence it is a morphism in the category $\mathsf{MIPG}$. 
To prove regularity of $f$:  using Definition~\ref{regularity}, 
note that $f$ is injective on arrows, sending them all to paths of positive length. Monotonicity gives us 
that images of arrows cannot be extended and remain in the image for free. The only arrows who's image is more than one arrow are those where
 $p =  eg \in \{ eg \, | \, t_E(e)=w \text{ and } s_E(g)=w \}$. For any $g ^\prime$ such that $s_E(g^\prime)=w$, then we see that $eg^\prime$ is also in 
 $f(s^{-1}_E(e))$. Thus this map is regular. 

Now we will argue in the case of unital reduction. We still have that $f$ is an admissible path homomorpism, but we must briefly consider a new case. 
When $p \in s^{-1}_{E_{R+}}(w)$, this set is mapped injectively and it's image is not comparable with the image of any other arrows in~$E^1_{R+}$. 
Clearly $f$ is still injective on vertices, hence it is a morphism in the category $\mathsf{MIPG}$. 
The exact same argument yields that the map is regular. 
\end{proof}

Since the subdivision move is the inverse of the reduction move at a special vertex, 
and both subdivision and reduction induce path homomorphisms going in the same direction,
the above proposition implies:
\begin{corollary}
Let $E$ be a graph and $E_{g}$ be its subdivision at~$g$. Then the following formulas define an admissible path homomorphism of graphs 
$f\colon E \to E_g$:
\label{shiftcov}
\begin{gather}
f(v)=v \text{ for } v \in E^0, \qquad f(e)=\begin{cases} e & \text{ for } \in E^1 \setminus \{g \}\\
			gg^\prime & \text{ for } e=g\\
\end{cases}.
\label{subdiv}
\end{gather}
\end{corollary}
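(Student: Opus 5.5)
The plan is to realise $E$ itself as a reduction of $E_g$ and then read off the claimed map from the preceding proposition. Write $u:=t_E(g)$ and $u':=t_E(g)^\prime$ for the new vertex. In $E_g$, the vertex $u'$ emits exactly one edge, namely $g'$, and receives exactly one edge, namely $g$. These two edges are distinct, since $s_{E_g}(g)=s_E(g)\neq u'$. Moreover, $u'$ is regular, and it does not emit a loop because $t_{E_g}(g')=u\neq u'$.

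Hence the reduction $(E_g)_R$ at $u'$ is defined, and I will check directly from Definition~\ref{reduction} that it equals $E$. The vertex set is $E_g^0\setminus\{u'\}=E^0$. Removing $s^{-1}_{E_g}(u')\sqcup t^{-1}_{E_g}(u')=\{g',g\}$ leaves $E^1\setminus\{g\}$, and the only new composite edge is $gg'$. This composite has source $s_E(g)$ and target $t_{E_g}(g')=u=t_E(g)$. Identifying $gg'$ with $g$ therefore gives a graph isomorphism $(E_g)_R\cong E$ that is the identity on vertices and on the remaining edges.

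Now I apply the preceding proposition to the graph $E_g$ reduced at $u'$. It yields an admissible path homomorphism $(E_g)_R\to E_g$ given by formula \eqref{freduction}. On vertices this map is the identity. It sends each edge $e\in E^1\setminus\{g\}$ to $e$, and it sends the composite edge $gg'$ to the path $gg'$. Precomposing with the isomorphism $E\cong (E_g)_R$ (which is trivially a regular monotonic path homomorphism, injective on vertices) gives exactly the formulas \eqref{subdiv}. Since $\mathsf{RMIPG}$ is a category, the composite is again an admissible path homomorphism.

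The only delicate point is the hypothesis check: the reduction proposition needs the reduced vertex to be regular and loop-free, and this must be verified at $u'$ in $E_g$, not at any vertex of $E$. I will also note in passing that the hypothesis on $s_E(g)$ being regular is what keeps $E$ and $E_g$ with matching regular vertices. If one prefers to avoid the isomorphism step, a fallback is to verify directly that $f$ is regular. Here $s_E^{-1}(s_E(g))$ maps injectively onto an orthogonal cover of $s_E(g)$ in $E_g$, because the only path extending $g$ in $E_g$ starts with $gg'$. Lemma~\ref{lem:regular.vs.cover} then gives regularity, and monotonicity is immediate.
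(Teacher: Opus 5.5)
Your proposal is correct and follows the paper's own argument, which is the one-line remark that subdivision inverts reduction at a vertex emitting and receiving exactly one (distinct) edge. You realise $E$ as the reduction of $E_g$ at $t_E(g)'$ and read the formulas off the reduction proposition, and your explicit checks at $t_E(g)'$ and of $(E_g)_R\cong E$ supply exactly the details the paper leaves implicit.

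One aside is inaccurate but harmless: you say the regularity hypothesis on $s_E(g)$ keeps the regular vertices of $E$ and $E_g$ matching. In fact every vertex of $E$ emits the same edges in $E_g$ regardless, so that hypothesis plays no role in your argument.
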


\begin{lemma}[cf.\ \cite{eilers2019refined,KOC20201297}]
The  $*$-isomorpism $f_*\colon C^*(E_{R^+}) \to  C^*(E)$ induced by \eqref{fureduction} is given by
$$
f_*(S_p)=\begin{cases}
S_p &\text{ for } p \in E^0_{R^+} \sqcup (E^1  \setminus t_E^{-1}(w)) \\
S_xS_y &\text{ for }  p=xy \in \{ eg \, | \, t_E(e)=w = s_E(g) \}
\end{cases},
$$
and its inverse is given by
\[
(f_*)^{-1}(S_p)=\begin{cases}
S_p &\text{ for } p \in E^0 \sqcup (E^1  \setminus t_E^{-1}(w)) \\
 \sum_{y \in s_E^{-1}(w)} S_{py} S_y^* &\text{ for }  p \in t_E^{-1}(w)\\
\end{cases}.
\label{ifureduction}
\]
Moreover, the   $*$-homomorphism $f_*\colon C^*(E_{R}) \to  C^*(E)$ induced by \eqref{freduction} is given by
$$
f_*(S_p)=\begin{cases}
S_p &\text{ for } p \in E_R^0 \sqcup (E^1  \setminus \left( s_E^{-1}(w) \sqcup t_E^{-1}(w) \right)  \\
S_x S_y &\text{ for }  p=xy  \in \{ eg \, | \, t_E(e)=w =s_E(g) \}
\end{cases}.
$$
Also, the   $*$-homomorphism $f_*\colon C^*(E) \to  C^*(E_g)$ induced by \eqref{subdiv} is given by
$$
f_*(S_p)=\begin{cases}
S_p &\text{ for } p \in E_R^0 \sqcup (E^1  \setminus \{g\})  \\
S_g S_{g’} &\text{ for }  p=g
\end{cases}.
$$
\end{lemma}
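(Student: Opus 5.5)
The lemma makes three claims: explicit formulas for three covariantly induced $*$-homomorphisms, and an explicit inverse in the unital-reduction case. I would first dispose of the formulas themselves. By the proposition above, \eqref{freduction}, \eqref{fureduction} and \eqref{subdiv} define regular monotonic path homomorphisms injective on vertices, i.e.\ morphisms of $\mathsf{RMIPG}$. Hence they induce regular extended path homomorphisms $\varphi_f$ via \eqref{inducedext}, and the covariant functor from $\mathsf{REG}$ to C*-algebras yields $f_*$. By \eqref{explicit}, $f_*(S_p)=S_{f(p)}$, which for $f(p)=xy$ equals $S_xS_y$ by multiplicativity of paths. This gives all three displayed formulas at once with no further work.

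The real content is the inverse \eqref{ifureduction}. I would define $\Psi\colon C^*(E)\to C^*(E_{R+})$ by universality. Put $\Psi(P_v):=P_v$ for all $v\in E^0=E^0_{R+}$ and $\Psi(S_p):=S_p$ for $p\in E^1\setminus t_E^{-1}(w)$. For $p\in t_E^{-1}(w)$ put $\Psi(S_p):=\sum_{y\in s_E^{-1}(w)}S_{py}S_y^*$, where $py$ denotes the corresponding edge of $E_{R+}$ and $y\in s_E^{-1}(w)$ is still an edge of $E_{R+}$, since $w$ emits no loop. The sum is finite because $w$ is regular. I would then verify that these elements form a Cuntz--Krieger $E$-family.

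\begin{itemize}
\item \emph{Mutually orthogonal projections.} This is immediate.
\item \emph{Relation (CK1).} For $p,p'\in t_E^{-1}(w)$ we get $\sum_{y,y'}S_yS_{py}^*S_{p'y'}S_{y'}^*=\delta_{p,p'}\sum_y S_yS_y^*=\delta_{p,p'}P_w$. Here the last step uses that $w$ is regular in $E_{R+}$, with the same edges $s^{-1}(w)$. Mixed products with edges not targeting $w$ vanish because their targets differ.
\item \emph{Cuntz--Krieger relation at regular $u\ne w$.} We have $\sum_{p\in s_E^{-1}(u)}\Psi(S_pS_p^*)$, and for $p$ targeting $w$ the summand is $\sum_{y,y'}S_{py}S_y^*S_{y'}S_{py'}^*=\sum_y S_{py}S_{py}^*$. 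This is exactly the CK sum over $s_{E_{R+}}^{-1}(u)$, whose elements are the $p\notin t^{-1}(w)$ together with the composites $py$.
\item \emph{Cuntz--Krieger relation at $w$.} The edges of $E$ from $w$ are the same in $E_{R+}$ and do not target $w$, so this holds trivially.
\end{itemize}

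The two compositions are then checked on generators. For $\Psi\circ f_*$ on the new edge $py$ we get $\Psi(S_pS_y)=\sum_{y'}S_{py'}S_{y'}^*S_y=S_{py}$. For $f_*\circ\Psi$ on $S_p$ with $t(p)=w$ we get $\sum_y S_pS_yS_y^*=S_pP_w=S_p$, by the Cuntz--Krieger relation at $w$ in $E$. So $\Psi=(f_*)^{-1}$.

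I expect the main obstacle to be the bookkeeping in the Cuntz--Krieger relation at vertices $u$ emitting edges into $w$: one must identify $s_{E_{R+}}^{-1}(u)$ correctly and use the absence of loops at $w$, so that no composite $py$ again targets $w$. As an alternative to the universal-property check, one could cite \cite{eilers2019refined,KOC20201297}, where this isomorphism appears.
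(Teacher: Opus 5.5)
Your proof is correct. It differs from the paper mainly in that the paper gives no argument for this lemma: it is stated with ``cf.'' and left to the references cited in its header.

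You split the statement sensibly. The three displayed formulas for $f_*$ follow at once from the functor $\mathsf{RMIPG}\subseteq\mathsf{REG}\to\mathsf{C^*}\text{-}\mathsf{Alg}$, because $\varphi_f(p,t_E(p))=(f(p),t(f(p)))$ is sent to $S_{f(p)}$. The real content is that the unital-reduction map is invertible with inverse \eqref{ifureduction}. You prove this by building $\Psi$ from the universal property of $C^*(E)$ and checking both composites on generators.

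This route is self-contained and needs no uniqueness theorem, gauge-invariant or otherwise. It also shows exactly where the hypotheses enter:
\begin{itemize}
\item Regularity of $w$ in $E$ gives $f_*\circ\Psi=\mathrm{id}$ on $S_p$ for $p\in t_E^{-1}(w)$.
\item The absence of loops at $w$ gives $s^{-1}_{E_{R+}}(w)=s^{-1}_E(w)$ and $\Psi(S_y)=S_y$.
\item Regularity of $w$ in $E_{R+}$ is what your (CK1) check uses.
\end{itemize}

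\textbf{One correction.} In the mixed case $p\in t_E^{-1}(w)$, $p'\notin t_E^{-1}(w)$, the product $\Psi(S_p)^*\Psi(S_{p'})=\sum_y S_yS_{py}^*S_{p'}$ vanishes because $py\neq p'$ are distinct edges of $E_{R+}$. It does not vanish because of differing targets. Indeed $t_{E_{R+}}(py)=t_E(y)$ can equal $t_E(p')$. Moreover, partial isometries with orthogonal source projections can still have $a^*b\neq 0$.

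\textbf{Two small omissions.} Neither affects the argument, but both should be recorded.
\begin{itemize}
\item State that a regular vertex $u$ of $E$ remains regular in $E_{R+}$. Its edge set in $E_{R+}$ is finite because $s_E^{-1}(w)$ is finite, and nonempty because $s_E^{-1}(w)\neq\emptyset$. You use this when invoking the Cuntz--Krieger relation at $u$ in $C^*(E_{R+})$.
\item Record the routine relations $P_{s_E(p)}\Psi(S_p)=\Psi(S_p)=\Psi(S_p)P_{t_E(p)}$ for $p\in t_E^{-1}(w)$.
\end{itemize}
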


\begin{theorem}
\label{thm:ured}
Let $E$ be a graph and  $E_{R+}$ be its unital reduction at $w$ such that $s_E^{-1}(w)=\{ e\} $, and $t_E(e) \neq w $. 
Then the maps $\varphi^0\colon E^0\longrightarrow S(E_{R+})$ and $\varphi^1\colon E^1\longrightarrow S(E_{R+}) $ given by
\begin{gather*}
	\varphi^0(u) := \begin{cases} (u,u)  & u \neq w\\
						(e,e) & u = w
	\end{cases}\\
	\varphi^1(g) := \begin{cases}
	 (g,t_E(g))  & t(g) \neq w \\
	(ge,e) &  t(g) = w \\
	\end{cases}
\end{gather*}
determine a regular extended path homomorphism of graphs $\varphi\colon S(E)\longrightarrow S(E_{R+})$
 that covariantly induces the inverse $*$-isomorphism \eqref{ifureduction}:
$\varphi_*=(f_*)^{-1}$.
\end{theorem}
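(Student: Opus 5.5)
The plan is to follow the template of the proofs of Theorem~\ref{thm:maxoutsplit} and Theorem~\ref{thm:shiftinv}. First I will check the four axioms listed below Definition~\ref{semi}, so that $(\varphi^0,\varphi^1)$ extends to an extended path homomorphism $\varphi\colon S(E)\to S(E_{R+})$. Then I will verify regularity, and finally compare $\varphi_*$ with \eqref{ifureduction}.

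Before the casework I will record how $E_{R+}$ looks near $w$.
\begin{itemize}
\item $s_{E_{R+}}^{-1}(w)=\{e\}$, and $e$ is an edge of $E_{R+}$ because $t_E(e)\neq w$.
\item $w$ receives no edges in $E_{R+}$.
\item For every $g\in t_E^{-1}(w)$, the symbol $ge$ is a single edge of $E_{R+}$ from $s_E(g)$ to $t_E(e)$.
\end{itemize}
Consequently $(ge,e)\in S(E_{R+})$ is well defined, since both paths end at $t_E(e)$. Also $(e,e)\le(w,w)$.

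\textbf{Axiom (1).} For two vertices different from $w$ the check is trivial. For $u\neq w$ we get $(u,u)(e,e)=0$, since $s(e)=w$. Finally $(e,e)^2=(e,e)$.

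\textbf{Axioms (2)--(3).} I will split into three cases.
\begin{itemize}
\item $g=e$. Then $\varphi^0(w)\varphi^1(e)=(e,e)(e,t(e))=(e,t(e))$, and on the right we multiply by $(t(e),t(e))$ with $t(e)\neq w$.
\item $t_E(g)\neq w$ and $g\neq e$. Then $s(g)\neq w$, so everything is as in $E$.
\item $t_E(g)=w$. Then $(s(g),s(g))(ge,e)=(ge,e)$ and $(ge,e)(e,e)=(ge,e)$.
\end{itemize}

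\textbf{Axiom (4).} Here the key point is that comparability of paths in $E_{R+}$ is decided by first edges.
\begin{itemize}
\item $(e,ge)(g'e,e)=\delta_{g,g'}(e,e)$, because $ge$ and $g'e$ are edges of $E_{R+}$.
\item $(e,ge)(g',t(g'))=0$ whenever $t(g')\neq w$, because the edge $g'$ is not the edge $ge$.
\item Both edges with targets different from $w$: this is the usual relation.
\end{itemize}
The adjoint cases follow by applying $*$.

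\textbf{Regularity.} I will use Proposition~\ref{rmk:regularity}, and the cover will be literally the set of edges of $E_{R+}$ emitted from $t_F(\beta)$.
\begin{itemize}
\item $u=w$. We have $\varphi(w,w)=(e,e)$, and $s_E^{-1}(w)=\{e\}$ with $\varphi(e,e)=(e,e)$. So $\delta_e=t(e)$, which trivially covers $t(e)$.
\item $u\neq w$ regular. We have $\varphi(u,u)=(u,u)$. The edges $g\in s_E^{-1}(u)$ give $\delta_g=g$ if $t(g)\neq w$, and $\delta_g=ge$ if $t(g)=w$. Because $w$ emits only $e$, this is a bijection onto $s_{E_{R+}}^{-1}(u)$. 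The set of all edges emitted by $u$ is an orthogonal cover of $u$, since $u$ is a regular vertex of $E_{R+}$.
\end{itemize}
Alternatively, regularity follows from Lemma~\ref{lem:from.hom.to.reg} by taking $\Phi=(f_*)^{-1}$.

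\textbf{The induced homomorphism.} By \eqref{explicit} we get:
\begin{itemize}
\item $\varphi_*(S_u)=S_u$ for $u\neq w$;
\item $\varphi_*(S_w)=S_eS_e^*$, which equals $S_w$ in $L(E_{R+})$ by the Cuntz--Krieger relation at $w$;
\item $\varphi_*(S_g)=S_g$ if $t(g)\neq w$, and $\varphi_*(S_g)=S_{ge}S_e^*$ if $t(g)=w$.
\end{itemize}
The last formula is exactly \eqref{ifureduction} with $s_E^{-1}(w)=\{e\}$, so $\varphi_*=(f_*)^{-1}$.

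The main obstacle is bookkeeping rather than depth. One must keep track of the fact that $ge$ is a single edge of $E_{R+}$ and not a path of length two, and use the hypotheses $s_E^{-1}(w)=\{e\}$ and $t_E(e)\neq w$ precisely where the vanishing products and the cover property need them. Without the second hypothesis, $e$ would not even be an edge of $E_{R+}$.
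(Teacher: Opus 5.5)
Your proposal is correct and follows the same route as the paper. You check the four axioms by essentially the same casework, verify regularity, and identify $\varphi_*$ with \eqref{ifureduction} on generators using $S_eS_e^*=S_w$; the only cosmetic difference is that you phrase regularity via Proposition~\ref{rmk:regularity} with the bijection $g\mapsto\delta_g$ from $s_E^{-1}(u)$ onto $s_{E_{R+}}^{-1}(u)$, which handles first edges of the form $ge$ more explicitly than the paper's direct check of Definition~\ref{def:regularity}.
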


\begin{proof}

Similar as to the proof of Theorem \ref{thm:maxoutsplit} we will check the four necessary axioms that $\varphi^0$ and $\varphi^1$ need to satisfy in order to lift to a map $\varphi:S(E) \to S(E_{R^+})$.

\underline{Axiom (1):}	Let $u,u'\in E_{(v,w)}^0$. We need to check that
\begin{equation}
	\varphi^0(u)\varphi^0(u')=\delta_{u,u'}\varphi^0(u).
\end{equation}
We can distinguish three different cases: first suppose that $u,u'\neq w$. Then everything trivializes as we get
\begin{equation}
	\varphi^0(u)\varphi^0(u') = (u,u)(u',u') = \delta_{u,u'}(u,u) = \delta_{u,u'}\varphi^0(u).
\end{equation}
Next, assume that $u=w$ and $u'\neq w$. We see that
\begin{equation}
	\varphi^0(w)\varphi^0(u') = (e,e)(u',u') = 0,
\end{equation}
as $u'\neq w$ must not be the starting vertex of $e$. The dual case of $u\neq w, u'=w$ follows by commutativity of idempotents. 
Lastly, if $u=u'=w$ then we get
\begin{equation}
	\varphi^0(w)\varphi^0(w) = (e,e)(e,e) = (e,e)=\varphi^0(w)
\end{equation}
and we are done with Axiom 1.

\underline{Axioms (2)-(3):} For $g\in E^1$ we need to check that the following formulas hold
\begin{equation}
	\varphi^0(s_{E}(g))\varphi^1(g) = \varphi^1(g) = \varphi^1(g)\varphi^0(t_{E}(g)).
\end{equation}
Again, this will be done via casework. Suppose that $s_E(g) \neq t_E(g) \neq w$, then we have
\begin{gather}
	\varphi^0(s_{E}(g))\varphi^1(g) = (s_{E}(g),s_{E}(g))(g, t_E(g)) = (g, t_E(g))=\varphi^1(g),\\
	\varphi^1(g)\varphi^0(t_E(g)) = (g, t_E(g))(t_E(g),t_E(g)) = (g, t_E(g))=\varphi^1(g).
\end{gather}
Next, suppose that $ t_E(g) \neq w$. (which means $s_E(g) \neq w$). Hence
\begin{gather}
	\varphi^0(s_{E}(g))\varphi^1(g) = (s_E(g),s_E(g))(ge,e) = (ge,e)=\varphi^1(g),\\
	\varphi^1(g)\varphi^0(w) = (ge,e)(e,e) = (ge,e)=\varphi^1(g).
\end{gather}
Finally, let $s_E(g)=w$ (so $g=e$). Here we get
\begin{gather}
	\varphi^0(w)\varphi^1(e) = (e,e) (e,t_E(e)) = (e,t_E(e)) = \varphi^1(e),\\
	\varphi^1(e)\varphi^0(t_E(e)) = (e,t_E(e))(t_E(e),t_E(e)) = (e,t_E(e)) = \varphi^1(e).
\end{gather}
\underline{Axiom (4):} Here we need to check that for $g,g'\in E^1$ we have
\begin{equation}
	\varphi^1(g)^*\varphi^1(g') = \delta_{g,g'}\varphi^0(t_{E}(g)).
\end{equation}
As always, we shall consider some cases. Let us start with the trivial case, $g,g') \neq e $ and $t_E(g),t_E(g')\neq w$. In this scenario we get
\begin{equation}
	\varphi^1(g)^*\varphi^1(g') = (t_E(g),g)(g',t_E(g')) = \delta_{g,g'}(t_E(g),t_E(g)) = \delta_{g,g'}\varphi^0(t_{E}(g)).
\end{equation}
Next, let $t_E(g)=w$ and $t_E(e')\neq w$. If we also have $g' \neq e$, then we see that
\begin{equation}
	\varphi^1(g)^*\varphi^1(g') = (e,ge)(g',t_E(g)) = 0.
\end{equation}
When $t_E(g)=w$ and $g' = e$, then we see that
\begin{equation}
	\varphi^1(g)^*\varphi^1(g') = (e,ge)(e,t_E(g')) = 0.
\end{equation}

Now, let $t_E(g) \neq w$ and $t_E(g')= w$. If we also have $g \neq e$, then we see that
\begin{equation}
	\varphi^1(g)^*\varphi^1(g') = (t_E(g),g)(g'e,e) = 0.
\end{equation}
When $t_E(g')=w$ and $g = e$, then we see that
\begin{equation}
	\varphi^1(g)^*\varphi^1(g') = (t_E(e),e)(g'e,e) = 0.
\end{equation}  

Next, let $t_E(g)=w=t_E(g')$. We see that
\begin{equation}
	\varphi^1(g)^*\varphi^1(g') = \delta{g,g'}(e,ge)(g'e,e) = \delta{g,g'} (e,e)=\delta{g,g'} \varphi^0(t_E(w)).
\end{equation}

Lastly, consider $g=g'=e$. We have
\begin{equation}
	\varphi^1(e)^*\varphi^1(e') = (t_E(e),e)(e,t_E(e)) =  (t_E(e),t_E(e))=\varphi^0(t_E(e)) .
\end{equation}

As for the regularity, let us start with a vertex $u\in\mathrm{reg}(E)$ that is not $w$. Then $\varphi(u,u)=(u,u)$ and so any idempotent $t\le (u,u)$ is of the form $t=(\alpha,\alpha)$ with $\alpha\in \mathrm{FP}(E_{R^+})$ being a path that starts at $u$. If $\alpha=u$, then $(\alpha,\alpha)\varphi(g,g) \neq 0$ for any $g\in s_{E}^{-1}(u)$. Otherwise, let $h$ be the first edge of $\alpha$. In $E_{R^+}$, there is no arrow with $w$ as it's target. Thus, for $t_E(h) \neq w$, we have $(\alpha,\alpha) \varphi(h,h) \neq 0$.
If, on the other hand, if $u = w$. Then $\varphi(w,w)=(e,e)$ and so any idempotent $t\le (e,e)$ is of the form $t=(\alpha,\alpha)$ with $\alpha\in \mathrm{FP}(E_{R^+})$ being a path that starts with $e$. As there is only one arrow $e$ eminating from $w$, all paths starting at w mist start with $e$. So   $(\alpha,\alpha) \varphi(e,e) \neq 0$.

To see that the covariantly induced map $\varphi_*\colon C^*(E)\to C^*(E_{R^+})$ is exactly the inverse $(f^*)^{-1}\colon C^*(E)\to C^*(E_{R^+})$ which was given in $\eqref{ifureduction}$, we observe that $S_eS_e^*=S_w$ since $s_E^{-1}(w)=\{e\}$. The expressions for $\varphi_*$ and $(f^*)^{-1}$ that are otherwise immediately the same. Hence $\varphi_*=(f^*)^{-1}$, concluding the proof.

\end{proof}

Any unital reduction can be obtained by combining the maximal outsplit at $w$, the above theorem, and then a graph homomorpism:
\begin{corollary}
Let $E$ be a graph and  $E_{R+}$ be its unital reduction at $w$. 
Then the inverse $*$-isomorphism  $(f_*)^{-1}$ as given by  \eqref{ifureduction}, can be 
realized as a composition
\begin{equation}
	(f_*)^{-1} = g^*\circ \varphi_*,
\end{equation}
where $g$ is an admissible graph homomorphism and $\varphi$ is a regular extended graph homomorphism.
\end{corollary}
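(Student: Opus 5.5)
The plan is to copy the proof of the shift corollary, replacing the shift by a unital reduction and using Theorem~\ref{thm:ured} as the local step. The reason is that Theorem~\ref{thm:ured} only handles the case $s_E^{-1}(w)=\{e\}$. So we first out-split maximally at $w$, which turns $w$ into vertices each emitting exactly one edge. One point about the shape of the answer: the argument below produces $(f_*)^{-1}$ as the $*$-homomorphism induced covariantly by a regular extended path homomorphism, precomposed with a contravariantly induced map $\widehat f^*$ of an admissible graph homomorphism. This is exactly the shape obtained in the shift corollary, and I read ``$g^*\circ\varphi_*$'' in the statement in that sense.

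\textbf{Step 1: out-split $E$ at $w$.} Write $s_E^{-1}(w)=\{e_1,\ldots,e_n\}$, and let $E_{\widehat O}$ be the maximal out-split of $E$ at $w$. Let $\widehat f\colon E_{\widehat O}\to E$ be the multi-out-split homomorphism of Definition~\ref{MOHom}; it is admissible. Each $w^i$ emits exactly one edge, namely $e_i^1$. Since $w$ emits no loop, $t(e_i^1)\neq w^j$ for all $j$.

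\textbf{Step 2: reduce $E_{\widehat O}$ at all the $w^i$.} Apply the unital reduction successively at $w^1,\ldots,w^n$, and let $G:=(E_{\widehat O})_{R+}$ be the result. At each stage the hypotheses of Theorem~\ref{thm:ured} hold: $w^i$ still emits the single non-loop edge $e_i^1$, because reducing at one $w^j$ does not create edges ending at any $w^i$. Composing the resulting regular extended path homomorphisms gives $\psi\colon S(E_{\widehat O})\to S(G)$ with $\psi_*=(F_*)^{-1}$, where $F_*\colon C^*(G)\to C^*(E_{\widehat O})$ is the composite of the reduction isomorphisms. Here we use that regular extended path homomorphisms compose inside $\mathsf{REG}$.

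\textbf{Step 3: identify $G$ with an out-split of $E_{R+}$.} I would check directly from the definitions that $G$ is the maximal out-split of $E_{R+}$ at $w$. In $E_{R+}$, the vertex $w$ is a regular source emitting the same edges as in $E$; the out-split creates $w^i$ emitting $e_i$, and the edges $xe_i$ of $E_{R+}$ lift to the edges $x e_i^1$ of $G$. Let $\widehat F\colon G\to E_{R+}$ be the corresponding collapse. By Theorem~\ref{thm:maxoutsplit} there is a regular $\Phi\colon S(G)\to S(E_{R+})$ with $\Phi_*=(\widehat F^*)^{-1}$.

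\textbf{Step 4: assemble the square.} Verify on generators that
\[
\widehat F^*\circ f_*=F_*^{-1}\circ\ldots\quad\text{more precisely}\quad \widehat f^*\circ f_*=F_*\circ\widehat F^* ,
\]
which is the analogue of the commutative square in the shift corollary. This is the main obstacle: the check is a bookkeeping comparison of the formulas in Lemma~\ref{lem:outsplit} and \eqref{ifureduction}, separating edges with target $w$ from all other edges. It is the step I would do most carefully, since the indices $i$ of the out-split copies must match the edges $e_i$ absorbed by the reduction. Once the square holds, all maps in it are $*$-isomorphisms, and inverting gives
\[
(f_*)^{-1}=\Phi_*\circ\psi_*\circ\widehat f^{\,*}=(\Phi\circ\psi)_*\circ \widehat f^{\,*}.
\]
Here $\widehat f$ is admissible and $\Phi\circ\psi$ is a regular extended path homomorphism, as required.
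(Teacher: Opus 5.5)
Your proposal is correct and follows the paper's argument: maximally out-split at $w$, unitally reduce at every $w^i$ via Theorem~\ref{thm:ured}, identify the result with the maximal out-split of $E_{R+}$ at $w$ so that Theorem~\ref{thm:maxoutsplit} applies, and invert the commutative square. The square does commute; the only nontrivial generators are the $S_{xe_j}$, which go to $S_{x^j}S_{e_j^1}$ along both paths. Your final formula $(f_*)^{-1}=(\Phi\circ\psi)_*\circ\widehat f^{\,*}$ is the type-correct form of the paper's $(\phi\circ\psi)_*\circ\widehat F^{*}$, which was carried over from the shift case with the arrows reversed: its domain is $C^*(E_{R+})$ rather than $C^*(E)$, and $\phi\circ\psi$ does not compose. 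Like the paper, your formula has the shape $\varphi_*\circ g^*$ rather than the literal $g^*\circ\varphi_*$, as you flagged.
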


\begin{proof}
Consider the following two maximal out-splits -- let $E_{\widehat{O}}$ be the maximal out-split of $E$ at $w$ and $(E_{R^+})_{\widehat{O}}$ be the maximal out-split of the graph $E_{R+}$ at $w$. Let us also denote by $w^1,\dots,w^n$ the newly created vertices in both graphs. One can obtain the graph $(E_{R^+})_{\widehat{O}}$ from $E_{\widehat{O}}$ by doing unital reductions at each $w^i$ in $E_{\widehat{O}}$. Let $F\colon E_{\widehat{O}}\to (E_{R^+})_{\widehat{O}}$ be the composition of these unital reduction moves, $\widehat{f}\colon E_{\widehat{O}}\to E$ and $\widehat{F}\colon (E_{R^+})_{\widehat{O}}\to E_{R^+}$ be the collapses of maximal out-splits at $w$ in $E$ and $E_{R^+}$ respectively. Observe that since $w$ emits no loops, then every $w^i$ has exactly one edge emitting from it. Hence we can use Theorem~\ref{thm:ured} and claim that there exists a regular extended path homomorphism  of graphs $\psi\colon (E)_{\widehat{O}}\to (E_{R^+})_{\widehat{O}}$ such that $\psi_*=(F_*)^{-1}$, since we can describe inverses of all unital reductions that build up $F$ using regular extended graph homomorphisms. Likewise, by Theorem~\ref{thm:maxoutsplit} we can find regular extended graph homomorphisms $\phi\colon S(E_{\widehat{O}})\to S(E), \Phi\colon S((E_{R+})_{\widehat{O}})\to S(E_{R^+})$ such that
\begin{equation}
	\phi_* = (\widehat{f}^*)^{-1}, \Phi_* = (\widehat{F}^*)^{-1}.
\end{equation}
These regular extended graph homomorphisms fit into the following commutative diagram
\[\begin{tikzcd}[ampersand replacement=\&]
	{S(E)} \&\& {S(E_{R^+})} \\
	\\
	{S(E_{\widehat{O}})} \&\& {S((E_{R^+})_{\widehat{O}})}
	\arrow["f", from=1-1, to=1-3]
	\arrow["\phi"', from=3-1, to=1-1]
	\arrow["F", shift left, from=3-1, to=3-3]
	\arrow["\Phi"', from=3-3, to=1-3]
	\arrow["\psi", shift left, from=3-3, to=3-1]
\end{tikzcd}\]
After passing to C*-algebras all arrows become invertible and we get the following (still commutative) diagram
\[\begin{tikzcd}[ampersand replacement=\&]
	{C^*(E)} \&\& {C^*(E_{R^+})} \\
	\\
	{C^*(E_{\widehat{O}})} \&\& {C^*((E_{R^+})_{\widehat{O}})}
	\arrow["{{f_*}}", from=1-1, to=1-3]
	\arrow["{{\widehat{f}^*}}"', shift right, from=1-1, to=3-1]
	\arrow["{{\widehat{F}^*}}", shift left, from=1-3, to=3-3]
	\arrow["{{\phi_*}}"', shift right, from=3-1, to=1-1]
	\arrow["{{F_*}}", shift left, from=3-1, to=3-3]
	\arrow["{{\Phi_*}}", shift left, from=3-3, to=1-3]
	\arrow["{{\psi_*}}", shift left, from=3-3, to=3-1]
\end{tikzcd}\]
Therefore we get our desired decomposition for $(f_*)^{-1}$:
\begin{equation}
	(f_*)^{-1} = (\phi\circ\psi)_*\circ\widehat{F}^*.
\end{equation}
\end{proof}
\subsection{Balanced In-split}
\begin{definition}[\cite{eilers2019refined}]
Let $E$ be a graph and $w \in E^0$ be a regular vertex. Partition $t_E^{-1}(w)$ into a finite disjoint union of sets (of which some may be empty) 
$\epsilon_1 \cup ... \cup \epsilon_n $. Let $E_I$ denote the graph:
\begin{gather*}
E_I^0:= \{v^1 \, | \,  v \in E^0 \text{ and } v \neq w \} \sqcup \{w^1, \ldots, w^n \},\\
E_I^1:= \{e^1 \, | \,  e \in E^1 \text{ and } s_E(e) \neq w \} \sqcup \{e^i \, | \, s_E(e) = w \text{ and } 1 \leq i \leq n\},\\
s_{E_I}(e^i):=s_E(e)^i, \qquad
t_{E_I}(e^i):=\begin{cases} t_E(e)^1 & t_E(e) \neq w, \\
					w^j & t_E(e) =  w \text{ and } e \in \epsilon_j. \\
		\end{cases}		
\end{gather*}
We say that $E_I$ is formed by performing an \emph{in-split} at $w$ on $E$.
\end{definition}
\begin{definition}[\cite{eilers2019refined}] Let $E_L$ and $E_R$ be the graphs resulting from an in-split at $w$, 
where $t_E^{-1}(w)$ has (for each in-split) been 
partitioned into $n$ sets $\mathcal{L}_1,\dots,\mathcal{L}_n$ and $\mathcal{R}_1,\dots,\mathcal{R}_n$ respectively. 
We then say that  $E_L$ and $E_R$ 
are formed by preforming a \emph{balanced in-split} of $E$ at $w$.
\end{definition}
\noindent
To simplify the notation, for any $e\in E^1$, the induced edge both in $E_L$ and $E_R$ will be denoted by the same symbol, e.g.,~$e^i$,
despite the fact that for $e\in \mathcal{L}_j\cap\mathcal{R}_k$ we will have
\begin{equation}
	t_{E_L}(e^i) = w^j,\quad t_{E_R}(e^i) = w^k.
\end{equation}

\begin{lemma}[cf.\ \cite{eilers2019refined}]\label{lemma:in-splitplus}
Let $E$ be a graph, and $E_L$ and $E_R$  formed by performing a balanced in-split of $E$  at $w$. 
Then the following formulas define mutually inverse $*$-isomorphisms:
\begin{gather}
\phi_*\colon C^*(E_L)\longrightarrow C^*(E_R),\quad
    \phi_*( S_{v^i})  =S_{v^i}  \, ,\quad v^i \in E^0_L, \nonumber\\
	    \phi_* (S_{e^i})  =
	    \begin{cases} S_{e^i}  & e^i \in E^1_L \text{ and } t_E(e) \neq w\\
 S_{e^i}\sum_{f\in s_E^{-1}(w)}S_{f^k}S_{f^j}^* & e^i \in E^1_L \, , \, e \in \mathcal{L}_j \cap \mathcal{R}_k\\ 
 \end{cases},\label{phi}\\
  \varphi_*\colon C^*(E_R)\longrightarrow C^*(E_L),\quad
   \varphi_*(S_{v^i})  =S_{v^i}  \, ,\quad v^i \in E^0_R, \nonumber\\
    \varphi_* (S_{e^i})  =\begin{cases} S_{e^i}  & e^i \in E^1_R \text{ and } t_E(e) \neq w\\
 S_{e^i}\sum_{f\in s_E^{-1}(w)}S_{f^k}S_{f^j}^*& e^i \in E^1_R \, , \,e \in \mathcal{R}_j \cap \mathcal{L}_k\\ 
 \end{cases}.\label{varphi}
\end{gather}
\end{lemma}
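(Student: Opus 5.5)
The plan is to verify the Cuntz--Krieger relations directly. I will show that the elements prescribed by \eqref{phi} form a Cuntz--Krieger $E_L$-family inside $C^*(E_R)$, so that $\phi_*$ exists by universality. By the symmetry $L\leftrightarrow R$, the same argument produces $\varphi_*$. It will then be enough to check on generators that $\varphi_*\circ\phi_*$ and $\phi_*\circ\varphi_*$ are identities. Before that, I record the basic facts about the in-split graphs. For $f\in s_E^{-1}(w)$ the edges $f^1,\dots,f^n$ start at $w^1,\dots,w^n$ respectively, and $t(f^j)=t(f^k)$. The vertex $w^j$ emits exactly $\{f^j\mid f\in s_E^{-1}(w)\}$, so $w^j$ is regular and $S_{w^j}=\sum_f S_{f^j}S_{f^j}^*$ in both $C^*(E_L)$ and $C^*(E_R)$. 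Set $U_{kj}:=\sum_{f\in s_E^{-1}(w)}S_{f^k}S_{f^j}^*\in C^*(E_R)$. Using (CK1) and the equality of targets, I expect $U_{kj}^*U_{kj}=S_{w^j}$ and $U_{kj}U_{kj}^*=S_{w^k}$, so $U_{kj}$ is a partial isometry from $w^j$ to $w^k$. I also expect $U_{kj}U_{jl}=U_{kl}$; this should follow by expanding the product and using $S_{f^j}^*S_{g^j}=\delta_{f,g}S_{t(f^j)}$.

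For the $E_L$-family, vertices go to themselves, so the orthogonal projections are unchanged. For an edge $e^i$ with $e\in\mathcal L_j\cap\mathcal R_k$, the image $T_{e^i}=S_{e^i}U_{kj}$ is well typed: in $E_R$ the edge $e^i$ ends at $w^k$, which is exactly where $U_{kj}$ starts. Then $T_{e^i}^*T_{e^i}=U_{kj}^*S_{w^k}U_{kj}=S_{w^j}=S_{t_{E_L}(e^i)}$. Orthogonality of the ranges is inherited, since $T_{e^i}T_{e^i}^*=S_{e^i}S_{w^k}S_{e^i}^*=S_{e^i}S_{e^i}^*$. For the same reason the relation (CK2) at each regular vertex of $E_L$ transfers verbatim from $E_R$: the sources and the source-fibres are identical in $E_L$ and $E_R$, only the targets differ. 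Hence universality gives $\phi_*$, and symmetrically $\varphi_*$.

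Finally, I compose on generators. For $e\in\mathcal L_j\cap\mathcal R_k$ we have $e\in\mathcal R_k\cap\mathcal L_j$, so
\[
\varphi_*\phi_*(S_{e^i})=\varphi_*(S_{e^i})\,\varphi_*(U_{kj})=S_{e^i}U^L_{jk}\,\varphi_*(U_{kj}).
\]
Here the edges $f^k$ with $s(f)=w$ do not themselves end at $w$ unless $t(f)=w$. The main obstacle is edges $f\in s_E^{-1}(w)$ with $t_E(f)=w$, that is, loops at $w$: then $\varphi_*(S_{f^k})$ is itself corrected by a $U$-factor, so $\varphi_*(U_{kj})$ is not simply $U^L_{kj}$. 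I would handle this by showing $\varphi_*(U_{kj})=U^L_{kj}$ anyway. The point is that in $\varphi_*(S_{f^k})\varphi_*(S_{f^j})^*$ the correction factors coincide, because both $f^k$ and $f^j$ come from the same $f$ lying in the same $\mathcal R$ and $\mathcal L$ classes, and they cancel via $U U^*=S_{w^\cdot}$. Then $U^L_{jk}U^L_{kj}=U^L_{jj}=S_{w^j}$, and $S_{e^i}S_{w^j}=S_{e^i}$. The other composite is handled symmetrically, which completes the proof.
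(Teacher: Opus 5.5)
Your proof is correct. The paper itself gives no proof of this lemma: it is quoted from Eilers--Ruiz. The paper's closest result, Theorem~\ref{thm:in-splitplus}, realizes $\phi_*$ as the map covariantly induced by a regular extended path homomorphism only when $s_E^{-1}(w)=\{f\}$. In that case your $U_{kj}$ is the single term $S_{f^k}S_{f^j}^*=S_{(f^k,f^j)}$. Even there, the paper gets that $\phi_*$ and $\varphi_*$ are mutually inverse by citing this very lemma. The loop-free multiple-emitter case in Corollary~\ref{cor:in-splitplus} also rests on it.

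Your route is a direct, self-contained check at the C*-level:
\begin{itemize}
\item The identities $U_{kj}^*U_{kj}=S_{w^j}$, $U_{kj}U_{kj}^*=S_{w^k}$ and $U_{kj}U_{jl}=U_{kl}$ rest only on $t(f^j)=t(f^k)$ and the regularity of each $w^j$.
\item These give a Cuntz--Krieger $E_L$-family in $C^*(E_R)$ with the same range projections as in $E_R$. Since sources and source fibres agree in $E_L$ and $E_R$, (CK2) transfers unchanged, and universality yields $\phi_*$ and, symmetrically, $\varphi_*$.
\item Invertibility comes from $\varphi_*(U_{kj})=U^L_{kj}$. For a loop $f\in\mathcal R_a\cap\mathcal L_b$ at $w$, both $f^k$ and $f^j$ get the same correction factor $U^L_{ba}$. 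It cancels because $U^L_{ba}(U^L_{ba})^*=S_{w^b}$ and $t_{E_L}(f^k)=w^b$, so that $S_{f^k}S_{w^b}=S_{f^k}$.
\end{itemize}

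What your approach buys is the full statement for an arbitrary regular $w$, with several emitted edges and loops at $w$. The semigroup machinery of the paper cannot reach that case directly, since $\sum_f S_{f^k}S_{f^j}^*$ is not the image of a single element of $S(E_R)$. What the paper's approach buys, in the cases it covers, is functoriality: $\phi_*$ arises from a morphism in $\mathsf{REG}$.
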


When the vertex $w$ at which we in-split emits exactly one edge, then the above formulas can be explained by a move transforming
the extended graph $\overline{E_L}$ to the extended graph~$\overline{E_R}$. Indeed, let $f\in E^1$ be the unique edge such that $s_E(f)=w$. 
Then the formulas
\begin{align}
	& \beta(v) = v, \quad v\in E_L^0=E_R^0,\nonumber\\
	& \beta(e^i) = \begin{cases}
		e^i & t_E(e) \neq w,\\
		e^i f^k(f^j)^* & e\in\mathcal{L}_j\cap\mathcal{R}_k
	\end{cases},
\end{align}
define an admissible path homomorphism $\beta:\overline{E_L}\to\overline{E_R}$.

\begin{theorem}\label{thm:in-splitplus}
Let $E$ be a graph, and $E_L$ and $E_R$ be its balanced in-split at $w$ such that $s_E^{-1}(w)=\{f\}$. 
Denote the partitions defining $E_L$ as $\mathcal{L}_1,\dots,\mathcal{L}_n$ and partitions defining $E_R$ as $\mathcal{R}_1,\dots,\mathcal{R}_n$. 
Then the maps $\phi^0\colon E_L^0\to S(E_R)$ and $\phi^1\colon E_L^1\to S(E_R)$ defined by
\begin{gather}
	\phi^0(v^i) = \begin{cases}
		(v^i,v^i )& v\neq w,\\
		(f^i,f^i) & v=w^i,
	\end{cases}\\
	\phi^1(e^i) = \begin{cases}
		(e^i,t_{E_R}(e_i)) & t_{E}(e)\neq w,\\
		(e^if^k,f^j) & t_E(e)=w,\; e\in \mathcal{L}_j\cap \mathcal{R}_k,
	\end{cases}
\end{gather}
yield a regular extended graph homomorphism $\phi\colon S(E_L)\to S(E_R)$ covariantly inducing the $*$-homomorphism~\eqref{phi}.
\end{theorem}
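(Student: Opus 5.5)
We follow the pattern of the proofs of Theorems~\ref{thm:maxoutsplit} and~\ref{thm:shiftinv}. First we show that $\phi^0,\phi^1$ satisfy the four axioms listed below Definition~\ref{semi}, so that they extend to an extended path homomorphism $\phi\colon S(E_L)\to S(E_R)$. Then we establish regularity. Finally we compare $\phi_*$ with~\eqref{phi}.

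\emph{Well-definedness and Axioms (1)--(4).} Since $s_E^{-1}(w)=\{f\}$, in both $E_L$ and $E_R$ each $w^i$ emits exactly the edge $f^i$, and $w^i$ is regular. The pairs $(f^i,f^i)$ and $(e^if^k,f^j)$ lie in $S(E_R)$ because $t_{E_R}(f^k)=t_{E_R}(f^j)$ for all $j,k$. If $t_E(f)\neq w$, both targets equal $t_E(f)^1$. If $f$ is a loop, both equal $w^{m}$, where $f\in\mathcal{R}_m$. We will flag this loop case explicitly throughout.

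Axiom (1) is immediate. For $v,u\neq w$ we get $(v^1,v^1)(u^1,u^1)=\delta_{u,v}(v^1,v^1)$. The product $(v^1,v^1)(f^i,f^i)$ vanishes because $s(f^i)=w^i\neq v^1$. Finally $(f^i,f^i)(f^{i'},f^{i'})=\delta_{i,i'}(f^i,f^i)$, since distinct edges are orthogonal.

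For Axioms (2)--(3) we split into cases according to whether $s_E(e)=w$ (that is, $e=f$) and whether $t_E(e)=w$. For instance, if $e\in\mathcal{L}_j\cap\mathcal{R}_k$, then
\[
\phi^0(s(e^i))\,(e^if^k,f^j)=(e^if^k,f^j)=(e^if^k,f^j)(f^j,f^j).
\]
When $e=f$, we use $(f^i,f^i)(f^i,\cdot)=(f^i,\cdot)$ and $(f^i,f^i)(f^if^k,f^j)=(f^if^k,f^j)$.

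For Axiom (4) the key computation is
\[
(f^j,e^if^k)(e^{i}f^{k},f^j)=(f^j,f^j)=\phi^0(w^j)=\phi^0(t_{E_L}(e^i)).
\]
All mixed products vanish because the first edges $e^i$ and $e'^{i'}$ are distinct edges of $E_R$, and hence the corresponding paths are orthogonal. The one exception is the comparison of $(e^i,t)$ with $(e^if^k,f^j)$, but that cannot occur since the cases are distinguished by $t_E(e)$.

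\emph{Regularity.} Rather than checking covers directly, we would apply Lemma~\ref{lem:from.hom.to.reg} with $\Phi$ the restriction of $\phi_*$ from Lemma~\ref{lemma:in-splitplus} to $L_{\mathbb{C}}(E_L)$. We must verify two families of identities.

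The first is $\phi_*(S_v)=S_{\phi(v,v)}$ for regular $v$. For $v^1$ with $v\neq w$ this is trivial. For $w^i$, the Cuntz--Krieger relation at the regular vertex $w^i$ of $E_R$, which emits only $f^i$, gives $S_{w^i}=S_{f^i}S_{f^i}^*$.

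The second is $\phi_*(S_{e^i}S_{e^i}^*)=S_{\phi(e^i,e^i)}$. When $t_E(e)=w$, write $e\in\mathcal{L}_j\cap\mathcal{R}_k$. Then~\eqref{phi} gives
\[
S_{e^i}\Big(\sum_{f} S_{f^k}S_{f^j}^*\Big)\Big(\sum_f S_{f^j}S_{f^k}^*\Big)S_{e^i}^*=S_{e^i}S_{f^k}S_{f^k}^*S_{e^i}^*=S_{(e^if^k,e^if^k)},
\]
using $S_{f^j}^*S_{f^j}=S_{t(f^j)}$. The sum over $f$ has the single term $f$, so this is as claimed.

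\emph{Comparison with~\eqref{phi}.} By~\eqref{explicit}, $\phi_*(S_{e^i})=S_{e^i}S_{f^k}S_{f^j}^*$, which matches~\eqref{phi} since $s_E^{-1}(w)=\{f\}$. On vertices, $\phi_*(S_{w^i})=S_{f^i}S_{f^i}^*=S_{w^i}$, again by the relation at $w^i$.

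We expect the main obstacle to be the bookkeeping when $f$ is a loop at $w$. Then $f$ itself belongs to some $\mathcal{L}_j\cap\mathcal{R}_k$, so $\phi^1(f^i)=(f^if^k,f^j)$, and the source/target axioms and the regularity computation mix the two roles of $f^i$. We would treat this case separately, as was done for the loop case in Theorem~\ref{thm:shiftinv}.
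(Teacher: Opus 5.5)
Your proposal is correct. For Axioms (1)--(4) and for identifying $\phi_*$ with \eqref{phi}, it follows essentially the same casework as the paper. The real difference is in how you prove regularity.

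\textbf{The paper's route.} It argues directly at the semigroup level. For a regular $v^i$ and an idempotent $(\alpha,\alpha)\le\phi(v^i,v^i)$, it takes the first edge $e^i$ of $\alpha$. It then shows that the path underlying $\phi(e^i,e^i)$, namely $e^i$ or $e^if^k$, is comparable with $\alpha$, using that $f^k$ is the only edge leaving $w^k$. When $f$ is a loop it needs a separate subcase: there $\alpha$ is $f^i$ followed by a power of the loop $f^k$.

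\textbf{Your route.} You apply Lemma~\ref{lem:from.hom.to.reg} with $\Phi=\phi_*|_{L_{\mathbb C}(E_L)}$, which is the shortcut the paper itself points to in the Remark after Theorem~\ref{thm:maxoutsplit}. Regularity then reduces to two Cuntz--Krieger identities in $L_{\mathbb C}(E_R)$: $S_{w^i}=S_{f^i}S_{f^i}^*$, and $S_{f^j}^*S_{f^j}=S_{t_{E_R}(f^j)}=S_{t_{E_R}(f^k)}$.

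\textbf{What each buys.}
\begin{itemize}
\item Your argument is shorter and uniform. The loop case you flag as the main obstacle needs no separate treatment in your regularity step, because $t_{E_R}(f^j)=t_{E_R}(f^k)$ holds whether or not $f$ is a loop.
\item The price is that you take Lemma~\ref{lemma:in-splitplus} as input, that is, you assume \eqref{phi} already defines a $*$-homomorphism. You should also say explicitly that it maps $L_{\mathbb C}(E_L)$ into $L_{\mathbb C}(E_R)$; this holds because the images of the generators are finite expressions, together with $L_{\mathbb C}\subseteq C^*$.
\item The paper's direct check is self-contained at the semigroup level. Combined with the functor from $\mathsf{REG}$ to $\mathsf{C^*}\text{-}\mathsf{Alg}$, it yields the existence of the $*$-homomorphism \eqref{phi} as a consequence rather than assuming it.
\end{itemize}
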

\begin{proof}
	First, let us start with proving that $\phi^0$ and $\phi^1$ determine an extended path homomorphism $\phi\colon S(E_L)\to S(E_R)$.
	
	\underline{Axiom (1):} Let $v^i,u^j\in E_L^0$. We want to show that
	\begin{equation}
		\phi^0(v^i)\phi^0(u^j) = \delta_{v^i,u^j}\phi^0(v^i).
	\end{equation}
	We need to distinguish three cases. First, if both $v^i$and $u^j$ are not any of the $w^l$'s, then everything trivializes:
	\begin{equation}
		\phi^0(v^i)\phi^0(u^j) = (v^i,v^i)(u^j,u^j) = \delta_{v^i,u^j}(v^i,v^i) = \phi^0(v^i).
	\end{equation}
	Suppose that $v^i=w^i$ and $u^j$ is not any of the $w^l$'s. Our formula then becomes
	\begin{equation}
		\phi^0(w^i)\phi^0(u^j) = (f^i,f^i)(u^j,u^j) 
	\end{equation}
	note that $f^i$ and $u^j$ are comparable if and only if $w^i=s_{E_L}(f^i)=u^j$, but this cannot possible be the case. 
	Hence the above product vanishes. Lastly, consider $v^i=w^i$ and $u^j=w^j$. We have
	\begin{equation}
		\phi^0(w^i)\phi^0(w^j) = (f^i,f^i)(f^j,f^j).
	\end{equation}
	Of course $f^i$ is comparable to $f^j$ if and only if they are equal, hence we get exactly what we want
	\begin{equation}
		\phi^0(w^i)\phi^0(w^j) = \delta_{w^i,w^j}(f^i,f^i) = \delta_{w^i,w^j}\phi^0(w^i).
	\end{equation}
	
	\underline{Axioms (2)-(3):} Given $e^i\in E_L^1$ we need to prove that
	\begin{equation}
		\phi^0(s_{E_L}(e^i))\phi^1(e^i) = \phi^1(e^i) = \phi^1(e^i)\phi^0(t_{E_L}(e^i)).
	\end{equation}
	First, let us assume that $t_E(e)\neq w$. If, further, $s_E(e)\neq w$, then the formulas are trivial to check
	\begin{gather}
		\phi^0(s_{E_L}(e^i))\phi^1(e^i) = (s_{E_L}(e^i),s_{E_L}(e^i))(e^i,t_{E_R}(e^i)) = (e^i,t_{E_R}(e^i)) = \phi^1(e^i),\\
		\phi^1(e^i)\phi^0(t_{E_L}(e^i)) = (e^i,t_{E_R}(e^i))(t_{E_L}(e^i),t_{E_L}(e^i)) = (e^i,t_{E_R}(e^i)) = \phi^1(e^i),
	\end{gather}
	If, on the other hand, $s_E(e)=w$, then $e=f$ and our formulas become
	\begin{gather}
		\phi^0(w^i)\phi^1(f^i) = (f^i,f^i)(f^i,t_{E_R}(f^i)) = (f^i, t_{E_R}(f^i)) = \phi^1(f^i),\\
		\phi^1(f^i)\phi^0(t_{E_L}(f^i)) = (f^i,t_{E_R}(f^i))(t_{E_L}(f^i),t_{E_L}(f^i)) = (f^i,t_{E_R}(f^i)) = \phi^1(f^i).
	\end{gather}
	Now suppose that $t_E(e)=w$ and further assume that $e\in\mathcal{L}_j\cap\mathcal{R}_k$. Similar to our reasoning above, we need to differentiate between the source being $w$ or not. If $s_E(e)\neq w$, then our formulas become
	\begin{gather}
		\phi^0(s_{E_L}(e^i))\phi^1(e^i) = (s_{E_L}(e^i),s_{E_L}(e^i))(e^if^k,f^j) = (e^if^k,f^j) = \phi^1(e^i),\\
		\phi^1(e^i)\phi^0(w^j) = (e^if^k,f^j)(f^j,f^j) =  (e^if^k,f^j) = \phi^1(e^i),
	\end{gather}
	here observe that $e\in \mathcal{L}_j$ implies $t_{E_L}(e^i)=w^j$ and $f^j$ is the unique edge starting at $w^j$. In the case $s_E(e)=w$, we again obtain $e=f$ and note that in this scenario $f$ is a loop supported at $w$. Nevertheless, our formulas become
	\begin{gather}
		\phi^0(w^i)\phi^1(f^i) = (f^i,f^i)(f^if^k,f^j) = (f^if^k,f^j) = \phi^1(f^i),\\
		\phi^1(f^i)\phi^0(w^j) = (f^if^k,f^j)(f^j,f^j) = (f^if^k,f^j) = \phi^1(f^i).
	\end{gather}
	As always, Axiom (3) is obtained from the above considerations via taking the *-operator on both sides.
	
	\underline{Axiom (4):} Given $x^i,y^{i'}\in E_L^1$ we need to check that the following formula holds
	\begin{equation}
		(\phi^1(x^i))^*\phi^1(y^{i'}) = \delta_{x^i,y^{i'}}\phi^0(t_{E_L}(x^i)).
	\end{equation}
	Here we need to distinguish three cases. First, assume that both $t_E(x),t_E(y)\neq w$, then everything is trivial as we have
	\begin{equation}
		(\phi^1(x^i))^*\phi^1(y^{i'}) = (t_{E_R}(x^i),x^i)(y^{i'},t_{E_R}(y^{i'})) = \delta_{x^i,y^{i'}}(t_{E_R}(x^i),t_{E_R}(x^i)) = \delta_{x^i,y^{i'}}\phi^0(t_{E_R}(x^i)).
	\end{equation}
	Next, suppose that $t_E(x)=w\neq t_E(y)$. Then our formula becomes
	\begin{equation}
		(\phi^1(x^i))^*\phi^1(y^{i'}) =(f^j,x^if^k)(y^{i'},t_{E_R}(y^{i'})),
	\end{equation}
	where $x\in \mathcal{L}_j\cap \mathcal{R}_k$. Paths $x^if^k$ and $y^{i'}$ can only be comparable if $x^i=y^{i'}$, but this clearly is not the case, since $t_{E_L}(x^i)\in\{w^1,\dots,w^n\}$, whereas $t_{E}(y)\neq w$ implies that $y^{i'}$ does not have any of the $w^l$'s as its target. Thus the product vanishes, as expected. Lastly, consider the case $t_E(x)=t_E(y)=w$ and lets further assume that
	\begin{equation}
		x\in \mathcal{L}_j\cap \mathcal{R}_k, y\in \mathcal{L}_{j'}\cap \mathcal{R}_{k'}.
	\end{equation}
	Then we get
	\begin{equation}
		(\phi^1(x^i))^*\phi^1(y^{i'}) = (f^j,x^if^k)(y^{i'}f^{k'},f^{j'}).
	\end{equation}
	Observe that paths $x^if^k$ and $y^{i'}f^{k'}$ are comparable if and only if they are equal, which in particular means that $x=y$. 
	Thus we also must have $j=j'$ and $k=k'$ as there is exactly one $\mathcal{L}_j$ containing $x$ and same goes for $\mathcal{R}$'s. All in all, we get 
	that everything is equal and therefore the above formula becomes
	\begin{equation}
		(\phi^1(x^i))^*\phi^1(y^{i'}) = \delta_{x^{i},y^{i'}}(f^j,f^j)=\delta_{x^{i},y^{i'}}\phi^0(w^j).
	\end{equation}
	This is exactly what we want as $x\in\mathcal{L}_j$ implies $t_{E_L}(x^i)=w^j$.
	
	Thus, our maps $\phi^0,\phi^1$ lift to an extended graph homomorphism $\phi\colon S(E_L)\to S(E_R)$. 
	
	Next up is regularity; given any $v^i\in\mathrm{reg}(E_L)$ and any idempotent $t\le \phi(v^i,v^i)$ we need to find an edge 
	$e^i\in E_L^1$ such that $t\phi(e^i,e^i)\neq0$. Let $\alpha$ be a path in $E_R$ such that $t=(\alpha,\alpha)$ and let $e^i$ be the first edge in 
	$\alpha$. We will show that $e^i$ works, but the argument depends on the sources and targets of the underlying edge $e$. Let us start with $v^i$ 
	not being any of the $w^j$'s and $t_{E}(e)\neq w$, then 
	\begin{equation}
		\phi(e^i,e^i) = (e^i,e^i)
	\end{equation}
	and this path is clearly comparable to $\alpha$. If $t_E(e) = w$ then
	\begin{equation}
		\phi(e^i,e^i) = (e^if^k,e^if^k),
	\end{equation}
	where index $k$ stems from the fact that $e\in\mathcal{R}_k$. As $f^k$ is the unique edge with source $w^k$, then any path $\alpha$ that starts 
	with $e^i$ is either $e^i$ itself or goes through $f^k$, i.e. $\alpha$ is certainly comparable to $e^if^k$. Now suppose that $v^i=w^i$, i.e. $e^i=f^i$ and 
	$\alpha$ necessarily starts with~$f^i$. Again let us start with the case $t_E(f)\neq w$. Then we get
	\begin{equation}
		\phi(f^i,f^i) = (f^i,f^i)
	\end{equation}
	and $f^i$ is obviously comparable to the path $\alpha$. Lastly, if $f$ is a loop, then we obtain
	\begin{equation}
		\phi(f^i,f^i) = (f^if^k,f^if^k),
	\end{equation}
	where $f\in\mathcal{R}_k$. Observe that $f^k$ is therefore a loop and it is the unique edge emitted from $w^k$. Therefore if $\alpha$ is a path that 
	started with $f^i$ and $f^i$ ends in $w^k$, then $\alpha$ is either $f^i$ or $f^i$ followed by some number of $f^k$. In both cases $\alpha$ is 
	comparable 
	to $f^if^k$ and thus we are done with regularity.
	
	Note that the induced $*$-homomorphism $\phi_*$ turns out to be exactly the map $\phi_*$ defined in Lemma \ref{lemma:in-splitplus}. By symmetry 
	we obtain the map $\varphi\colon S(E_R)\to S(E_L)$ and since the induced $*$-homomorphism $\varphi_*$ is also the same as in 
	Lemma~\ref{lemma:in-splitplus}, then we also immediately get that $\phi_*$ and $\varphi_*$ are mutual inverses.
\end{proof}

\begin{corollary}\label{cor:in-splitplus}
	Let $E$ be a graph and $w\in\mathrm{reg}(E)$ be such that $w$ does not emit an edge that is a loop. Then, for any balanced in-splits $E_L$ and 
	$E_R$, the $*$-homomorphism  $\phi_*\colon C^*(E_L)\to C^*(E_R)$ given in \eqref{phi} can be decomposed as
	\begin{equation}
	C^*(E_L)\stackrel{g^*}{\longrightarrow} C^*(E_{\widehat{L}}) \stackrel{ \xi_*}{\longrightarrow}C^*(E_R),
	\end{equation}
	where $g\colon E_{\widehat{L}}\to E_L$ is a multi-out-split homomorphism and $\xi$ is a regular extended path homomorphism.
\end{corollary}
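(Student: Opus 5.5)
The plan mirrors the corollaries for the shift and the unital reduction: first out-split maximally at the in-split vertices so that each new vertex emits exactly one edge, then apply Theorem~\ref{thm:in-splitplus}, and finally collapse.

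\emph{Step 1 (the graph $E_{\widehat{L}}$).} Let $w^1,\dots,w^n$ be the vertices of $E_L$ over $w$. Since $w$ is regular, each $w^i$ emits the finitely many edges $f^i$, $f\in s_E^{-1}(w)$. Let $E_{\widehat{L}}$ be the maximal out-split of $E_L$ at $\{w^1,\dots,w^n\}$, and let $g\colon E_{\widehat{L}}\to E_L$ be the corresponding multi-out-split homomorphism. By Definition~\ref{MOHom} and the subsequent proposition, $g$ is admissible, and by Lemma~\ref{lem:outsplit} (applied vertex by vertex) $g^*$ is a $*$-isomorphism.

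\emph{Step 2 (identify $E_{\widehat{L}}$).} The key observation is that $E_{\widehat{L}}$ is a balanced in-split companion of an in-split of the maximal out-split $E_{\widehat{O}}$ of $E$ at $w$. Write $s_E^{-1}(w)=\{f_1,\dots,f_m\}$, so $E_{\widehat{O}}$ has vertices $w_1,\dots,w_m$, each emitting exactly one edge. Because $w$ emits no loop, no $f_l$ ends at $w$, so the out-split and in-split at $w$ commute. Concretely, $E_{\widehat{L}}$ is obtained from $E_{\widehat{O}}$ by in-splitting each $w_l$ according to the partition induced by $\mathcal{L}$. The same construction applied to $\mathcal{R}$ gives $E_{\widehat{R}}$, a maximal out-split of $E_R$ with collapse $h\colon E_{\widehat{R}}\to E_R$. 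Then $E_{\widehat{L}}$ and $E_{\widehat{R}}$ are related by a finite sequence of balanced in-splits, one at each vertex $w_l$, each of which emits exactly one edge.

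\emph{Step 3 (apply the theorem and compose).} Theorem~\ref{thm:in-splitplus}, applied at each $w_l$, gives regular extended path homomorphisms whose composite $\psi\colon S(E_{\widehat{L}})\to S(E_{\widehat{R}})$ induces the isomorphism $\psi_*$ of Lemma~\ref{lemma:in-splitplus}. Theorem~\ref{thm:maxoutsplit}, applied at each of the regular vertices $w^k$ of $E_R$, gives a regular $\chi\colon S(E_{\widehat{R}})\to S(E_R)$ with $\chi_*=(h^*)^{-1}$. Set $\xi:=\chi\circ\psi$. It is regular, since $\mathsf{REG}$ is a category.

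\emph{Step 4 (commutativity).} It remains to show $\phi_*=\xi_*\circ g^*$, which is equivalent to $h^*\circ\phi_*=\psi_*\circ g^*$. This is the main obstacle. I would check it on the generators $S_{v^i}$ and $S_{e^i}$ of $C^*(E_L)$ using the explicit formulas \eqref{contraout}, \eqref{phi} and \eqref{varphi}. The only nontrivial case is an edge $e^i$ with $e\in\mathcal{L}_j\cap\mathcal{R}_k$. There:
\begin{itemize}
\item $g^*(S_{e^i})$ is the sum of the split copies of $e^i$, one for each edge out of $w^j$;
\item $\psi_*$ multiplies each copy by the factor $S_{f_l^{k}}S_{f_l^{j}}^*$;
\item this sum must be matched with $h^*$ applied to $S_{e^i}\sum_f S_{f^k}S_{f^j}^*$.
\end{itemize}
The sums agree after reindexing over $f\in s_E^{-1}(w)$. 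If the bookkeeping of the iterated maximal out-splits becomes unwieldy, a fallback is to check the identity only on vertex projections and edge partial isometries, then invoke uniqueness of $*$-homomorphisms out of $C^*(E_L)$ determined on generators.
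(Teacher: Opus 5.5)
Your proposal is correct and is essentially the paper's own proof. Like the paper, you pass to the maximal out-splits $E_{\widehat{L}}$ and $E_{\widehat{R}}$, realize them as in-splits of $E_{\widehat{O}}$ at vertices emitting a single edge, and take $\xi$ to be the composite $\psi_R\circ\widehat{\phi}$ (your $\chi\circ\psi$) of the maps from Theorem~\ref{thm:in-splitplus} and Theorem~\ref{thm:maxoutsplit}, reducing $\phi_*=\xi_*\circ g^*$ to commutativity of the square; the paper only asserts that commutativity, so your generator check (with the vertex generators handled by the Cuntz--Krieger relation at the single-emitting vertices) gives more detail than the original.
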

\begin{proof}
	Consider the maximal out-split $E_{\widehat{O}}$ at $w$ and denote by $w^1,\dots,w^m$ the vertices in $E_{\widehat{O}}$ that replace $w$. One can lift any partition $\mathcal{E}_1,\dots,\mathcal{E}_n$ of edges incoming to $w$ to partitions $\mathcal{E}_1^j,\dots,\mathcal{E}_n^j$ of edges incoming to $w^j$ by just forgetting about superscripts:
	\begin{equation}\label{cor:in-splitpluspartitionlift}
		\mathcal{E}_i^j := \{ e^j\in E_{\widehat{O}}^1 : e\in\mathcal{E}_i \}.
	\end{equation}
	Note that as $w$ does not emit any loop, then the same holds true for all $w^i$'s and thus the order of in-splits at $w^i$'s do not matter. Let $E_{\widehat{L}}$ be the result of in-splitting of $E_{\widehat{O}}$ at all $w^j$'s using these lifted partitions $\mathcal{E}_i^j$. One can observe that this graph is in fact exactly the same as the graph obtained by starting with $E_L$ and performing a multi-out-split that corresponds to maximally out-splitting at every vertex in $E_L$ that replaced $w$. Using both Theorem~\ref{thm:in-splitplus} and Theorem~\ref{thm:maxoutsplit} we thus have the following maps
	\[\begin{tikzcd}[ampersand replacement=\&]
		{S(E_{\widehat{L}})} \&\& {S(E_{\widehat{R}})} \\
		\\
		{S(E_L)} \&\& {S(E_R)}
		\arrow["\widehat{\phi}", shift left, from=1-1, to=1-3]
		\arrow["{\psi_L}", from=1-1, to=3-1]
		\arrow["\widehat{\varphi}", shift left, from=1-3, to=1-1]
		\arrow["{\psi_R}", from=1-3, to=3-3]
	\end{tikzcd}\]
	The vertical maps come from Theorem~\ref{thm:maxoutsplit}, since both $E_{\widehat{L}}$ and $E_{\widehat{R}}$ can be constructed via a sequence of maximal out-splits at vertices that replaced $w$ after applying respective in-splits, and the horizontal maps come from Theorem~\ref{thm:in-splitplus} as both $E_{\widehat{L}}$ and $E_{\widehat{R}}$ come from $E_{\widehat{O}}$ via a sequence of in-splits -- we in-split at each $w^j$ using the lifted partitions $\mathcal{L}_1^j,\dots,\mathcal{L}_n^j$ and $\mathcal{R}_1^j,\dots,\mathcal{R}_n^j$ respectively as in $\eqref{cor:in-splitpluspartitionlift}$, where $\mathcal{L}_1,\dots,\mathcal{L}_n$ is the partition that defines the in-split graph $E_L$ and $\mathcal{R}_1,\dots,\mathcal{R}_n$ is the partition defining $E_R$. One can check that this diagram is commutative and therefore by passing to C*-algebras we obtain
	\[\begin{tikzcd}[ampersand replacement=\&]
		{C^*(E_{\widehat{L}})} \&\& {C^*(E_{\widehat{R}})} \\
		\\
		{C^*(E_L)} \&\& {C^*(E_R)}
		\arrow["{\widehat{\phi}_*}", shift left, from=1-1, to=1-3]
		\arrow["{(\psi_L)_*}", from=1-1, to=3-1]
		\arrow["{\widehat{\varphi}_*}", shift left, from=1-3, to=1-1]
		\arrow["{(\psi_R)_*}", from=1-3, to=3-3]
	\end{tikzcd}\]
	Recall that $((\psi_L)_*)^{-1} = \Psi_L^*$ and $((\psi_R)_*)^{-1}=\Psi_R^*$ for admissible graph homomorphisms 
	$\Psi_L\colon E_{\widehat{L}}\to E_L,\Psi_R\colon E_{\widehat{R}}\to E_R$, which are compositions of collapses of maximal out-split. The maps $\phi_*,\varphi_*$ as defined in 
	Lemma~\ref{lemma:in-splitplus} fit into this square making it commutative:
	\[
	\begin{tikzcd}[ampersand replacement=\&]
		{C^*(E_{\widehat{L}})} \&\& {C^*(E_{\widehat{R}})} \\
		\\
		{C^*(E_L)} \&\& {C^*(E_R)}
		\arrow["{\widehat{\phi}_*}", shift left, from=1-1, to=1-3]
		\arrow["{(\psi_L)_*}", from=1-1, to=3-1]
		\arrow["{\widehat{\varphi}_*}", shift left, from=1-3, to=1-1]
		\arrow["{(\psi_R)_*}", from=1-3, to=3-3]
		\arrow["{\phi_*}", shift left, from=3-1, to=3-3]
		\arrow["{\varphi_*}", shift left, from=3-3, to=3-1]
	\end{tikzcd}
	\]
	Thus we get our desired decompositions:
	\begin{equation}
		\phi_* = (\psi_R\circ\widehat{\phi})_*\circ \Psi_L^* ,\quad \varphi_* = (\psi_L\circ\widehat{\varphi})_*\circ\Psi_R^*.
	\end{equation}
\end{proof}

\section{Examples and applications}

\subsection{Matrix algebras over Cuntz algebras} 
We will present two families of graphs whose graph C*-algebras are 
matrix algebras over Cuntz algebras.
Recall first that the matrix algebra  $M_k (\mathbb{C})$ is the  graph  C*-algebra
of the $k$-line graph  
\begin{align}
 \begin{tikzcd}[ampersand replacement=\&]
	{v_1} \& {v_2} \& \cdots \& {v_{k-1}} \& {v_k}
	\arrow[ "{l_1}"', from=1-1, to=1-2]
	\arrow["{l_{k-1}}"', from=1-4, to=1-5]
	\arrow[from=1-2, to=1-3]
	\arrow[from=1-3, to=1-4]
\end{tikzcd}
.
\label{eq:k-line}
\end{align}
On the other hand, the Cuntz algebra $\mathcal{O}_m$ is the graph C*-algebra of~$E_m$:
\[ \label{hawaian}
\begin{tikzpicture}[scale=0.4,auto,swap]
\centering
\tikzstyle{vertex}=[circle,fill=black,minimum size=3pt,inner sep=0pt]
\tikzstyle{edge}=[draw,->]
    \node[vertex,label=below:$v$] (1) at (0,0) {};
    \node (2) at (0,-1) {};
    \node (3) at (0,-2) {};
    \path (1) edge [edge, anchor=center, loop above, min distance=20mm, in=130, out=50, looseness=40] node[below] {$e_m$} (1);
    \node [label=above:$\vdots$] at (0,1.4) {};
    \path (1) edge [edge, anchor=center, loop below, in=130, out=50, looseness=65] node[above] {$e_{2}$} (1);
    \path (1) edge [edge, anchor=center, loop below, in=140, out=40, looseness=90] node[above] {$e_1$} (1);
\end{tikzpicture}.
\]
Now we can view $M_k ( \mathcal O_m )$ as the universal C*-algebra generated by
\begin{align}
\set{ S_{ l_j } \otimes E_i \;|\; 1 \le j \le k-1 ,\, 1\le i \le m }
\end{align}
subject to appropriate relations. 
On the other hand,  by blowing up the vertex of 
$E_m$ in \eqref{hawaian} to the $k$-line graph 
\eqref{eq:k-line}, we obtain:
\begin{equation}
\label{eq:Gmk}
\begin{tikzpicture}[auto,swap,  scale=1.75]
\tikzstyle{vertex}=[circle,fill=black,minimum size=3pt,inner sep=0pt]
\tikzstyle{edge}=[draw,-{Stealth}]
\tikzstyle{cycle1}=[draw,-{Stealth},out=130, in=50, loop, distance=40pt]
\tikzstyle{cycle2}=[draw,-{Stealth},out=100, in=30, loop, distance=40pt]
   
\node[vertex, label=below:$v_1$] (0) at (0,0) {};
\node[vertex, label=below:$v_2$] (1) at (1,0) {};
\node[vertex] (2) at (2,0) {};
\node[vertex] (3) at (3,0) {};
\node[vertex, label=below:$v_k$] (4) at (4,0) {};
\node  at (-1,0) {$ G_{m,k}:=$};

\path (0) edge[edge] node[above, scale=0.7]{} (1) ;
\path (1) edge[edge] node[above, scale=0.7]{} (2);
\path (2) edge[dashed] (3);
\path (3) edge[edge]  node[above, scale=0.7]{} (4);
\path (4) edge[edge, bend left=40 ,red]
node[above, scale=0.7] {$m$}(0);
\end{tikzpicture}
\end{equation}

Next,
for integers $m,n$ bigger than $1$ and such that $m - 1 = (n-1)k$ for some  $k \ge 1$,
we consider the following graph, where each red arrow represents $(n-1)$-edges:  
\begin{equation}
\label{eq:Fmn}
\hspace*{4mm}\begin{tikzpicture}[auto,swap, scale=1.75]
\tikzstyle{vertex}=[circle,fill=black,minimum size=3pt,inner sep=0pt]
\tikzstyle{edge}=[draw,-{Stealth}]
\tikzstyle{cycle1}=[draw,-{Stealth},out=130, in=50, loop, distance=40pt]
\tikzstyle{cycle2}=[draw,-{Stealth},out=100, in=30, loop, distance=40pt]
   
\node[vertex, label=below:$v_1$] (0) at (0,0) {};
\node[vertex, label=below:$v_2$] (1) at (1,0) {};
\node[vertex] (2) at (2,0) {};
\node[vertex] (3) at (3,0) {};
\node[vertex, label=below:$v_k$] (4) at (4,0) {};
\node  at (-1,0) {$ F_{m,n}:=$};

\path (0) edge[edge] (1);
\path (1) edge[edge] (2);
\path (2) edge[dashed] (3);
\path (3) edge[edge] (4);
\path (4) edge[edge, bend left=50, red]   node[below, scale=0.5] {$n-1$}(3);
\path (4) edge[edge, bend left=50, red]  node[below, scale=0.5] {$n-1$} (2);
\path (4) edge[edge, bend left=50, red]   node[below, scale=0.5] {$n-1$} (1);
\path (4) edge[edge, bend left=50, red] node[below, scale=0.5] {$n-1$}  (0);
\path (4) edge[edge, bend right=40] node[above, scale=0.7] {$e_m$}(0);
\path (4) edge[cycle1, red] node[above, scale=0.5] {$n-1$} (4);
\end{tikzpicture}.
\end{equation}
In more detail, it is obtained by adding $m$ edges to the  $k$-line graph 
\eqref{eq:k-line}, with  $v_k$ as the source, in the following way:
the edge $e_m$ goes to $v_1$, and the remaining $m-1 = k (n -1 )$ edges
are divided evenly  into $k$ batches of $(n -1 )$ with each batch ending in a different vertex. 
Note that 
the graph $F_{m,n}$ goes to $G_{m,k}$ by iterated shifts, which shows that they have the same graph C*-algebras.

Furthermore, applying the unital reduction to $G_{m,k}$ at $v\in G_{m,k}^0\setminus\{v_1,\,v_k\}$ , we obtain
\[
\begin{tikzpicture}[auto,swap,  scale=1.75]
\tikzstyle{vertex}=[circle,fill=black,minimum size=3pt,inner sep=0pt]
\tikzstyle{edge}=[draw,-{Stealth}]
\tikzstyle{cycle1}=[draw,-{Stealth},out=130, in=50, loop, distance=40pt]
\tikzstyle{cycle2}=[draw,-{Stealth},out=100, in=30, loop, distance=40pt]
   
\node[vertex, label=below:$v_1$] (0) at (0,0) {};
\node[vertex, label=below:$v_2$] (1) at (1,0) {};
\node[vertex] (2) at (2,0) {};
\node[vertex] (3) at (3,0) {};
\node[vertex, label=below:$v_{k-1}$] (4) at (4,0) {};
\node  at (-1,0) {$ G_{m,k-1}:=$};
\node[vertex,  label=below:$v_k$] (5) at (3,.5) {};

\path (0) edge[edge] node[above, scale=0.7]{} (1) ;
\path (1) edge[edge] node[above, scale=0.7]{} (2);
\path (2) edge[dashed] (3);
\path (3) edge[edge]  node[above, scale=0.7]{} (4);
\path (4) edge[edge, bend left=40 ,red]
node[above, scale=0.7] {$m$}(0);
\path (5) edge[edge] node[above, scale=0.7]{} (4);
\end{tikzpicture}
\]
Observe now that $G_{m,k}$ goes to  $H_{m,k}$ by iterated unital reduction and to $E_m$ by 
iterated reduction. Here, $H_{m,k}$ is the graph:
\[
\begin{tikzpicture}[auto,swap, scale=1.75]
\tikzstyle{vertex}=[circle,fill=black,minimum size=3pt,inner sep=0pt]
\tikzstyle{edge}=[draw,-{Stealth}]
\tikzstyle{cycle1}=[draw,-{Stealth},out=130, in=50, loop, distance=40pt]
\tikzstyle{cycle2}=[draw,-{Stealth},out=100, in=30, loop, distance=40pt]

\node[vertex, label=below:$v_1$] (5) at (0,0) {};
\node[vertex, label=below:$v_2$] (1) at (-1,0) {};
\node[vertex, label=below:$v_3$] (2) at (-.7,-.7) {};
\node[vertex, label=below:$v_{k-2}$] (3) at (.7,-.7) {};
\node[vertex, label=below:$v_{k-1}$] (4) at (1,0) {};
\node[ label=below:$\cdots$] (6) at (0,-.3) {};

\path (5) edge[cycle1, red] node[above, scale=0.5] {$m$} (5);

\path (1) edge[edge] (5);
\path (2) edge[edge] (5);
\path (3) edge[edge] (5);
\path (4) edge[edge] (5);

\end{tikzpicture}.
\]

\subsection{Minimal unitization of infinite matrices}
Consider first the ``finite case'', i.e. the following graphs 
\begin{center}
\begin{tikzpicture}
 \draw (-5,0) node(Football){\usebox{\Football}};
 \draw (-6,1.3) node{$E_n$};
 \draw (0,3.5) node(Star){\usebox{\Star}};
 \draw (0,1.3) node{$F_n$};
 \draw (5,0) node(Line){\usebox{\Line}};
  \draw (6,1.3) node{$G_n$};
 \draw [->, dashed, shorten >=-15pt, shorten <=-7pt ] (Star)-- node[above left] {$f_n$} (Football);
 \draw [->, dashed, shorten >=7pt ] (Star)-- node[above right] {$g^n$} (Line);

\end{tikzpicture}
\end{center}
Here, $f_n$ is the collapse of the maximal out-split of $E_n$ at the vertex $2$ and $g_n$ is a composition of shifts --- 
we start by shifting the edge $y_n$ so that it 
ends at vertex $n-1$ then shift the edge $y_{n-1}$ so that it ends at $n-2$ and proceed likewise for all edges $y_i$ in~$F_n$. 
Let us write out maps $f_n^*\colon C^*(F_n)\to C^*(E_n)$ and $g_*^n\colon C^*(F_n)\to C^*(G_n)$ explicitly:
\begin{gather}
	f_n^*(S_1) = S_1,\quad f_n^*(S_2) = \sum_{k=1}^nS_k = 1 - S_1,\quad f_n^*(S_{x_i}) = S_{y_i},\\
	g_*^n(S_i) = S_i,\quad g_*^n (S_{y_i}) = S_{z_i}\cdots S_{z_2}.
\end{gather}
Notice that formulas of $g_*^n$ are independent of $n$, whereas formulas for $f_n^*$ are either dependent on $n$, or are dependent on existence of a 
unit. Ultimately we are interested in studying of the 
map $\varphi^n\colon S(G_n)\to S(E_n)$ which on the level of C*-algebras would be given by
\begin{equation}
	\varphi^n_* \colon C^*(G_n) \longrightarrow C^*(E_n), \quad \varphi^n_*=( g^n_* \circ   f_n^* )^{-1}  = (f_n^*)^{-1}\circ (g_*^n)^{-1} \
\end{equation}
Since we know the formulas for inverses of collapses of maximal out-splits and shift moves, then one can easily check that $\varphi^n$ 
is given by the following formula
\begin{gather}
	\varphi^n(i,i) = \begin{cases}
		(1,1) & i=1,\\
		(x_i,x_i) & i>1,\\
	\end{cases}\\
	\varphi^n(z_i,1) = \begin{cases}
		(x_2,1) & i=2,\\
		(x_i,x_{i-1}) & i>2.
	\end{cases}
\end{gather}
In our finite realm and on the level of C*-algebras this map obviously admits an inverse $(\varphi_*^n)^{-1} = g_*^n\circ f_n^*$ which is given by the following formula
\begin{equation}
	(\varphi_*^n)^{-1}(S_1) =  S_1, (\varphi_*^n)^{-1}(S_2) = \sum_{k=1}^nS_k = 1 - S_1, (\varphi_*^n)^{-1}(S_{x_i}) = S_{z_i}\cdots S_{z_2}.
\end{equation}
Of course, here we again arrive at a similar issue as with $f_n^*$; namely $\varphi_*^n$ is defined completely independently of $n$, whereas $(\varphi_*^n)^{-1}$ requires either depending on $n$ or having a unit in the C*-algebra of $G_n$. We can therefore with no problem define a map $\varphi_*^\infty\colon C^*(G_\infty)\to C^*(E_\infty)$ which is given by the same formula as $\varphi_*^n$. The inverse map $(\varphi_*^n)^{-1}$ cannot be extended to $C^*(E_\infty)$ as $C^*(G_\infty)$ is not unital. After passing to unitizations, we can write a map $\psi\colon C^*(E_\infty)\cong C^*(E_\infty)^+\to C^*(G_\infty)^+$ using formulas for $(\varphi_*^n)^{-1}$:
\begin{equation}
	\psi(S_1) = (0,S_1), \psi(S_2) = (1,-S_1), \psi(S_{x_i}) = (0,S_{z_i}\cdots S_{z_2})
\end{equation}
and this map is exactly the inverse of the induced map $(\varphi_*^\infty)^+\colon C^*(G_\infty)^+\to C^*(E_\infty)$. In particular we have a commutative diagram
\[\begin{tikzcd}[ampersand replacement=\&]
	{C^*(G_\infty)} \&\& {C^*(E_\infty)} \\
	{C^*(G_\infty)^+}
	\arrow["{\varphi_*^\infty}", from=1-1, to=1-3]
	\arrow[hook, from=1-1, to=2-1]
	\arrow["{(\varphi_*^\infty)^+}"', from=2-1, to=1-3]
\end{tikzcd}\]
which implies that both $\varphi_*^\infty$ is an injection and certainly not an isomorphism. Hence $\varphi_*^\infty$ is exactly the minimal unitization of 
the infinite matrix group.

To end with, let us consider the following related conjecture:
\begin{conjecture}
If all but finitely many vertices belong to finitely many infinite paths, then respective
folding of these paths to countably-infinitely
many edges between two different veritices yields a unitization.
\end{conjecture}

\subsection{Balanced in-splitting of Hong--Szyma\'nski graphs}
In \cite{hs02}, Hong and Szyma\'nski found graphs yielding the C*-algebras of
Vaksman--Soibelman odd quantum spheres \cite{vs91} as graph C*-algebras. 
Consider the general Hong--Szyma\'nski graph with $n$ vertices:

\begin{figure}[h]
	\begin{tikzpicture}
		
		\node[main node] (1) {};
		\node (2) [right of=1] {};
		\node (3) [right of=2] {};
		\node (4) [right of=3] {$\dots$};
		\node (5) [right of=4] {};
		
		\filldraw (1) circle (0.05) node[below=2pt] {$1$};
		\filldraw (2) circle (0.05) node[below=2pt] {$2$};
		\filldraw (3) circle (0.05) node[below=2pt] {$3$};
		\filldraw (5) circle (0.05) node[below=2pt] {$n$};
		
		\path[freccia] (1) edge[ciclo] (1);
		\path[freccia] (2) edge[ciclo] (2);
		\path[freccia] (3) edge[ciclo] (3);
		\path[freccia] (5) edge[ciclo] (5);
		
		\path[freccia] (1) edge (2)
		(2) edge (3)
		(3) edge (4)
		(4) edge (5)
		(1) edge[bend right=30] (3)
		(1) edge[bend right=60] (5)
		(2) edge[bend right =55] (5)
		(3) edge[bend right=50] (5);
		
	\end{tikzpicture}
	\caption{The above graph shall be denoted by $\Podles{n}$}
\end{figure}
We shall denote edges incoming into the $n$-th vertex by 
\begin{equation}
	t_{\Podles{n}}^{-1}(n) = \{e_1,\dots,e_n\},
\end{equation}
where $s_{\Podles{n}}(e_i)=i$. Later on we will utilize the following short hand for our graph: Observe that looking at the full subgraph of $\Podles{n}$ supported at the first $n-1$ vertices we get exactly the graph of $\Podles{n-1}$. Therefore we will denote $\Podles{n}$ in the following way
\begin{center}
	\begin{tikzpicture}
		
		\node [main node] (1) {};
		\node (2) [right of=1] {};
		\node (3) [right of=2] {};
		
		\filldraw (1) node[left =0.25pt] {$\Podles{n-1}$};
		\filldraw (3) circle (0.05) node[below=2pt] {$n$};
		
		\path[freccia] (3) edge[ciclo, "$e_n$"] (3);
		
		\foreach \k in {25} {
			\path[freccia] (1) edge[bend left=\k,"$e_1$"] (3);
			\path[freccia] (1) edge[bend right=\k,"$e_{n-1}$"'] (3); }
		
		\draw[densely dotted] ($(2)+(0,0.25)$) edge ($(2)+(0,-0.25)$);
		
	\end{tikzpicture}
\end{center}
Consider the very special case of $n=4$:
\begin{center}
	\begin{tikzpicture}
		
		\node [main node] (1) {};
		\node (2) [right of=1] {};
		\node (3) [right of=2] {};
		
		\filldraw (1) node[left =0.25pt] {$\Podles{3}$};
		\filldraw (3) circle (0.05) node[below=2pt] {$4$};
		
		\path[freccia] (3) edge[ciclo, "$e_4$"] (3);
		
		\foreach \k in {40} {
			\path[freccia] (1) edge[bend left=\k,"$e_1$"] (3);
			\path[freccia] (1) edge["$e_2$"] (3);
			\path[freccia] (1) edge[bend right=\k,"$e_{3}$"] (3); }
		
	\end{tikzpicture}
\end{center}
And partitions $\mathcal{L} = \{\{e_3,e_4\},\{e_1,e_2\}\}, \mathcal{R}=\{\{e_4\},\{e_1,e_2,e_3\}\}$. After applying in-splits, we obtain the following graphs
\begin{center}
	\begin{tikzpicture}
		\node [main node] (1) {};
		\node (2) [right of=1] {};
		\node (3) [right of=2,above of=2] {};
		\node (4) [right of=2,below of=2] {};
		
		\filldraw (1) node[left =0.25pt] {$\Podles{3}$};
		\filldraw (3) circle (0.05) node[right=2pt] {$4^1$};
		\filldraw (4) circle (0.05) node[below=2pt] {$4^2$};
		
		\path[freccia] (3) edge[ciclo, "$e_4^1$"] (3);
		
		\foreach \k in {10} {
			\path[freccia] (1) edge[bend right=\k,"$e_1^1$"'] (4);
			\path[freccia] (1) edge[bend left=\k,"$e_2^1$"] (4);
			\path[freccia] (1) edge["$e_{3}^1$"] (3); 
			\path[freccia] (4) edge["$e_4^2$"'] (3);}
		
	\end{tikzpicture}
	\begin{tikzpicture}
		\node [main node] (1) {};
		\node (2) [right of=1] {};
		\node (3) [right of=2,above of=2] {};
		\node (4) [right of=2,below of=2] {};
		
		\filldraw (1) node[left =0.25pt] {$\Podles{3}$};
		\filldraw (3) circle (0.05) node[right=2pt] {$4^1$};
		\filldraw (4) circle (0.05) node[below=2pt] {$4^2$};
		
		\path[freccia] (3) edge[ciclo, "$e_4^1$"] (3);
		
		\foreach \k in {50} {
			\path[freccia] (1) edge[bend right=\k,"$e_1^1$"'] (4);
			\path[freccia] (1) edge[bend left=30,"$e_2^1$"] (4);
			\path[freccia] (1) edge["$e_{3}^1$"'] (4); 
			\path[freccia] (4) edge["$e_4^2$"'] (3);}
		
	\end{tikzpicture}
\end{center}
The left graph will be denoted by $\Podles{4}^L$ and the graph on the right will be denoted by $\Podles{4}^R$. By Theorem~\ref{thm:in-splitplus} we 
have maps $\phi\colon S(\Podles{4}^L)\to S(\Podles{4}^R)$ and $\varphi\colon S(\Podles{4}^R)\to S(\Podles{4}^L)$ that induce mutually inverse *-
homomorphisms on the level of C*-algebras. In this case these maps turn out to be
\begin{gather}
	\phi|_{S(\Podles{3})} = \mathrm{id},\\
	\phi(4^1,4^1) = (e_4^1,e_4^1),\quad \phi(4^2,4^2) = (e_4^2,e_4^2),\\
	\phi(e_1^1,4^2) = (e_1^1e_4^2,e_4^2), \quad \phi(e_2^1,4^2) = (e_2^1e_4^2,e_4^2),\\
	\phi(e_3^1,4^1) = (e_3^1e_4^2,e_4^1),  \quad \phi(e_4^1,4^1) = (e_4^1e_4^1,e_4^1),\quad  \phi(e_4^2,4^1) = (e_4^2e_4^1,e_4^1).
\end{gather}
The other map $\varphi\colon S(\Podles{4}^R)\to S(\Podles{4}^L)$ is given by the same formulas with the only change happening on the edge $e_3^1$, 
where we have
\begin{equation}
	\varphi(e_3^1,4^2) = (e_3^1e_4^1,e_4^2).
\end{equation}
More generally, for any $n$ and any partition $\mathcal{E}=\{\mathcal{E}_1,\dots,\mathcal{E}_k\}$ of the set $t_{\Podles{n}}^{-1}(n)$ the in-split graph 
of $\Podles{n}$ consutrcted from the partition $\mathcal{E}$ looks as follows
\begin{center}
	\begin{tikzpicture}
		\node [main node] (1) {};
		\node (2) [right of=1] {};
		\node (3) [right of=2,above of=2] {};
		\node (4) [below of=3] {};
		\node (5) [below of=4] {};
		
		\filldraw (1) node[left =0.25pt] {$\Podles{n-1}$};
		\filldraw (3) circle (0.05) node[right=4pt] {$n^1$};
		\filldraw (4) circle (0.05) node[right=2pt] {$n^2$};
		\filldraw (5) circle (0.05) node[below=2pt] {$n^k$};

		\path[freccia] (3) edge[ciclo, "$e_n^1$"] (3);
		
		\foreach \k in {10} {
			\path[freccia] (1) edge[loosely dashed,"$\mathcal{E}_2$"] (4);
			\path[freccia] (1) edge[loosely dashed,"$\mathcal{E}_1\setminus\{e_n\}$"] (3); 
			\path[freccia] (4) edge["$e_n^2$"'] (3);
			\path[freccia] (5) edge[bend right=50,"$e_n^k$"'] (3);
			\path[freccia] (1) edge[loosely dashed,"$\mathcal{E}_k$"'] (5);
			\path[freccia] (5) edge[-,draw opacity=0] node[midway,sloped,inner sep=0pt] {$\cdots$} (4); }
		
	\end{tikzpicture}
\end{center}
Where a dashed arrow labeled by a partition $\mathcal{E}_i=\{e_{i_1},\dots,e_{i_m}\}$, which is possibly empty, denotes arrows $e_{i_1}^1,\dots,e_{i_m}^1$ which 
connect vertices $i_1,\dots,i_m$ with $n^i$. Also note that we assume, without loss of generality, that $\mathcal{E}_1$ contains the loop $e_n$. By 
Theorem~\ref{thm:in-splitplus} any distribution of arrows $e_1,\dots,e_{n-1}$ into the vertices $n^1,\dots,n^k$ leads to isomorphic graphs on the level of C*-
algebras.

\subsection{Balanced in-split at a multiple emitter} 
To explicitly witness the mechanics of how Corollary~\ref{cor:in-splitplus} works, consider the following (artificial) example:
\[E\colon\begin{tikzcd}[ampersand replacement=\&]
	{v_1} \&\& {u_1} \\
	{v_2} \& w \\
	{v_3} \&\& {u_2}
	\arrow["{e_1}", from=1-1, to=2-2]
	\arrow["{e_2}", from=2-1, to=2-2]
	\arrow["{f_1}", from=2-2, to=1-3]
	\arrow["{f_2}", from=2-2, to=3-3]
	\arrow["{e_3}", from=3-1, to=2-2]
\end{tikzcd}\]
Pick, say, partitions $\mathcal{L}=\{\{e_1,e_2\},\{e_3\}\}$ and $\mathcal{R}=\{\{e_1,e_2,e_3\},\{\emptyset\}\}$. Let us start with graphs $E_L$ and $E_R$:
\[E_L:\begin{tikzcd}[ampersand replacement=\&]
	{v_1^1} \&\& {w^1} \&\& {u_1^1} \\
	{v_2^1} \\
	{v_3^1} \&\& {w^2} \&\& {u_2^1}
	\arrow["{e_1^1}", from=1-1, to=1-3]
	\arrow["{f_1^1}", from=1-3, to=1-5]
	\arrow["{f_2^1}"{pos=0.8}, from=1-3, to=3-5]
	\arrow["{e_2^1}", from=2-1, to=1-3]
	\arrow["{e_3^1}", from=3-1, to=3-3]
	\arrow["{f_1^2}"'{pos=0.8}, from=3-3, to=1-5]
	\arrow["{f_2^2}", from=3-3, to=3-5]
\end{tikzcd}E_R:\begin{tikzcd}[ampersand replacement=\&]
{v_1^1} \&\& {w^1} \&\& {u_1^1} \\
{v_2^1} \\
{v_3^1} \&\& {w^2} \&\& {u_2^1}
\arrow["{e_1^1}", from=1-1, to=1-3]
\arrow["{f_1^1}", from=1-3, to=1-5]
\arrow["{f_2^1}"{pos=0.8}, from=1-3, to=3-5]
\arrow["{e_2^1}", from=2-1, to=1-3]
\arrow["{e_3^1}", from=3-1, to=1-3]
\arrow["{f_1^2}"'{pos=0.8}, from=3-3, to=1-5]
\arrow["{f_2^2}", from=3-3, to=3-5]
\end{tikzcd}\]
Applying the multi-out-split that is the maximal out-split at both $w^1$ and $w^2$ we get graphs $E_{\widehat{L}}$ and $E_{\widehat{R}}$ respectively, as in the proof of Corollary~\ref{cor:in-splitplus}:
	\begin{equation}\label{ex:in-splitpluslabelL}
		E_{\widehat{L}}:\begin{tikzcd}[ampersand replacement=\&]
			{v_1^{11}} \&\&\& {w^{11}} \\
			\&\&\& {w^{12}} \&\&\& {u_1^{11}} \\
			{v_2^{11}} \\
			\&\&\& {w^{21}} \&\&\& {u_2^{11}} \\
			{v_3^{11}} \&\&\& {w^{22}}
			\arrow["{{e_1^{11}}}"{pos=0.2}, from=1-1, to=1-4]
			\arrow["{e_{1}^{12}}"'{pos=0.2}, from=1-1, to=2-4]
			\arrow["{{f_1^{11}}}"{pos=0.8}, from=1-4, to=2-7]
			\arrow["{{f_2^{11}}}"{pos=0.8}, from=2-4, to=4-7]
			\arrow["{{e_2^{11}}}"{pos=0.2}, from=3-1, to=1-4]
			\arrow["{e_{2}^{12}}"'{pos=0.2}, from=3-1, to=2-4]
			\arrow["{{f_1^{21}}}"'{pos=0.8}, from=4-4, to=2-7]
			\arrow["{{e_3^{11}}}"{pos=0.2}, from=5-1, to=4-4]
			\arrow["{e_3^{12}}"'{pos=0.2}, from=5-1, to=5-4]
			\arrow["{{f_2^{21}}}"{pos=0.8}, from=5-4, to=4-7]
		\end{tikzcd}
	\end{equation}
	
	\begin{equation}\label{ex:in-splitpluslabelR}
		E_{\widehat{R}}:\begin{tikzcd}[ampersand replacement=\&]
			{v_1^{11}} \&\&\& {w^{11}} \\
			\&\&\& {w^{12}} \&\&\& {u_1^{11}} \\
			{v_2^{11}} \\
			\&\&\& {w^{21}} \&\&\& {u_2^{11}} \\
			{v_3^{11}} \&\&\& {w^{22}}
			\arrow["{e_1^{11}}"{pos=0.2}, from=1-1, to=1-4]
			\arrow["{e_1^{12}}"'{pos=0.2}, from=1-1, to=2-4]
			\arrow["{f_1^{11}}"{pos=0.8}, from=1-4, to=2-7]
			\arrow["{f_2^{11}}"{pos=0.8}, from=2-4, to=4-7]
			\arrow["{e_2^{11}}"{pos=0.2}, from=3-1, to=1-4]
			\arrow["{e_2^{12}}"'{pos=0.2}, from=3-1, to=2-4]
			\arrow["{f_1^{21}}"'{pos=0.8}, from=4-4, to=2-7]
			\arrow["{e_3^{11}}"{pos=0.2}, from=5-1, to=1-4]
			\arrow["{e_3^{12}}"'{pos=0.2}, from=5-1, to=2-4]
			\arrow["{f_2^{21}}"{pos=0.8}, from=5-4, to=4-7]
		\end{tikzcd}
	\end{equation}
On the other hand, the maximal out-split $E_{\widehat{O}}$ of $E$ at $w$ is
\[\begin{tikzcd}[ampersand replacement=\&]
	{v_1^1} \\
	\&\& {w^1} \&\& {u_1^1} \\
	{v_2^1} \\
	\&\& {w^2} \&\& {u_2^1} \\
	{v_3^1}
	\arrow["{{e_1^1}}"{pos=0.2}, from=1-1, to=2-3]
	\arrow["{e_1^2}"'{pos=0.2}, from=1-1, to=4-3]
	\arrow["{{f_1^1}}", from=2-3, to=2-5]
	\arrow["{{e_2^1}}"{pos=0.2}, from=3-1, to=2-3]
	\arrow["{e_2^2}"'{pos=0.2}, from=3-1, to=4-3]
	\arrow["{{f_2^1}}", from=4-3, to=4-5]
	\arrow["{{e_3^1}}"{pos=0.2}, from=5-1, to=2-3]
	\arrow["{e_3^2}"'{pos=0.2}, from=5-1, to=4-3]
\end{tikzcd}\]
and the lifted partitions are given by 
\begin{gather}
	\mathcal{L}^1 = \{\{e_1^1,e_2^1\},\{e_3^1\}\},\\
	\mathcal{L}^2 = \{\{e_1^2,e_2^2\},\{e_3^2\}\},\\
	\mathcal{R}^1 = \{\{e_1^1,e_2^1,e_3^1\},\{\emptyset\}\},\\
	\mathcal{R}^2 = \{ \{e_1^2,e_2^2,e_3^2\},\{\emptyset\} \}.
\end{gather}
By applying in-splits at these partitions we get the following graphs:
\[E_{\widehat{L}}:\begin{tikzcd}[ampersand replacement=\&]
	{v_1^{11}} \&\&\& {w^{11}} \\
	\&\&\& {w^{12}} \&\&\& {u_1^{11}} \\
	{v_2^{11}} \\
	\&\&\& {w^{21}} \&\&\& {u_2^{11}} \\
	{v_3^{11}} \&\&\& {w^{22}}
	\arrow["{{e_1^{11}}}"{pos=0.2}, from=1-1, to=1-4]
	\arrow["{e_1^{21}}"'{pos=0.2}, from=1-1, to=4-4]
	\arrow["{{f_1^{11}}}"{pos=0.8}, from=1-4, to=2-7]
	\arrow["{f_1^{12}}"'{pos=0.8}, from=2-4, to=2-7]
	\arrow["{{e_2^{11}}}"{pos=0.2}, from=3-1, to=1-4]
	\arrow["{e_2^{21}}"'{pos=0.2}, from=3-1, to=4-4]
	\arrow["{{f_2^{11}}}"{pos=0.8}, from=4-4, to=4-7]
	\arrow["{{e_3^{11}}}"{pos=0.2}, from=5-1, to=2-4]
	\arrow["{e_3^{21}}"'{pos=0.2}, from=5-1, to=5-4]
	\arrow["{f_2^{12}}"', from=5-4, to=4-7]
\end{tikzcd}\]
\[E_{\widehat{R}}:\begin{tikzcd}[ampersand replacement=\&]
	{v_1^{11}} \&\&\& {w^{11}} \\
	\&\&\& {w^{12}} \&\&\& {u_1^{11}} \\
	{v_2^{11}} \\
	\&\&\& {w^{21}} \&\&\& {u_2^{11}} \\
	{v_3^{11}} \&\&\& {w^{22}}
	\arrow["{{{e_1^{11}}}}"{pos=0.2}, from=1-1, to=1-4]
	\arrow["{{e_1^{21}}}"'{pos=0.2}, from=1-1, to=4-4]
	\arrow["{{{f_1^{11}}}}"{pos=0.8}, from=1-4, to=2-7]
	\arrow["{f_1^{12}}"'{pos=0.8}, from=2-4, to=2-7]
	\arrow["{{{e_2^{11}}}}"{pos=0.2}, from=3-1, to=1-4]
	\arrow["{{e_2^{21}}}"'{pos=0.2}, from=3-1, to=4-4]
	\arrow["{{{f_2^{11}}}}"{pos=0.8}, from=4-4, to=4-7]
	\arrow["{{{e_3^{11}}}}"{pos=0.2}, from=5-1, to=1-4]
	\arrow["{{e_3^{21}}}"'{pos=0.2}, from=5-1, to=4-4]
	\arrow["{f_2^{12}}"'{pos=0.8}, from=5-4, to=4-7]
\end{tikzcd}\]
Observe that above graphs differ from our initial $E_{\widehat{L}}$ and $E_{\widehat{R}}$ respectively by a relabeling that interchanges the positions of two superscripts. Recall, that the core idea behind the proof of Corollary~\ref{cor:in-splitplus} lies in two commutative diagrams, one on the level of inverse semigroups and one on the level of C*-algebras:
\begin{align}\label{ex:in-splitplusdiagram}
	\begin{tikzcd}[ampersand replacement=\&]
		{S(E_{\widehat{L}})} \&\& {S(E_{\widehat{R}})} \\
		\\
		{S(E_L)} \&\& {S(E_R)}
		\arrow["\widehat{\phi}", shift left, from=1-1, to=1-3]
		\arrow["{\psi_L}", from=1-1, to=3-1]
		\arrow["\widehat{\varphi}", shift left, from=1-3, to=1-1]
		\arrow["{\psi_R}", from=1-3, to=3-3]
	\end{tikzcd} && \begin{tikzcd}[ampersand replacement=\&]
		{C^*(E_{\widehat{L}})} \&\& {C^*(E_{\widehat{R}})} \\
		\\
		{C^*(E_L)} \&\& {C^*(E_R)}
		\arrow["{\widehat{\phi}_*}", shift left, from=1-1, to=1-3]
		\arrow["{(\psi_L)_*}", from=1-1, to=3-1]
		\arrow["{\widehat{\varphi}_*}", shift left, from=1-3, to=1-1]
		\arrow["{(\psi_R)_*}", from=1-3, to=3-3]
		\arrow["{\phi_*}", shift left, from=3-1, to=3-3]
		\arrow["{\varphi_*}", shift left, from=3-3, to=3-1]
	\end{tikzcd}
\end{align}
Where the upper-horizontal arrows $\widehat{\phi},\widehat{\varphi}$ come from Theorem~\ref{thm:in-splitplus}, the lower-horizontal arrows $\phi_*,\varphi_*$ come from Lemma~\ref{lemma:in-splitplus} and the vertical arrows $\psi_L,\psi_R$ come from Theorem~\ref{thm:maxoutsplit} as inverses to collapses of (possibly multiple) maximal out-splits. In this scenario we can write down all maps explicitly. Note that we keep the labels of $E_{\widehat{L}}$ and $E_{\widehat{R}}$ according to the labels given in $\eqref{ex:in-splitpluslabelL}$ and $\eqref{ex:in-splitpluslabelR}$ respectively. Since extended graph homomorphisms are defined solely by how they map vertices and edges, then we will restrict ourselves to only describing how our maps work these particular objects. Let us start with maps $\psi_L$ and $\psi_R$:
\begin{gather}
	\psi_L(v,v) = \begin{cases}
		(v_i^1,v_i^1) & v = v_i^{11},\\
		(u_i^1,u_i^1) & v = u_i^{11},\\
		(f_j^i,f_j^i) & v = w^{ij},
	\end{cases}\\
	\psi_L(e,t_{E_{\widehat{L}}}(e)) = \begin{cases}
		(f_i^j,t_{E_L}(f_i^j)) & e = f_i^{j1},\\
		(e_i^1f_j^1,f_j^1) & e = e_{i}^{1j}, i\neq 3,\\
		(e_i^1f_j^2,f_j^2) & e = e_3^{1j},
	\end{cases}\\
	\psi_R(v,v) = \begin{cases}
		(v_i^1,v_i^1) & v = v_i^{11},\\
		(u_i^1,u_i^1) & v = u_i^{11},\\
		(f_j^i,f_j^i) & v = w^{ij},
	\end{cases}\\
	\psi_R(e,t_{E_{\widehat{R}}}(e)) = \begin{cases}
		(f_i^j,t_{E_R}(f_i^j)) & e = f_i^{j1},\\
		(e_i^1,f_j^1,f_j^1) & e = e_i^{1j}.
	\end{cases}
\end{gather}
Next, we shall describe the map $\widehat{\phi}\colon S(E_{\widehat{L}})\to S(E_{\widehat{R}})$:
\begin{gather}
	\widehat{\phi}(v,v) = \begin{cases}
		(v,v) & v\notin\{w^{11},w^{21},w^{12},w^{22}\},\\
		(f_j^{i1},f_j^{i1}) & v = w^{ij},
	\end{cases}\\
	\widehat{\phi}(e,t_{E_{\widehat{L}}}) = \begin{cases}
		(e_i^{1j}f_j^{11},f_j^{11}) & e = e_i^{1j}, i\neq 3,\\
		(e_3^{1j}f_j^{11},f_j^{21}) & e = e_3^{1j},\\
		(e,t_{E_{\widehat{L}}}) & \text{otherwise.}
	\end{cases}
\end{gather}
Lastly, let us recall how $\phi_*\colon C^*(E_L)\to C^*(E_R)$, defined in Lemma~\ref{lemma:in-splitplus}, looks like in our case:
\begin{gather}
	\phi_*(S_v) = S_v, v\in E_L^0,\\
	\phi_*(S_e) = \begin{cases}
		S_{e_3^{1}}(S_{f_1^1}S_{f_1^2}^*+S_{f_2^1}S_{f_2^2}^*)  & e = e_3^1,\\
		S_e & \text{otherwise.}
	\end{cases}
\end{gather}
The only non-trivial thing to check in order see that the diagram of $C^*$-algebras $\eqref{ex:in-splitplusdiagram}$ is commutative. is to see how is 
$S_{e_3^{1j}}$ sent according to the above given maps. It is to see that we have
\[\begin{tikzcd}[ampersand replacement=\&]
	{S_{e_3^{1j}}} \&\& {S_{e_3^{j1}}S_{f_j^{11}}S_{f_j^{21}}^*} \\
	\\
	{S_{e_3^1}S_{f_j^2}S_{f_j^2}^*} \&\& {S_{e_3^1}S_{f_j^1}S_{f_j^1}}
	\arrow["{\widehat{\phi}_*}", maps to, from=1-1, to=1-3]
	\arrow["{(\psi_L)_*}"', maps to, from=1-1, to=3-1]
	\arrow["{(\psi_R)_*}", maps to, from=1-3, to=3-3]
\end{tikzcd}
\]
and now observe that
\begin{equation}
	\phi_*(S_{e_3^1}S_{f_j^2}S_{f_j^2}^*) = S_{e_3^1}(S_{f_1^1}S_{f_1^2}^*+S_{f_2^1}S_{f_2^2}^*)S_{f_j^2}S_{f_j^2}^* 
	= S_{e_3^1}S_{f_j^1}S_{f_j^2}^*.
\end{equation}
Thus everything works out.

\section*{Acknowledgements}
\noindent 
This research is part of the EU Staff Exchange project 101086394 \emph{Operator Algebras That One Can See}.
The project is co-financed by the Polish Ministry of Education and Science under the program PMW (grant agreement 5305/HE/2023/2).
 G.\ G.~de Castro was  
partially supported by Fundacao de Amparo a Pesquisa e Inovacao do Estado de Santa Catarina (FAPESC), Edital 21/2024 and Conselho Nacional de 
Desenvolvimento Científico e Tecnológico (CNPq) - Brazil. We are happy to thank Adam Dor-On, Roozbeh Hazrat,  and Tomasz Maszczyk for 
fruitful discussions.
\bibliographystyle{abbrv}
\bibliography{ref}

\end{document}